\documentclass[reqno,10pt]{amsart}
\usepackage{color} %%%% color package
\usepackage[left=1 in, right=1 in,top=1 in, bottom=1 in]{geometry}
\usepackage{amsfonts,amsmath,amsthm,amssymb,mathrsfs,environ}
\newtheorem{theorem}{Theorem}[section]

\newtheorem{lemma}[theorem]{Lemma}

\newtheorem{remark}[theorem]{Remark}

%\numberwithin{equation}{section}
\usepackage[pdfstartview=FitH,colorlinks=true,backref=page]{hyperref}
\hypersetup{urlcolor=red, linkcolor=blue,citecolor=red, CJKbookmarks=true}
\allowdisplaybreaks
\makeatletter

\newcommand{\bd}{\mathrm{d}}

\newcommand{\vdot}{v\!\cdot\!\nabla_x}
\newcommand{\sdot}{\!\cdot\!}
\newcommand{\m}{\dm}
\newcommand{\bdv}{\dm \bd v}
\newcommand{\dm}{\mathrm{M}}
\newcommand{\divg}{\mathrm{div}}
\newcommand{\Q}{\mathcal{Q}}
\newcommand{\timess}{\!\times\!}
\newcommand{\curl}{\nabla\!\!\times\!\!}

\newcommand{\phs}{\mathbb{T}^3\times\mathbb{R}^3}
\newcommand{\reps}{\tfrac{1}{\epsilon}}
\newcommand{\repsa}{\tfrac{\alpha_\epsilon}{\epsilon}}
\newcommand{\repsb}{\tfrac{\beta_\epsilon}{\epsilon}}
\newcommand{\ges}{\gamma_\epsilon}
\newcommand{\repst}{\tfrac{1}{\epsilon^2}}

\newcommand{\llv}{L^2_{v,\Lambda}}

\newcommand{\dt}{\frac{\mathrm{d}}{2 \mathrm{d}t}}
\newcommand*{\rom}[1]{\expandafter\@slowromancap\romannumeral #1@}
\newcommand{\weakc}{\rightharpoonup}
\newcommand{\bdp}{\mathcal{P}}

\newcommand{\intps}{\int_{\scriptstyle \mathbb{T}^3}\!\int_{\scriptstyle \mathbb{R}^3} }

\newcommand{\intv}{\int_{\mathbb{R}^3}}
\newcommand{\intt}{\int_{\mathbb{T}^3}}

\newcommand{\tdt}{\frac{\mathrm{d}}{ \mathrm{d}t}}
\makeatother
\NewEnviron{aligno}{%
\begin{align}\begin{aligned}
  \BODY
\end{aligned}\end{align}
}

\title[ VMB]{ The diffusive limits of two species  Vlasov-Maxwell-Boltzmann equations }

\author{Xu Zhang}
\address[X. Zhang]{\newline School of Mathematics and Statistics, zhengzhou University, Zhengzhou, 450001, P. R. China}
\email{xuzhang889@zzu.edu.cn}

\begin{document}
	\begin{abstract}
 In this work, we mainly concern the limiting behavior of the electromagnetic field of two species Vlasov-Maxwell-Botlzmann system in diffusive limits. As knudsen numbers goes to zero, the electric magnetic and magnetic field may perserve or vanish.  We   verify rigorously  Navier-Stokes, Navier-Stokes-Poisson and Navier-Stokes Maxwell limit of the two species Vlasov-Maxwell-Boltzmann system on the torus in three dimension.   The justification is based on  the unified and uniform estimates of solutions to the dimensionless Vlasov-Maxwell-Boltzmann.  The uniform estimates of solutions are obtained by employing the hypocoercivity of the linear Boltzmann operator and  constructing a equation containing damping term of electric field.
	\end{abstract}
\maketitle

\section{Introduction and Motivation}

The evolution of the dilute  gas consisting of  charged particles is described by  the Vlasov-Maxwell-Boltzmann (VMB) system which models the dynamics of charged particle  under  auto-induced electromagnetic field. There exists an alternative description for the gas: the fluid dynamics. These two ways are deeply connected. From the point {of} view of physics, the fluid regime is  those with small Knudsen numbers which is defined as the ratio of the mean free path of the molecular to the physical length scale.   
As the Knudsen number goes to zero, the gas under consideration satisfies continuum mechanics. Due to its electromagnetic field, the   hydrodynamic limits  in formal level are quite involved(see \cite{diogosrm-2019-vmb-fluid,jama2012siam} for instance). While  cations and anions are with the same mass(see \cite{jama2012siam} with different mass), D.~Ars\'{e}nio and L.~Saint-Raymond  performed a very systematic formal analysis in \cite{diogosrm-2019-vmb-fluid}.  From \cite[Sec.2.4]{diogosrm-2019-vmb-fluid}, the dimensionless two species Vlasov-Maxwell-Boltzman equations are
\begin{align}
\label{vmb-two-species}
\begin{cases}
\epsilon \partial_t   f^+_\epsilon + \vdot f^+_\epsilon +  (  \alpha_\epsilon E_\epsilon +  \beta_\epsilon v\timess B_\epsilon) \sdot\nabla_v f^+_\epsilon =  \reps \Q(f^+_\epsilon,f^+_\epsilon) + \reps \Q(f^+_\epsilon,f^-_\epsilon), \\
\epsilon \partial_t   f^-_\epsilon + \vdot f^-_\epsilon - (  \alpha_\epsilon E_\epsilon +  \beta_\epsilon v\timess B_\epsilon) \sdot\nabla_v f^-_\epsilon =  \reps \Q(f^-_\epsilon,f^+_\epsilon) + \reps \Q(f^-_\epsilon,f^-_\epsilon), \\
\gamma_\epsilon \partial_t   E_\epsilon - \curl   B_\epsilon = - \tfrac{\beta_\epsilon}{\epsilon^2} \intv (f_\epsilon^+ - f_\epsilon^-) v \bd v,\\
\gamma_\epsilon \partial_t   B_\epsilon + \curl   E_\epsilon =0,\\
\divg   B_\epsilon =0,~~\epsilon\sdot \divg   E_\epsilon =\intv (f^+_\epsilon - f^-_\epsilon) \bdv.
\end{cases}
\end{align}

where
\begin{itemize}
\item $\epsilon$: the knudsen number;
\item $\alpha_\epsilon$:  strength of the electric induction;
\item $\beta_\epsilon$:  strength of the magnetic induction;
\item $\gamma_\epsilon$: ratio of the bulk velocity to the speed of the light;
\item $E_\epsilon$: eletric field;
\item $B_\epsilon$: magnetic field;
\item $f_\epsilon^\pm$: number density of anion or cation.
\end{itemize}
Specially, $\alpha_\epsilon$, $\beta_\epsilon$ and $\gamma_\epsilon$ satisfy the following relations:
\begin{align}
\label{relationabg}
\alpha_\epsilon \gamma_\epsilon = \epsilon\sdot \beta_\epsilon. 
\end{align}

$f_\epsilon^\pm=f_\epsilon^\pm(t,x,v)$   denotes the density of the charges with velocity $v$ ($v\in\mathbb{R}^3$)  on position $x$ ($x\in \mathbb{T}^3$) at time $t$ ($t>0$). The charged particles are  moving under the Lorentz force (reprensented by the third term on the right hand of the first equation in \eqref{vmb-two-species}) and the inter-particle collisions ( modeled by $\Q(f,f)$ ) along their trajectories. The self-generated electromagnetic field is described by the Maxwell's equation (the equations from the second to the last in \eqref{vmb-two-species}). In sequence, they are   the Ampère's equation (the second, $E$ is the electric field),  Faraday's equation (the third, $B$ denotes the magnetic field) and the Gauss's law (the last).  $\divg B=0$ means that there is no magnetic monopole. For the two species equations, the notations can be understood in the similar way. The differences are that directions of the Lorentz force acting on ions and electrons are opposite.

The collission operator $\Q$  is  defined as follows:
\begin{equation}\label{collision-original}
\begin{split}
\Q(f,f)=&\int_{\mathbb{R}^3\times\mathbb{S}^{2}}(f'f'_*-ff_*) b(v-v_*,\omega) \bd v_*\bd\omega\,,
\end{split}
\end{equation}
where $f'=f(v')$, $f'_*= f(v'_*)$, $f_*= f(v_*)$.  Here, $v$ and $v_*$ denote the velocities of two particle before the   collision and $v'$ and $v_*'$ are their velocities after collision. The collision between particles  is elastic, i.e., satisfies the conservation law of momentum and energy:
\begin{align*}
\begin{cases}
v+ v_* = v' + v'_*, \\
|v|^2 + |v_*|^2 =    |v'|^2 + |v'_*|^2\, , 
\end{cases}
\end{align*}
and
\begin{align*}
\begin{cases}
v' = \frac{v + v_*}{2} + \frac{|v-v_*|}{2}\omega \\
v_*'= \frac{v + v_*}{2} - \frac{|v-v_*|}{2}\omega\,, ~~\omega =\frac{v' -  v_*'}{| v' - v_*'|} \in \mathbb{S}^2 ~~(\text{the unit sphere in } \mathbb{R}^3).
\end{cases}
\end{align*}

The nonnegative   $b(v-v_*, \omega)$, called {  cross-section}, is a function of    $|v-v_*|$  and cosine of the derivation angle $ (\frac{v-v_*}{|v-v_*|}, \omega)$.

For two species cases, it is more convenient to consider   sum and difference of $f_\epsilon^+$ and $f_\epsilon^-$. Denoting
\[ F_\epsilon = f^+_\epsilon + f^-_\epsilon, G_\epsilon= f^+_\epsilon - f^-_\epsilon,  \]
then we have
\begin{align}
\label{vmb-two-species-+-}
\begin{cases}
\epsilon\partial_t F_\epsilon + \vdot F_\epsilon +  ( \alpha_\epsilon E_\epsilon +  \beta_\epsilon v\timess B_\epsilon) \sdot\nabla_v G_\epsilon = \reps \Q(F_\epsilon,F_\epsilon) \\
\epsilon \partial_t G_\epsilon + \vdot G_\epsilon +  ( \alpha_\epsilon E_\epsilon + \beta_\epsilon v\timess B_\epsilon) \sdot\nabla_v F_\epsilon = \reps \Q(G_\epsilon,F_\epsilon)  \\
\gamma_\epsilon\partial_t E_\epsilon - \curl B_\epsilon = -  \tfrac{\beta_\epsilon}{\epsilon^2} \intv G_\epsilon v \bd x, \\
\gamma_\epsilon \partial_t B_\epsilon + \curl E_\epsilon =0,\\
\divg B_\epsilon =0,~~\divg E_\epsilon = \tfrac{\alpha_\epsilon}{\epsilon^2} \intv G_\epsilon \bd v.
\end{cases}
\end{align}
Plugging the following ansatz into \eqref{vmb-two-species-+-}
\[ F_\epsilon = \m ( 1 + \epsilon f_\epsilon ),~~ G_\epsilon = \epsilon \m g_\epsilon, \]
it follows that
\begin{align}
\label{vmbtwolinear}
\begin{cases}
\partial_t f_\epsilon +  \reps \vdot f_\epsilon  +   \repst \mathcal{L}(f_\epsilon)  = N_f, \\
\partial_t g_\epsilon +  \reps \vdot g_\epsilon   + \tfrac{\alpha_\epsilon}{\epsilon^2}  E_\epsilon \sdot v +   \repst \mathsf{L}(g_\epsilon)  = N_g, \\
\ges\partial_t E_\epsilon - \curl B_\epsilon = - \tfrac{\beta_\epsilon}{\epsilon} j_\epsilon, \\
\ges  \partial_t B_\epsilon + \curl E_\epsilon =0,\\
\divg B_\epsilon =0,~~\divg E_\epsilon = \tfrac{\alpha_\epsilon}{\epsilon} n_\epsilon,
\end{cases}
\end{align}
with
\begin{align*}
  j_\epsilon & = \intv g_\epsilon v \bdv,~n_\epsilon  = \intv g_\epsilon    \bdv,\\
 N_f & =  \tfrac{\alpha_\epsilon}{\epsilon} E_\epsilon\sdot v \sdot g_\epsilon -  ( \repsa E_\epsilon + \repsb v \times B_\epsilon) \sdot \nabla_v g_\epsilon + \reps \Gamma(f_\epsilon, f_\epsilon), \\
  N_g & =  \tfrac{\alpha_\epsilon}{\epsilon} E_\epsilon\sdot v \sdot f_\epsilon - (\repsa E_\epsilon +  \repsb v \times B_\epsilon) \sdot \nabla_v f_\epsilon + \reps \Gamma(g_\epsilon, f_\epsilon),\\
- \mathsf{L}(w)&  = (\m)^{-1}\Q(\m w, \m), ~-\mathcal{L}(w)  = (\m)^{-1}\left( \Q(\m w, \m) + \Q(\m, \m  w)\right).
\end{align*}

In this work, we concern the hydrodynamics limit, i.e.,  the transition from kinetic system to macroscopic fluid equations. The mathematical justification of the hydrodynamics limit for the Boltzmann type equations can be   divided into two sorts: the renormalized solution framework or classic solution framework. For the  work on the existence of renormalized solutions and fluid limit in renormalized solutions work, one can check \cite{bgl1993convergence,diperna-lions1989cauchy,gsrm2004,lm2010soft,lm2001acoustic,masmoudi-srm2003stokesfourier,mischler2010asens}. In this work, we only concern the classic solution framework. The existence of classic solution to VMB system can be found in \cite{dlyz2017cmp,guo-2003-vmb-invention,sr2006-vmb} and the references therein.  The rigorous verification work started from the Boltzmann equation and then was generalized to Vlasov-Poisson-Boltzmann or Vlasov-Maxwell-Boltzmann  system. For the Boltzmann equation, the limiting system is just Navier-Stokes system under diffusive limit. The justification work could  be found in \cite{bu1991,briant-2015-be-to-ns,guo2006NSlimit,ns-limit-2018} and the references therein. For the VPB system, according to \cite{diogosrm-2019-vmb-fluid}, the limiting fluids equation could be Navier-Stokes equation or Navier-Stokes-Poisson equation. For the diffusive limit of the Valsov-Poisson-Boltzmann equation, we refer to \cite{uvpb2020,jz2020vpbconvergence,vpb2020limit-spectrum,twovpblimits} and the references therein for the rigorous justification work.  

Due to the presence of the electromagnetic field, the structure of limiting fluid equations of VMB system is richer.  The electric or magnetic field may vanish as the Knudsen number tends to zero.  The Navier-Stokes-Poisson limit  of VMB system was verified in \cite{vmbtonsp} by Hilbert expansion method. The Navier-Stokes-Maxwell  limit   can be found in \cite{vmbtonswh} by Hilbert expansion method and in \cite{vmbtonswu} based on the uniform estimates with respect to the knudsen number. The authors in \cite{vmbtonsp,vmbtonswu,vmbtonswh} all consider the diffusive limit of the VMB system, but the electromagnetic field in the limiting equation are different. There only exists electric field in \cite{vmbtonsp}. Both electric field and magnetic field preserve in \cite{vmbtonswu,vmbtonswh}. The limit behavior of the electromagnetic field in VMB system is determined by the scalings  of the strength of electric induction $\alpha_\epsilon$ and magnetic induction $\beta_\epsilon$.  For details, denoting 
 \[ \alpha = \lim\limits_{\epsilon \to 0} \repsa,~~\beta = \lim\limits_{\epsilon \to 0} \beta_\epsilon,  \]
 then the type of limiting fluid equation are as follows
 \begin{aligno}
 \label{transisiter}
 &(1)~ \alpha >0,~\beta>0, ~\to \text{Electric field and magnetic field}~(see~ \eqref{nsw});\\
 &(2)~ \alpha >0,~\beta=0, ~\to \text{only electric field}~(see~ \eqref{nsp});\\
 &(3)~ \alpha =0,~\beta=0, ~\to \text{No electromagnetic field} ~(see~ \eqref{nsf}).
 \end{aligno}

 The goal of  this work is  to verify rigorously \eqref{transisiter}. We try to justify the role of $\alpha_\epsilon$ and $\beta_\epsilon$ in determining the limiting system. In other word, we shall verify the diffusive limit of two species Vlasov-Maxwell-Boltzmann for the three cases in \eqref{transisiter}.  Since there exist lots of cases for each one in \eqref{transisiter}, for simplicity,  we choose the following three main scalings  for example:
 \begin{aligno}
 \label{alphabg}
 &(1)~\alpha_\epsilon = \epsilon,~~\beta_\epsilon =1, ~\to \text{Navier-Stokes-Maxwell system};\\
 &(2)~ \alpha_\epsilon = \epsilon,~~\beta_\epsilon =\epsilon, ~\to \text{Navier-Stokes-Poisson system};\\
 &(3)~ \alpha_\epsilon = \epsilon^2,~~\beta_\epsilon =\epsilon^2, ~\to \text{Navier-Stokes system}.
 \end{aligno} 
Of course, our method can be generalized to more scalings (see Remark \ref{remark-scalings}). The justification of this transition phenomenon is based on  the uniform estimates of solutions $(f_\epsilon, g_\epsilon, B_\epsilon, E_\epsilon)$ with respect to the knudsen number. Since the local coercivity properties of the linear Boltzmann operator, the difficulty of obtaining the uniform estimates is how to get the dissipative energy of solutions, special for the macroscopic parts.  

The approach of this work is motivated by the method used in   \cite{briant-2015-be-to-ns,mouhotneumann-2006-decay} where the authors added a ``mixed term'' up to the instant energy to deduce the hypocoercivity  properties of the Boltzmann equations. By employing the similar strategy, we can get the dissipative energy(see \eqref{dissi} for the meanings) of $f_\epsilon$ and $g_\epsilon$.  Since the equations of $B_\epsilon$ and $E_\epsilon$ are hyperbolic, the key difficulty is how to obtain the dissipative estimates  of the eletromagnetic field for all the three cases in \eqref{alphabg} at the same time.   The strategy of dealing with this difficulty is to construct a equation with damping term of $E_\epsilon$. In details, multiplying the equation of $g_\epsilon$ by $\tilde{v}$(see \eqref{estab}) and then integrating over $\mathbb{R}^3$, we can obtain that
\begin{align}
\label{idea}
-   \partial_t \tilde{j}_\epsilon + \cdots + \sigma\tfrac{\alpha_\epsilon }{\epsilon^2} E_\epsilon = \cdots.
\end{align}
There exists kind of damping term   $E_\epsilon$ in the above equation. Based on ``mixture norm'', we can obtain the dissipative estimates of the curl part of $E_\epsilon$ first. After that, we can obtain the dissipative estimates of the curl part of $B_\epsilon$ by employing the Ampère and Faraday's equation. Then we can recover the dissipative estimates of electromagnetic field by the Helmholtz decomposition.  As for the three difference cases in \eqref{alphabg}, the first priority is to obtain the dissipative estimates of electromagnetic field related to $\alpha_\epsilon$ and $\beta_\epsilon$ (see the coefficient $\tfrac{\alpha_\epsilon^2}{\epsilon^2}$ in \eqref{esttheowhole}). After determining the coefficient $\tfrac{\alpha_\epsilon^2}{\epsilon^2}$, the inequality can be closed  with the help of \eqref{relationabg} which is very natural and play a essential role in obtaining the uniform estimates.

The novelty of this work is twofold.  One is that we  verify the diffusive limits for the three main scalings \eqref{alphabg} at the same time. We also give a rigorous proof for the formal derivation in \cite{diogosrm-2019-vmb-fluid} (only for two species with strong interaction).   On the other hand, the method of this work is based on the uniform estimates obtained by                                ``mixed norm'' introduced in \cite{mouhotneumann-2006-decay,briant-2015-be-to-ns}. Specially,   as mentioned before, we employ a new way of obtaining the dissipative estimates of electromagnetic field.  Indeed,  for instance(\cite{vmbtonswu}), noticing that the magnetic field is divergence free, by  employing the third and forth equation in \eqref{vmbtwolinear}, one can obtain the dissipative energy estimates by employing the wave equation of $B_\epsilon$. Besides, based on the micro-macro decomposition, the norm used in \cite{guo-2003-vmb-invention} must contain the temporal derivative to obtain the dissipative estimate of electromagnetic field( see \cite[Lemma 9]{guo-2003-vmb-invention} for more details). We also give an alternative proof for the existence of classic solutions to the 	VMB system and the three limiting fluid systems.

The rest of this paper is made up of three sections. We shall introduce the preliminaries  in the Section \ref{sec-settings-main}, such as notations, assumption on kernel and initial data. The main results can be found in Sec \ref{sec-results} where we also explain our strategy and difficulties. The Section \ref{secEstimates} consists in deriving the uniform a prior  estimates and is the most important part of this work. Based on the uniform estimates for all the three cases in \eqref{alphabg}, the fluid limits will be verified in Sec. \ref{sec-limit}.

\section{Preliminaries}
\label{sec-settings-main}

\subsection{Notations and Terms}
For estimates like this
\begin{align}
\label{dissi}
\tdt E(t)  + D(t) \le \cdots,
\end{align}  
where $E(t)$ and $D(t)$ are positive functions of $t$. We call $E(t)$ by the instant energy and $D(t)$ the dissipative energy (estimates). $\nabla^i_x  f = \nabla_{x_1}^{i_1} \nabla_{x_2}^{i_2} \nabla_{x_3}^{i_3} f$  is   the $i$-$th$ derivative  of $f$ with $i_1 + i_2 + i_3 = i$. Specially, $\nabla_x f$ is the gradient of scalar function $f$.   In the similar way, we can define $\nabla_v^i f$ and $\nabla_v f$.  The norms of $f$ are defined as follows:
\begin{align*}
&\|f\|_{L^2_v}^2 =  \intv f^2 \bdv ,   ~~\|f\|_{L^2}^2 =  \intps f^2 \bdv \bd x,\\
& \|f\|_{H^s_x}^2 =  \sum\limits_{k=0}^s\|\nabla^k_x f\|_{L^2}^2,  \|f\|_{H^s}^2 =  \sum\limits_{k=0}^s \sum\limits_{i + j =k}\|\nabla^i_x \nabla^j_v f\|_{L^2}^2.
\end{align*} 
Denoting $\hat{v}^2= 1 + |v|$,   the norms with weight on $v$ are defined as follows: 
\begin{align*}
& \|f\|_{L^2_\Lambda}^2 = \|f\hat{v}\|_{L^2}^2,~ \|f\|_{H^s_{\Lambda_x}}^2 =  \sum\limits_{k=0}^s\|\nabla^k_x f {\hat{v}}\|_{L^2}^2,\\
& \|f\|_{\llv}^2 =  \intv f^2 \hat{v}^2\bdv
,~~\|f\|_{H^s_\Lambda}^2 =  \sum\limits_{k=0}^s \sum\limits_{i + j =k}\|\nabla^i_x \nabla^j_v f\|_{L^2_\Lambda}^2.
\end{align*} 
In this work, the $C$ denotes some constant independent of $\epsilon$ and is different from lines to lines. 
The $a \lesssim b$ means that there exists some constant $C$ independent of $\epsilon$ such that $a \le C b$.

\subsection{Limiting equations}
\label{subsec-ns}

For the convenience of stating our main results, we list three fluid equations.
The first one is the Navier-Stokes-Maxwell system where exists Lorentz force(both electric and magnetic field):
\begin{align}
\label{nsw}
\text{(NSW)}~
\begin{cases}
\partial_t u + u\sdot \nabla u - \nu \Delta u + \nabla P =   n \sdot E +     j \timess B,\\
\partial_t \theta + u \sdot\nabla \theta - \kappa \Delta \theta =0,\\
\divg u = 0,~~ \rho + \theta =0,\\
\partial_t E - \curl B = -   j,\\
\partial_t B + \curl E =0,\\
\divg B =0, ~~\divg E =  n,\\
j= \sigma (   E  +  u\timess B) -  \sigma\nabla_x n + n u.
\end{cases}
\end{align}
While there is only Coulomb force, we obtain the Naiver-Stokes-Poisson system: 
\begin{align}
\label{nsp}
(\text{NSP})
\begin{cases}
\partial_t u + u\sdot \nabla u - \nu \Delta u + \nabla P =   n \sdot E,\\
\partial_t \theta + u \sdot\nabla \theta - \kappa \Delta \theta =0,\\
\partial_t n + \divg(nu) -  {\sigma} \Delta n +  {\sigma} n =0,\\
\divg u = 0,~~ \rho + \theta =0,\\
\curl E =0, ~~\divg E =  n.
\end{cases}
\end{align}
The last case is two fluid Navier-Stokes system where the electromagnetic field vanishes:
\begin{align}
\label{nsf}
(\text{NSF})
\begin{cases}
\partial_t u + u\sdot \nabla u - \nu \Delta u + \nabla P = 0,\\
\partial_t \theta + u \sdot\nabla \theta - \kappa \Delta \theta =0,\\
\partial_t n + u\sdot\nabla n - \sigma\Delta n  =0,\\
\divg u = 0,~~ \rho + \theta =0.
\end{cases}
\end{align}
In the above system, the $\nu,~\kappa,~\sigma$ are strictly positive constants and will be clearly defined in \eqref{nusigma}.

\subsection{Assumption on the linear operators}
\label{sec-assump-on-L}
This section consists in stating the assumption on the linear Boltzmann operators: $\mathcal{L}$ and $\mathsf{L}$. According to our notations,  by simple computation, we can infer
\begin{align*}
\mathsf{L}(h)& =  (\m)^{-1}\Q(\m h, \m) \\
& = \int_{\mathbb{R}^3\times\mathbb{S}^{2}}(h'-h)\m(v_*) b(v-v_*,\omega) \bd v_*\bd\omega\\
& = -\int_{\mathbb{R}^3\times\mathbb{S}^{2}}h'\m(v_*) b(v-v_*,\omega) \bd v_*\bd\omega + \int_{\mathbb{R}^3\times\mathbb{S}^{2}}\m(v_*) b(v-v_*,\omega) \bd v_*\bd\omega \sdot h \\
&:=-\mathbf{\Phi}(h) + \Lambda(v) \sdot h,
\end{align*}
and
\begin{align*}
\mathcal{L}(h) & = (\m)^{-1}\Q(\m h, \m)  + (\m)^{-1}\Q(\m , \m h)\\
& = \mathbf{\Phi}(h) - \Lambda(v) \sdot h + (\m)^{-1}\Q(\m , \m h)\\
&:= -\mathbf{K}(h) + \Lambda(v) \sdot h.
\end{align*} 
In summary, the operators satisfy
\begin{align} 
\label{assump-h1-h}
\mathcal{L} = -\mathbf{K} + \Lambda,~~ \mathsf{L} = -\mathbf{\Phi} + \Lambda.
\end{align}
From \cite{gsrm-2005-survery}, the operator $\Lambda(v)$ is coercive. Even though the compact parts of $\mathcal{L}$ and $\mathsf{L}$ are different, according to \cite{belm-2000-binary,guo-2003-vmb-invention,mouhot-2006-cpde,mouhot-2006-homogeneous},  the similar assumption can be established. To avoid using too many notations of constants and without loss of generality, we set the constant to be one.

\noindent {\bf H1.}   This assumption is on the coercive operator $\Lambda$ in \eqref{assump-h1-h}.  
\begin{itemize}
\item  {For any $h, g \in L^2_v$,  
\[  \|h\|_{\llv}^2 \le \intv \Lambda(h)\sdot h \bdv   \le C \|h\|_{\llv}^2, \] 
and}
\begin{align}
\vert\intv  \Lambda(h)\sdot g\bdv  \vert \le C \|h\|_{\llv}\|g\|_{\llv}.
\end{align}
\item With respect to the derivative of $v$, the operator $\Lambda$ admits ``a defect of coercivity'', i.e.,
there exist some strictly  positive  constant  $\delta$ and $C_\delta$   such that
\begin{aligno}
\label{constant-a3-a4}
\intv\nabla_{v} \Lambda(h)\sdot\nabla_{v} h\bdv   \geqslant ( 1 -\delta) \left\|\nabla_{v} h\right\|_{\llv}^{2}-C_\delta\|h\|_{L^{2}_v}^{2}, ~~0< \delta <1.
\end{aligno}
and for the higher order derivative, 
\begin{aligno}
\label{constant-a3-a4-hi}
\intps \nabla_{v}^i \nabla_x^j \Lambda(h)\sdot\nabla_{v}^i \nabla_x^j h\bdv \bd x \geqslant (1-\delta) \left\|\nabla_{v}^i \nabla_x^j h \right\|_{L^2_\Lambda}^{2}-C_\delta\|h\|_{H^{i+j-1}}^{2}.
\end{aligno}
\end{itemize}

\noindent {\bf H2.} This assumption is about the compact operators in \eqref{assump-h1-h}.  For any $\delta>0$, there exists some positive constant $C_\delta$ such that
\begin{aligno}
\label{constant-delta}
& \vert \intv \nabla_{v} \mathbf{K}(h) \sdot \nabla_{v} h\bdv  \vert +\vert \intv \nabla_{v} \mathbf{\Phi}(h) \sdot \nabla_{v} h\bdv  \vert  \leqslant C_\delta\|h\|_{L^{2}_v}^{2}+\delta\left\|\nabla_{v} h\right\|_{\llv}^{2},
\end{aligno}
and for higher order derivative,
\begin{aligno}
&\vert \intps \nabla_{v}^i \nabla^j_x  \mathbf{K}(h) \sdot \nabla_{v}^i \nabla^j_x h\bdv  \vert+\vert \intps \nabla_{v}^i \nabla^j_x  \mathbf{\Phi}(h) \sdot \nabla_{v}^i \nabla^j_x h\bdv  \vert \\
&\quad  \leqslant C_\delta\|h\|_{H^{i+j-1}}^{2}+\delta\left\|\nabla_{v}^i \nabla^j_xh\right\|_{L^2_\Lambda}^{2}.
\end{aligno}

\noindent {\bf H3.} (Relaxation to the local equilibrium.) This assumption is on the linear Boltzmann operators and their kernel spaces.
The linear Boltzmann operators $\mathcal{L}$  and   $\mathsf{L}$ are closed and self-adjoint operators from $L^2_v$ to $L^2_v$. 
 The kernel space of $\mathcal{L}$ is spanned by $1, v_1,v_2,v_3, |v|^2$.  The kernel space of $\mathsf{L}$ is spanned by $1$. 
Furthermore, $\mathcal{L}$ and $\mathsf{L}$ satisfy ``local coercivity assumption'': there exists $a_2>0$ such that
\begin{align}
\label{a2}
\begin{split}
&  \intv \mathcal{L}(g) \sdot g \bdv \ge  \|g - \bdp g\|_{\llv}^2, \\
&   \intv \mathsf{L}(h) \sdot h \bdv \ge  \|h - \bdp h\|_{\llv}^2, 
\end{split}
\end{align} 
where $\bdp$ is the projection operator of $\mathcal{L}$ and $\mathsf{L}$ onto their kernel space respectively, the macroscopic part. 
{In this work, while the projection $\bdp$ is related to $\mathcal{L}$, i.e., the first one in \eqref{a2}, 
\begin{aligno}
\bdp g = \intv g \bdv + v\sdot \intv g v \bdv + \tfrac{|v|^2 -3}{2} \intv g \tfrac{|v|^2 -3}{3} \bdv.
\end{aligno} 
If the projection $\bdp$ is related to $\mathsf{L}$, i.e., the second one in \eqref{a2}, 
\begin{aligno}
\bdp h = \intv h \bdv.
\end{aligno} 
}
In this work,  we use the same $\bdp$ for the projection operator of $\mathcal{L}$ and $\mathsf{L}$ onto their kernel space. 

Furthermore, we also assume that  
\begin{align}
\vert \intv f \sdot \mathcal{L}(g) \bdv \vert  \le C \|f\|_{\llv} \|g\|_{\llv},~~\vert \intv f \sdot \mathsf{L}(g) \bdv \vert  \le C \|f\|_{\llv} \|g\|_{\llv},~~\forall f,~g \in L^2_{\llv}.
\end{align}

\noindent {\bf H4.} This assumption is on $\Gamma(g,g)$ and $\Gamma(g,h)$. 
\begin{itemize}
\item  {For any $g, h\in L^2$,  $\Gamma(g,g) \in \mathrm{Ker}(\mathcal{L})^\perp$,~~ $\Gamma(g,h) \in \mathrm{Ker}(\mathsf{L})^\perp$.}
\item  {For the non-linear operator and $s \ge 3$  }
\begin{align}
\label{constant-cn}
\begin{split}
&\vert \intps \nabla^s_x \Gamma(g,h) \sdot f \bdv \bd x \vert \lesssim \|(g,h)\|_{H^s_{x}} \|(g,h)\|_{H^s_{\Lambda_x}}\|f^\perp\|_{L^2_\Lambda},\\
&\vert \intps \nabla^j_x \nabla_v^i \Gamma(g,h) \sdot f \bdv \bd x \vert \lesssim \|(g,h)\|_{H^s} \|(g,h)\|_{H^s_{\Lambda }}\|f\|_{L^2_\Lambda},~~i\ge 1,~~ s = i+j.
\end{split}
\end{align}
\end{itemize}
Besides, we introduce matrices $A(v)$ and $B(v)$, vector $\tilde{v}$ as follows
\begin{aligno}
\label{estab}
A(v) = v \otimes v - \tfrac{|v|^2}{3}\mathbb{I},~~B(v) = v ( \tfrac{|v|^2}{2} - \tfrac{5}{2}), ~\mathcal{L}\hat{A}(v) = A(v),~~\mathcal{L}\hat{B}(v) = B(v),~~\mathsf{L}\tilde{v} = v.
\end{aligno}
and define
\begin{align}
\label{nusigma}
 \nu = \tfrac{1}{15} \sum\limits_{ 1 \le i \le 3 \atop 1 \le j \le 3}\intv A_{ij}\hat{A}_{ij}\bdv,~~\kappa = \tfrac{2}{15} \sum\limits_{ 1 \le i \le 3  }\intv B_{i}\hat{B}_{i}\bdv,~~\sigma = \tfrac{1}{3}\intv \tilde{v}\sdot v \bdv.
\end{align} 
The matrix \eqref{estab} and constants \eqref{nusigma} will be very useful during the justification of fluid limits.

\subsection{Assumption on the initial data}
Denoting 
\begin{align*}
&\rho_\epsilon = \intv f_\epsilon \bdv,~ u_\epsilon = \intv f_\epsilon v\bdv,~\theta_\epsilon = \intv \left( \tfrac{|v|^2}{3} -1\right) f_\epsilon \bdv,~n_\epsilon = \intv g_\epsilon \bdv,
\end{align*}
similar to \cite{briant-2015-be-to-ns,guo-2003-vmb-invention}, by assuming the initial data have the same mass, velocity and total energy as the steady case, we can assume the initial data 
\begin{align}
\label{estMeaninitial}
\begin{split}
\intt \left(u_\epsilon + \gamma_\epsilon E_\epsilon \times B_\epsilon\right)(0) \bd x = 0,\\
  \intt \left(\theta_\epsilon +   \epsilon \sdot \tfrac{|E_\epsilon|^2 + |B_\epsilon|^2}{3} \right)(0) \bd x  =0,\\
    \intt \rho_\epsilon(0) \bd x = \intt n_\epsilon(0) \bd x =0,~~\intt B_\epsilon(0) \bd x= 0.
\end{split}
\end{align}

\section{ Main results}
\label{sec-results}

We  recall the three main scalings in \eqref{alphabg}. 
\begin{align}
\label{relationnsw}
\alpha_\epsilon = \epsilon, ~~ \gamma_\epsilon = \beta_\epsilon =1;\\
\label{relationnsp}
\alpha_\epsilon = \gamma_\epsilon = \beta_\epsilon =\epsilon;\\
\label{relationnsf}
\alpha_\epsilon =  \epsilon^2,~~\gamma_\epsilon = \epsilon,~~ \beta_\epsilon =\epsilon^2.
\end{align}

Define
\begin{align*}
\mathcal{H}_\epsilon^s(t) := \|(f_\epsilon, g_\epsilon, B_\epsilon, E_\epsilon)\|_{H^s_x}^2 + \epsilon^2\|(\nabla_v f_\epsilon, \nabla_v g_\epsilon)\|_{H^{s-1}}^2.
\end{align*}

\begin{theorem}
\label{theoremexi}
Under the assumption  in the section \ref{sec-assump-on-L} and the assumption \eqref{estMeaninitial} on the initial data,  for all the three scalings  (\ref{relationnsw}-\ref{relationnsf}) on $\alpha_\epsilon,~\beta_\epsilon$ and $\gamma_\epsilon$,   there exists some small enough constant $c_0$  such that for any $ 0 < \epsilon \le 1$ if  the initial data $(f_\epsilon(0), g_\epsilon(0), E_\epsilon(0), B_\epsilon(0))$ satisfy  
\[{H}_\epsilon^s(0) \le c_0,~~s \ge 3,  \]
then system \eqref{vmbtwolinear} admit a unique global classic solution $(f_\epsilon, g_\epsilon, B_\epsilon, E_\epsilon)$  satisfying for any $t>0$
\begin{align}
\label{esttheowhole}
\sup\limits_{ 0 \le s \le t}  {H}_\epsilon^s(t) +  \tfrac{1}{4} \int_0^t \left(   \|(   f_\epsilon,    g_\epsilon)\|_{H^{s}_\Lambda}^2 +  \tfrac{\alpha_\epsilon^2}{\epsilon^2} \|(E_\epsilon, B_\epsilon)\|_{H^{s-1}_{ x}}^2 +  \repst \|(   f_\epsilon^\perp,    g_\epsilon^\perp)\|_{H^{s}_{\Lambda_x}}^2 \right)(s)\bd s \le \tfrac {c_u}{c_l}  {H}_\epsilon^s(0),
\end{align}
where $c_l$ and $c_u$ are positive constants only dependent of the Sobolev embedding constant. 
\end{theorem}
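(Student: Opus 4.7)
The plan is to establish \eqref{esttheowhole} as an \emph{a priori} bound; global existence then follows from a standard local well-posedness statement combined with a continuation argument driven by the smallness of $H_\epsilon^s(0) \le c_0$. The target is a differential inequality
\[
\tdt \mathcal{E}_\epsilon^s(t) + \mathcal{D}_\epsilon^s(t) \lesssim \sqrt{\mathcal{E}_\epsilon^s(t)}\,\mathcal{D}_\epsilon^s(t),
\]
where $\mathcal{E}_\epsilon^s \simeq H_\epsilon^s$ and $\mathcal{D}_\epsilon^s$ is the dissipation appearing in \eqref{esttheowhole}. Smallness of $c_0$ then absorbs the cubic right-hand side into $\mathcal{D}_\epsilon^s$. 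The energy $\mathcal{E}_\epsilon^s$ will consist of the naive $L^2$ norms of $\nabla_x^k\nabla_v^j(f_\epsilon, g_\epsilon)$ and $\nabla_x^k(E_\epsilon, B_\epsilon)$, together with two classes of cross correctors --- one implementing hypocoercivity for the kinetic unknowns, the other generating damping for the electromagnetic unknowns.

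\textbf{Pure-$x$ estimates and hypocoercivity for $(f_\epsilon,g_\epsilon)$.} Applying $\nabla_x^k$ ($0\le k\le s$) to the first two equations of \eqref{vmbtwolinear} and pairing with $\nabla_x^k f_\epsilon$, $\nabla_x^k g_\epsilon$, the transport $\reps\vdot$ is skew-symmetric and vanishes, while assumption H3, in particular \eqref{a2}, yields the microscopic dissipation $\repst\|(f_\epsilon^\perp,g_\epsilon^\perp)\|_{H^s_{\Lambda_x}}^2$. For the $v$-derivative part of $H_\epsilon^s$, I differentiate also in $v$ and use \eqref{constant-a3-a4-hi}--\eqref{constant-delta} to extract $\|\nabla_v^i\nabla_x^j(f_\epsilon,g_\epsilon)\|_{L^2_\Lambda}^2$ coercivity modulo lower-order remainders. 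The transport commutator $\reps[\nabla_v^i,\vdot]=\reps\,\nabla_v^{i-1}\nabla_x^{j+1}$ is singular in $\epsilon$; following the Mouhot--Neumann--Briant mixed-norm scheme, I add to $\mathcal{E}_\epsilon^s$ a small corrector proportional to $\epsilon\,\intps\nabla_v^i\nabla_x^j h\cdot\nabla_v^{i-1}\nabla_x^{j+1}h\,\bd v\bd x$, whose time derivative reproduces the missing $\|\nabla_x^{s}(f_\epsilon,g_\epsilon)\|_{L^2}^2$ dissipation against a controlled loss of $\|\nabla_v\,\cdot\|_{L^2_\Lambda}^2$. Choosing the corrector coefficient small keeps $\mathcal{E}_\epsilon^s$ equivalent to $H_\epsilon^s$.

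\textbf{Electromagnetic dissipation --- the main new step.} The crux of the proof is producing the $\tfrac{\alpha_\epsilon^2}{\epsilon^2}\|(E_\epsilon,B_\epsilon)\|_{H^{s-1}_x}^2$ contribution in $\mathcal{D}_\epsilon^s$. Following \eqref{idea}, I multiply the $g_\epsilon$ equation by $\tilde v$ from \eqref{estab} and integrate in $v$; using $\mathsf{L}\tilde v=v$ with self-adjointness of $\mathsf{L}$, and $\intv v\otimes\tilde v\,\bdv=\sigma\,\mathbb{I}$ from \eqref{nusigma}, I obtain an identity of the form
\[
\partial_t\tilde j_\epsilon + \reps\nabla_x\!\cdot\!\intv g_\epsilon\,v\otimes\tilde v\,\bdv + \tfrac{\alpha_\epsilon}{\epsilon^2}\sigma E_\epsilon + \repst j_\epsilon = \intv N_g\,\tilde v\,\bdv,
\]
featuring an explicit damping in $E_\epsilon$. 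Applying $\nabla_x^k$ ($0\le k\le s-1$) and pairing with $\alpha_\epsilon\nabla_x^k E_\epsilon$ yields the desired $\tfrac{\alpha_\epsilon^2}{\epsilon^2}\sigma\|\nabla_x^k E_\epsilon\|_{L^2}^2$. The $\alpha_\epsilon\partial_t\tilde j_\epsilon\cdot E_\epsilon$ contribution becomes $\tdt$ of a cross energy (to be added to $\mathcal{E}_\epsilon^s$) minus $\alpha_\epsilon\tilde j_\epsilon\cdot\partial_t E_\epsilon$; substituting Amp\`ere $\ges\partial_t E_\epsilon=\curl B_\epsilon-\repsb j_\epsilon$ and invoking the compatibility \eqref{relationabg} $\alpha_\epsilon\ges=\epsilon\beta_\epsilon$, the coefficient of $\tilde j_\epsilon\cdot j_\epsilon$ becomes exactly $\tfrac{\alpha_\epsilon^2}{\epsilon^2}$, absorbable by Cauchy--Schwarz against the same dissipation. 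The curl-free part of $E_\epsilon$ is recovered from Gauss's law $\divg E_\epsilon=\repsa n_\epsilon$ together with the microscopic dissipation of $n_\epsilon$; magnetic dissipation follows from Faraday $\ges\partial_t B_\epsilon+\curl E_\epsilon=0$, $\divg B_\epsilon=0$, and Helmholtz decomposition.

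\textbf{Nonlinear closure and uniformity across the scalings.} The hardest analytic point --- and the one that forces the compatibility \eqref{relationabg} into the argument --- is absorbing the cubic terms $\repsa E_\epsilon\sdot v\,g_\epsilon$, $\repsb(v\timess B_\epsilon)\sdot\nabla_v g_\epsilon$, and $\reps\Gamma(\cdot,\cdot)$, whose prefactors are singular in $\epsilon$, most severely in the Navier--Stokes--Maxwell scaling \eqref{relationnsw} where $\repsb=1/\epsilon$. The $\Gamma$-terms are controlled by assumption H4, yielding bounds of the type $\|(f_\epsilon,g_\epsilon)\|_{H^s}\|(f_\epsilon,g_\epsilon)\|_{H^s_\Lambda}\|(f_\epsilon^\perp,g_\epsilon^\perp)\|_{L^2_\Lambda}$, which absorb into $\repst\|(f^\perp,g^\perp)\|_{H^s_{\Lambda_x}}^2$ once $\sqrt{H_\epsilon^s(0)}\ll 1$. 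The magnetic Lorentz term is integrated by parts in $v$ using $\nabla_v\sdot(v\timess B_\epsilon)=0$, transferring the derivative off $g_\epsilon$; the resulting factor $\repsb\|B_\epsilon\|_{H^{s-1}_x}$ is absorbed into the $\tfrac{\alpha_\epsilon^2}{\epsilon^2}\|B_\epsilon\|^2$ dissipation precisely because $\tfrac{\beta_\epsilon^2}{\epsilon^2}=\tfrac{\alpha_\epsilon^2\ges^2}{\epsilon^4}\ll\tfrac{\alpha_\epsilon^2}{\epsilon^2}$ (for $\ges\le\epsilon$ in \eqref{relationnsp}--\eqref{relationnsf}) or directly matches it (in \eqref{relationnsw}). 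The relation \eqref{relationabg} is therefore what makes all three scalings \eqref{relationnsw}--\eqref{relationnsf} closable simultaneously by a single weighted energy. Summing all estimates, choosing the corrector weights and $c_0$ sufficiently small, and integrating in time gives \eqref{esttheowhole} with $c_u/c_l$ depending only on the Sobolev embedding constants.
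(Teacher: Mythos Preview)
Your overall architecture matches the paper's, but your treatment of the singular magnetic Lorentz term $\repsb (v\timess B_\epsilon)\sdot\nabla_v g_\epsilon$ in the pure-$x$ estimate has a genuine gap, precisely in the scaling \eqref{relationnsw} where $\repsb=\reps$. Your proposed mechanism --- integrate by parts in $v$ and absorb $\repsb\|B_\epsilon\|_{H^{s-1}_x}$ into the electromagnetic dissipation $\tfrac{\alpha_\epsilon^2}{\epsilon^2}\|B_\epsilon\|^2$ --- fails on two counts. First, integration by parts at top order $k=s$ moves the $v$-derivative onto $\nabla_x^s f_\epsilon$, producing $\nabla_v\nabla_x^s f_\epsilon$, which is one derivative beyond what either $\mathcal{E}_\epsilon^s$ or $\mathcal{D}_\epsilon^s$ controls. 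Second, your coefficient comparison is wrong in scaling \eqref{relationnsw}: there $\tfrac{\beta_\epsilon^2}{\epsilon^2}=\tfrac{1}{\epsilon^2}$ while $\tfrac{\alpha_\epsilon^2}{\epsilon^2}=1$, so the former does \emph{not} match or sit below the latter; your asserted identity $\tfrac{\beta_\epsilon^2}{\epsilon^2}=\tfrac{\alpha_\epsilon^2\ges^2}{\epsilon^4}$ with $\ges=1$ actually gives $\tfrac{\alpha_\epsilon^2}{\epsilon^4}\gg\tfrac{\alpha_\epsilon^2}{\epsilon^2}$.

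The paper closes this term by two structural observations you are missing. For the top-order piece (all $x$-derivatives on the kinetic factor), it tests the $f_\epsilon$- and $g_\epsilon$-equations \emph{simultaneously} and exploits that the Lorentz forcing swaps $f_\epsilon\leftrightarrow g_\epsilon$; a polarization reduces the sum to diagonal terms of the form $\intps (v\timess B_\epsilon)\sdot\nabla_v(\m h)\,h\,\bd v\bd x$, which vanish because $v\sdot(v\timess B_\epsilon)=0$ (see \eqref{estonlyxD2}). For the commutator pieces (at least one $\nabla_x$ on $B_\epsilon$), the paper uses the macro-micro decomposition: since $\bdp g_\epsilon=n_\epsilon$ is $v$-independent, $\nabla_v g_\epsilon=\nabla_v g_\epsilon^\perp$, so the singular $\reps$ prefactor is absorbed by the $\repst\|g_\epsilon^\perp\|_{H^s_{\Lambda_x}}^2$ dissipation rather than by any electromagnetic damping (see \eqref{estr222} and Remark~\ref{remarkr2}). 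You should also note that recovering the zero-order dissipation of $(\rho_\epsilon,u_\epsilon,\theta_\epsilon)$ via Poincar\'e requires the nontrivial global conservation laws of Lemma~\ref{lemmamean} (the conserved quantities are $u_\epsilon+\gamma_\epsilon E_\epsilon\timess B_\epsilon$ and $\theta_\epsilon+\tfrac{\epsilon}{3}(|E_\epsilon|^2+|B_\epsilon|^2)$, not $u_\epsilon$ and $\theta_\epsilon$ alone), which your sketch does not mention.
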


\begin{remark}
We  use a equivalent norm $\tilde{H}_\epsilon^s$ (\eqref{normtildeH}) instead of $H^s_\epsilon$ to obtain the prior estimate \eqref{esttheowhole}. The constants $c_l$ and $c_u$ come from the equivalent relation of $\tilde{H}_\epsilon^s$ and $H^s_\epsilon$, see \eqref{estclcu}.

\end{remark}

\begin{remark}
We comment here on the dissipative estimates of $B_\epsilon$ and $E_\epsilon$. On one hand, while initial data belongs to $H^s$ space, we only obtain the order $s\!-\!1$ dissipative estimates of $E_\epsilon$ and $B_\epsilon$. This can be understood like this: from \eqref{idea} and \eqref{est-tilde-v},
\[-  \partial_t \tilde{j}_\epsilon + \sigma\tfrac{\alpha_\epsilon}{\epsilon^2} E_\epsilon -\reps \divg  \intv \tilde{v}\otimes v g_\epsilon  \bdv   = \cdots,\]
there already exists one derivative for the third term on the left hand of the above equation.

Furthermore,   by the Helmholtz decomposition, we can split $E_\epsilon$ and $B_\epsilon$ to two parts: curl-free part and divergence-free part. The loss derivative only occurs in the process of obtaining    $\curl E_\epsilon$ and $\curl B_\epsilon$. Indeed, 
\[ \divg E_\epsilon = n_\epsilon, \divg B_\epsilon =0.  \]
For the Vlasov-Poisson-Boltzmann system, since the eletromagnetic field is gradient flow, there is no loss of derivative.
\end{remark}

\begin{remark}
There exists a coefficient $\frac{\alpha^2_\epsilon}{\epsilon^2}$ before the dissipative estimates of the electromagnetic field. Under the scalings  \eqref{relationnsf}, i.e., $\alpha_\epsilon = o(\epsilon)$, the dissipative estimates of $E_\epsilon$ is very weak. Due to the loss of derivative and the coefficient $\tfrac{\alpha_\epsilon^2}{\epsilon^2}$, we need to adjust the coefficent and derivative carefully to close the inequality.
\end{remark}

Before  stating the limiting, let $u_0, \theta_0, n_0, E_0, B_0 \in H^s_x$ and satisfy (up to a subsequence)
\[ \mathbf{P} u_\epsilon(0) \to u_0,~~\tfrac{3}{5}\theta_\epsilon(0) - \tfrac{2}{5}\rho_\epsilon(0)  \to \theta_0, n_\epsilon(0) \to n_0,~E_\epsilon(0) \to E_0,~B_\epsilon(0) \to B_0, \text{in}~~H^{s-1}_x.\]
where $\mathbf{P}$ is the Leray projector. 
\begin{theorem}[Fluid limit]
\label{theoremlimit}
Under the assumption  in the section \ref{sec-assump-on-L} and the assumption \eqref{estMeaninitial} on the initial data, for  the solutions $f_\epsilon,~g_\epsilon,~E_\epsilon,~B_\epsilon$ constructed in Theorem \ref{theoremexi}, it follows that  
\begin{aligno}
\label{esttheolimit}
f_\epsilon \to \rho + u \sdot v + \tfrac{|v|^2-3}{2} \theta, ~~ g_\epsilon \to n(t,x),~~\text{in}~~~L^2(0,+\infty);H^{s-1}_x),\\
E_\epsilon \to E,~~B_\epsilon \to B,~~\text{in}~~~L^2((0,T);H^{s-1}_x))( \text{for any}~~  T>0),
\end{aligno} 
with $\rho,~u,~\theta,~E,~B$ satisfying 
\begin{aligno}
\label{functionspace}
\rho,~u,~\theta,~n,~E,~B \in L^\infty((0,\infty);H^s_x).
\end{aligno}
and
\begin{itemize}
\item for case \eqref{relationnsw}, $\rho,~n,~ u,~ \theta,~E$ and $B$  are global classic solutions to  Navier-Stokes-Maxwell system with initial data $(u_0,\theta_0,B_0,E_0)$;
\item  for case \eqref{relationnsp}, $\rho,~u,~\theta,~n$ and $E$ are global classic solutions to Naiver-Stokes-Poisson system  with initial data $(u_0,\theta_0,n_0)$;
\item for case \eqref{relationnsf}, $\rho,~u,~\theta$ and $n$ are global classic solutions to two fluids Navier-Stokes-Fourier system with initial data $(u_0,\theta_0,n_0)$.
\end{itemize} 
Furthermore, for any $\tau>0$, we can infer
\begin{align}
\label{well}
\mathbf{P} u_\epsilon \to u,~~\tfrac{3}{5}\theta_\epsilon - \tfrac{2}{5}\rho_\epsilon \to \theta,~~\text{in},~~C([\tau, +\infty); H^{s-1}_x).
\end{align}

\end{theorem}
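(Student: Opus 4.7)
The uniform estimate \eqref{esttheowhole} from Theorem \ref{theoremexi} is the backbone of the proof. My plan is to pass to the limit $\epsilon \to 0$ by weak--strong compactness combined with a Bardos--Golse--Levermore style moment closure, carried out case by case for the three scalings \eqref{relationnsw}--\eqref{relationnsf}. From the $L^\infty_t H^s_x$ bound in \eqref{esttheowhole} I extract, along a subsequence, weak-$\ast$ limits $f, g, E, B$ of $(f_\epsilon, g_\epsilon, E_\epsilon, B_\epsilon)$. The integrated dissipation $\repst\int_0^t \|(f_\epsilon^\perp, g_\epsilon^\perp)\|_{H^s_{\Lambda_x}}^2$ being finite forces $f_\epsilon^\perp, g_\epsilon^\perp \to 0$ strongly in $L^2_t H^s_{\Lambda_x}$, so the weak limits lie in $\mathrm{Ker}\,\mathcal{L}$ and $\mathrm{Ker}\,\mathsf{L}$ respectively; writing $f = \rho + u\sdot v + \tfrac{|v|^2-3}{2}\theta$ and $g = n$, and taking the $1$- and $\tfrac{|v|^2-3}{2}$-moments of the $f_\epsilon$-equation in \eqref{vmbtwolinear}, the factor $\reps$ in front of the transport immediately produces the Boussinesq-type constraints $\divg u = 0$ and $\rho + \theta = 0$.

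To upgrade weak to strong convergence for the macroscopic variables I use the transport equations in \eqref{vmbtwolinear} to bound $\partial_t$ at one lower order in $x$; combined with the $L^\infty_t H^s_x$ bound, an Aubin--Lions argument yields strong compactness of $(\rho_\epsilon, u_\epsilon, \theta_\epsilon, n_\epsilon)$ in $L^2_{\mathrm{loc}}([0,\infty); H^{s-1}_x)$. The electromagnetic fields behave differently depending on the scaling: under \eqref{relationnsw} and \eqref{relationnsp} the coefficient $\tfrac{\alpha_\epsilon^2}{\epsilon^2}\ge 1$ makes $(E_\epsilon, B_\epsilon)$ directly compact in $L^2_tH^{s-1}_x$ via the dissipation in \eqref{esttheowhole}; under \eqref{relationnsf} the same coefficient forces $(E_\epsilon, B_\epsilon)\to 0$ strongly in $L^2_tH^{s-1}_x$, consistent with \eqref{nsf}. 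The curl-free acoustic parts of $u_\epsilon$ together with $\rho_\epsilon + \theta_\epsilon$ undergo fast oscillation at frequency $\reps$ and must be filtered out on $[\tau,\infty)$ for any $\tau>0$, producing \eqref{well}.

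The limiting fluid equations are identified by a Hilbert-style moment closure. Taking the $v$- and $\tfrac{|v|^2-5}{2}$-moments of the $f_\epsilon$-equation and writing the traceless stress and heat flux via the pseudo-inverses in \eqref{estab} gives $\intv A(v)\reps f_\epsilon^\perp\bdv \to -\nu\Delta u$ and $\intv B(v)\reps f_\epsilon^\perp\bdv \to -\kappa\Delta\theta$, with the constants in \eqref{nusigma}. For the $g_\epsilon$-equation the moment $\tilde v$ (using $\mathsf{L}\tilde v = v$) combined with the identity \eqref{idea} yields the generalized Ohm law $j = \sigma(E + u\timess B) - \sigma\nabla_x n + nu$ under \eqref{relationnsw}; under \eqref{relationnsp} the magnetic term drops, the convection-diffusion equation for $n$ in \eqref{nsp} appears, and $\curl E = 0$ follows from $\ges\partial_t B_\epsilon + \curl E_\epsilon = 0$ as $\ges=\epsilon\to 0$; under \eqref{relationnsf} both $E$ and $B$ drop out and one recovers \eqref{nsf}. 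The strong convergence of the macroscopic variables and of $(E_\epsilon, B_\epsilon)$ lets the quadratic terms $u\sdot\nabla u$, $nE$, $j\timess B$, $u\timess B$, $\divg(nu)$ pass to the limit.

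The main obstacle is the identification of the Ohm law in the NSW case: one must manipulate \eqref{idea} to express $\sigma\tfrac{\alpha_\epsilon}{\epsilon^2}E_\epsilon$ as a combination of $\partial_t\tilde j_\epsilon$, $\reps\divg\intv\tilde v\otimes v\,g_\epsilon\bdv$ and nonlinear remainders from $N_g$, and then pass carefully to the limit using the constraint \eqref{relationabg} and the strong convergence just established. A secondary obstacle is closing the nonlinear Vlasov terms $\repsa E_\epsilon\sdot\nabla_v g_\epsilon$ and $\repsb(v\timess B_\epsilon)\sdot\nabla_v f_\epsilon$, which after integration by parts in $v$ and use of \eqref{relationabg} produce the convective coupling terms in \eqref{nsw}--\eqref{nsp}; here strong $L^2_t H^{s-1}_x$ convergence of $(E_\epsilon, B_\epsilon)$ in the NSW and NSP regimes, or the vanishing of the whole electromagnetic contribution in the NSF regime, is essential. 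Uniqueness for the limiting systems at regularity $H^s_x$ with $s\ge 3$ is classical and rules out the subsequence extraction, yielding convergence of the full family and completing the proof.
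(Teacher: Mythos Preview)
Your overall strategy matches the paper's: weak compactness from \eqref{esttheowhole}, strong vanishing of $f_\epsilon^\perp,g_\epsilon^\perp$ forcing the limits into the kernels, BGL moment closure via $\hat A,\hat B$ for the stress and heat flux, and the $\tilde v$-moment identity \eqref{idea} for the Ohm law. The gap is in your handling of $(E_\epsilon,B_\epsilon)$.

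You claim that in cases \eqref{relationnsw}--\eqref{relationnsp} the coefficient $\tfrac{\alpha_\epsilon^2}{\epsilon^2}\ge 1$ in \eqref{esttheowhole} makes $(E_\epsilon,B_\epsilon)$ ``directly compact'' in $L^2_tH^{s-1}_x$. A uniform $L^2_tH^{s-1}_x$ bound is not compactness; one still needs time regularity. In case \eqref{relationnsw} this is available from Maxwell since $\gamma_\epsilon=1$: $\partial_t B_\epsilon=-\curl E_\epsilon\in L^\infty_tH^{s-1}_x$ and $\partial_t E_\epsilon=\curl B_\epsilon-\tfrac{1}{\epsilon} j_\epsilon\in L^2_tH^{s-1}_x$ (the current because $j_\epsilon=\intv g_\epsilon^\perp v\,\bdv$ and $\tfrac{1}{\epsilon} g_\epsilon^\perp$ is bounded in $L^2_tH^s_{\Lambda_x}$), so Aubin--Lions applies. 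More seriously, you assert that under \eqref{relationnsf} the coefficient $\tfrac{\alpha_\epsilon^2}{\epsilon^2}=\epsilon^2$ ``forces $(E_\epsilon,B_\epsilon)\to 0$''. This is backwards: a vanishing prefactor in a dissipation bound yields \emph{less} information, not more; the inequality $\epsilon^2\int_0^t\|(E_\epsilon,B_\epsilon)\|_{H^{s-1}_x}^2\,\bd s\le C$ is weaker than the $L^\infty_tH^s_x$ bound you already have and certainly does not drive the fields to zero. What actually kills the electromagnetic coupling in \eqref{nsf} is that the forcing terms in the momentum equation of \eqref{fluid-app} carry the coefficients $\tfrac{\alpha_\epsilon}{\epsilon}=\epsilon\to 0$ and $\tfrac{\beta_\epsilon}{\epsilon}=\epsilon\to 0$, while $n_\epsilon E_\epsilon$ and $j_\epsilon\times B_\epsilon$ remain bounded; analogously $\alpha=\beta=0$ removes the field terms from the limiting Ohm law, so the fields themselves need not vanish at all.
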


\begin{remark}
\label{remark-scalings}
The main result of Theorem \ref{theoremexi} and Theorem \ref{theoremlimit} also work for more general scalings.   \eqref{relationnsp} can be generalized to 
\[ \alpha_\epsilon = \epsilon,~~   \beta_\epsilon =o(1).  \]
\eqref{relationnsw} can be generalized to 
\[ \alpha_\epsilon = o(\epsilon),~~ \beta_\epsilon = o(1),~~\gamma_\epsilon = o(1),~~\gamma_\epsilon  \lesssim \repsa,  \]
where
\begin{align*}
o(1)=\{ z_\epsilon\vert z_\epsilon \to 0,~~\epsilon \to 0  \},~~o(\epsilon)=\{ z_\epsilon\vert \tfrac{z_\epsilon}{\epsilon} \to 0,~~\epsilon \to 0  \}.
\end{align*}
\end{remark}

\begin{remark}
During the justification of the fluid limit, the key point is to verify the Ohm’s law
\[\reps j_\epsilon \to j= \sigma (  \alpha E  + \beta u\timess B) -  \sigma\nabla_x n + n u,~~\text{in the distributional sense}.\]
\end{remark}

\begin{remark} Let $\tilde{u}_0, \tilde{\rho}_0$ and $\tilde{\theta}_0$ be the limits of $u_\epsilon(0),~\rho_\epsilon(0)$ and  $ \tilde{\theta}_\epsilon(0)$ in the distributional sense.
If the initial data are well-prepared, i.e.,
\[  \divg \tilde{u}_0 =0, ~~\tilde{\rho}_0 + \tilde{\theta}_0 =0,~~  \]
then \eqref{well}
can be improved to 
\begin{align*}
\mathbf{P} u_\epsilon \to u,~~\tfrac{3}{5}\theta_\epsilon - \tfrac{2}{5}\rho_\epsilon \to \theta,~~\text{in},~~C([0, +\infty); H^{s-1}_x).
\end{align*}
\end{remark}
\begin{remark}
The main results of this work can be generalized to more collisional collisional kernels such as Fokker–Planck and Landau kernel, see \cite[Apendix A]{briant-2015-be-to-ns} for details.

\end{remark}
\subsection{The strategy  and difficulty of the proof.}
We only sketch the proof of Theorem \ref{theoremexi}. The proof of Theorem \ref{theoremlimit} is based on the local conservation laws of VMB system. For the uniform estimates, the goal is to obtain inequality like this
\begin{align*}
\tdt E(t) + D(t) \le E(t) D(t),
\end{align*}
where
\begin{align*}
E(t) \approx \|(f_\epsilon, g_\epsilon, E_\epsilon, B_\epsilon)\|_{H^s_x}^2 + \epsilon^2 \|(\nabla_v f_\epsilon, \nabla_v g_\epsilon)\|_{H^{s-1}}^2,\\~~D(t) \approx \|(f_\epsilon, g_\epsilon, \reps f_\epsilon^\perp, \reps g_\epsilon^\perp)\|_{H^s_\Lambda}^2 +  \tfrac{\alpha_\epsilon^2}{\epsilon^2}\|(B_\epsilon, E_\epsilon)\|_{H^{s-1}_x}^2.
\end{align*}
In what follows, we take the first equation of \eqref{vmbtwolinear}
\begin{align*}
\partial_t f_\epsilon + \reps \vdot f_\epsilon + \repst \mathcal{L}(f_\epsilon) = \cdots
\end{align*}
to explain the ``mixture norm'' skills used in \cite{mouhotneumann-2006-decay} and \cite{briant-2015-be-to-ns}. Because of the local coercivity properties of $\mathcal{L}$ and $\mathsf{L}$, we can obtain 
\begin{align*}
z_1 \sdot \dt \|f_\epsilon\|_{H^1_x}^2 + z_1 \sdot \repst \|f_\epsilon^\perp\|_{H^1_{\Lambda_x}}^2 \le \cdots.
\end{align*}
For the mixture term $\intps \nabla_x f_\epsilon \sdot \nabla_v f_\epsilon \bdv \bd x$,
\begin{align}
\label{est-z2}
z_2 \sdot \epsilon \dt \intps \nabla_x f_\epsilon \sdot \nabla_v f_\epsilon \bdv \bd x + z_2 \sdot \|\nabla_x f_\epsilon\|_{L^2}^2 \le \cdots.
\end{align}
In the above inequality, there exists the dissipative estimate of $f_\epsilon$. The $L^2$ estimate of $f_\epsilon$ can be obtained by Poincare's inequality and the local conservation laws. Furthermore, we also need to estimate $\nabla_v f_\epsilon$
\begin{align*}
z_3 \sdot \epsilon^2 \dt \|\nabla_v f_\epsilon\|_{L^2}^2 + z_3 \sdot  \|\nabla_v f_\epsilon\|_{L^2_\Lambda}^2 \le \cdots.
\end{align*}
Choosing proper constants $z_1$, $z_2$ and $z_3$ to let the ``mixture norm" satisfy
\begin{align*}
z_1   \|f_\epsilon\|_{H^1_x}^2 + z_2 \sdot \epsilon \intps \nabla_x f_\epsilon \sdot \nabla_v f_\epsilon \bdv \bd x +  z_3 \sdot \epsilon^2   \|\nabla_v f_\epsilon\|_{L^2}^2 \approx \|f_\epsilon\|_{H^1_\epsilon}^2,
\end{align*}
one can obtain that
\begin{align*}
\tdt \|f_\epsilon\|_{H^1_\epsilon}^2 + \|f_\epsilon\|_{H^1_\epsilon}^2 \le \cdots.
\end{align*}
The Lemma \ref{lemmaonlyx}, Lemma \ref{lemmamacrox} and Lemma \ref{lemmav} consist in establishing the similar above inequalities for $f_\epsilon$ and $g_\epsilon$. From the previous analysis, the key point of \cite{mouhotneumann-2006-decay} and \cite{briant-2015-be-to-ns} is to obtain \eqref{est-z2} and then employ the 	Poincare's inequality to recover zero-order dissipative estimates solutions.   But there exist new difficulty for the dimensionless system \eqref{vmbtwolinear}. Indeed, for the  Boltzmann system, if the mean value of the initial datum is zero, then the solution preserves this properties. But for \eqref{vmbtwolinear}, from the global conservation laws (see Lemma \ref{lemmamean}),  the mean value of $f_\epsilon$ and $g_\epsilon$ may be not zero. The Poincare's inequality can not be directly used. The idea is to employ the global conservation law to overcome this difficulty. The second difficulty comes from the singular term with coefficient $\repsb$ in \eqref{vmbtwolinear}. Indeed, this term is hard to bound , while $\beta_\epsilon = O(1)$. This difficulty can be dealt with  by splitting $f_\epsilon$ and $g_\epsilon$ into macroscopic parts and microscopic parts(see Remark \ref{remarkr2}). 

The estimates of electromagnetic parts are new in this wok. For the eletromagnetic field, as mentioned before, multiplying the second equation by $\tilde{v}\m$ and integrating the resulting equation over $\mathbb{R}^3$, it follows that
\begin{align}
\label{est-tilde-v}
-  \partial_t \tilde{j}_\epsilon + \sigma\tfrac{\alpha_\epsilon}{\epsilon^2} E_\epsilon -\reps \divg  \intv \tilde{v}\otimes v g_\epsilon  \bdv   - \repst j_\epsilon = \cdots.
\end{align}
The key point is to multiply $\tilde{v}$ other than $v$.  One of the advantage is that  the very singular term $\repst j_\epsilon$ can be canceled by employ the equation of $E_\epsilon$ (see \eqref{estcurle-cancel}). Based on the above equation, one can obtain the dissipative estimates of $E_\epsilon$(see Lemma \ref{lemma-curl-e}). For the dissipative estimates of $B_\epsilon$, by the third and forth equations in \eqref{vmbtwolinear}, one can obtain that
\begin{align*}
-\gamma_\epsilon \tdt \intt E_\epsilon \sdot \curl B_\epsilon \bd x + \|\curl B_\epsilon\|_{L^2}^2 - \|\curl E_\epsilon\|_{L^2}^2 \le \cdots.
\end{align*}
Since we have obtain the dissipative estimates of $E_\epsilon$, one can finally obtain the dissipative of $B_\epsilon$ by the Helmholtz decomposition and Poincare's inequality.  

 With the uniform estimate at our disposal, the limiting equations can be deduced from the local conservation of laws of Vlasov-Maxwell-Boltzmann by employing the properties of the linear Boltzmann operator and the structure of Vlasov-Maxwell-Boltzmann system. 
\section{A prior estimates}
\label{secEstimates}
This section is devoted to proving the existence of solutions to \eqref{vmbtwolinear}, i.e., Theorem \ref{theoremexi}.  The key ingredient is the uniform prior estimate of solutions.  The proof is quite involved. We split the whole proof into four lemmas. 
\subsection{The analysis of global conservation laws}   The mean value of the macroscopic part of $f_\epsilon$ and $g_\epsilon$ is necessary to obtain the dissipative estimates of $g_\epsilon$ and $f_\epsilon$.
\begin{lemma}
\label{lemmamean}
The classic solution of system \eqref{vmbtwolinear} enjoy the following   conservation law:
\begin{align}
\label{estlemmamean}
\begin{split}
\tdt \intt \left(u_\epsilon + \gamma_\epsilon E_\epsilon \times B_\epsilon\right)(t) \bd x = 0,\\
\tdt \intt \left(\theta_\epsilon +   \epsilon \sdot \tfrac{|E_\epsilon|^2 + |B_\epsilon|^2}{3} \right)(t) \bd x  =0,\\
\tdt \intt \rho_\epsilon(t) \bd x =\tdt \intt n_\epsilon(t) \bd x =\tdt \intt B_\epsilon(t) \bd x =0.
\end{split}
\end{align}
\end{lemma}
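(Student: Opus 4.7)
The plan is to derive each identity in \eqref{estlemmamean} as a global consequence of local conservation laws for \eqref{vmbtwolinear}, obtained by testing the kinetic equations against appropriate moments in $v$, integrating over $\mathbb{T}^3 \times \mathbb{R}^3$, and combining with Maxwell's equations integrated over $\mathbb{T}^3$. The proof splits naturally into three tiers of increasing difficulty.

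The first tier comprises the conservation of $\int B_\epsilon\,dx$, $\int \rho_\epsilon\,dx$, and $\int n_\epsilon\,dx$. For $B_\epsilon$, integrate Faraday's equation $\gamma_\epsilon \partial_t B_\epsilon + \curl E_\epsilon = 0$ over $\mathbb{T}^3$: the curl integrates to zero by periodicity. For $\rho_\epsilon$ and $n_\epsilon$, test the first two equations of \eqref{vmbtwolinear} against $1$ and integrate. The transport terms vanish by periodicity; the linear collision contributions integrate to zero since $1 \in \mathrm{Ker}(\mathcal{L})\cap\mathrm{Ker}(\mathsf{L})$ together with self-adjointness (H3); the quadratic $\Gamma$ terms vanish by orthogonality to the kernel (H4); and the Lorentz-type pieces in $N_f, N_g$ integrate to zero after one integration by parts in $v$, since both $E_\epsilon$ (independent of $v$) and $v \times B_\epsilon$ are divergence-free in $v$.

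The second tier is the combined momentum identity. Testing the $f_\epsilon$ equation against $v$ and integrating, the kinetic side yields $\tdt \int u_\epsilon\,dx$ plus, after integration by parts in $v$ on the Lorentz part of $N_f$, a contribution proportional to $\tfrac{\beta_\epsilon}{\epsilon}\int j_\epsilon \times B_\epsilon\,dx$ together with electric terms controlled by $\tfrac{\alpha_\epsilon}{\epsilon}$. Separately, differentiating $\int \gamma_\epsilon E_\epsilon \times B_\epsilon\,dx$ in time and substituting Amp\`ere's and Faraday's equations, the quadratic fluxes $(\curl B_\epsilon)\times B_\epsilon$ and $E_\epsilon \times (\curl E_\epsilon)$ reduce, via $\divg B_\epsilon = 0$, Gauss's law, and periodicity, to the electric terms; the Amp\`ere source produces exactly $-\tfrac{\beta_\epsilon}{\epsilon}\int j_\epsilon \times B_\epsilon\,dx$. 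The Lorentz pieces cancel, and the electric pieces match via the scaling relation \eqref{relationabg}, $\alpha_\epsilon \gamma_\epsilon = \epsilon \beta_\epsilon$, yielding the asserted identity.

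The third tier, the energy identity, follows in parallel. Testing the $f_\epsilon$ equation against $\tfrac{|v|^2}{3} - 1$ and integrating, the collision and $\Gamma$ contributions drop out because $|v|^2 \in \mathrm{Ker}(\mathcal{L})$, and integration by parts in $v$ on the Lorentz part of $N_f$ (using $v \cdot (v \times B_\epsilon) = 0$) produces a single Joule-type term $\tfrac{2\alpha_\epsilon}{3\epsilon}\int E_\epsilon \cdot j_\epsilon\,dx$. Separately, the Poynting identity obtained from Amp\`ere and Faraday gives $\tfrac{\gamma_\epsilon}{2}\tdt \int (|E_\epsilon|^2 + |B_\epsilon|^2)\,dx = -\tfrac{\beta_\epsilon}{\epsilon}\int E_\epsilon \cdot j_\epsilon\,dx$; multiplying by $\tfrac{2\epsilon}{3\gamma_\epsilon}$ and once more invoking \eqref{relationabg} shows that $\tfrac{\epsilon}{3}\tdt \int(|E_\epsilon|^2 + |B_\epsilon|^2)\,dx$ exactly cancels the Joule term. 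I expect the main obstacle to be the coefficient bookkeeping: the consistency of the momentum and energy identities across all three scalings \eqref{relationnsw}--\eqref{relationnsf} is what ultimately fixes the form $\gamma_\epsilon E_\epsilon \times B_\epsilon$ and $\tfrac{\epsilon}{3}(|E_\epsilon|^2 + |B_\epsilon|^2)$ of the electromagnetic corrections, and everything is held together by \eqref{relationabg}; apart from this the computation is routine moment-taking together with the kernel properties (H3)--(H4).
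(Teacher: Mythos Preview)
Your proposal is correct and follows essentially the same route as the paper: test the kinetic equations against $1$, $v$, and $\tfrac{|v|^2-3}{3}$, combine respectively with Faraday, the time derivative of $\int E_\epsilon\times B_\epsilon$, and the Poynting identity, and close the momentum and energy laws using Gauss's law together with the scaling relation $\alpha_\epsilon\gamma_\epsilon=\epsilon\beta_\epsilon$. The paper carries out exactly these steps, with the vector identities $\divg(E\otimes E)=(\divg E)E+(E\!\cdot\!\nabla)E$ and $(\nabla\times E)\times E=(E\!\cdot\!\nabla)E-\tfrac12\nabla|E|^2$ made explicit for the momentum part.
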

\begin{proof}
Recalling that
\[ v \sdot (v\times B_\epsilon) =0, \]
then we can rewrite system \eqref{vmbtwolinear} as
\begin{align}
\label{vmbtwo-rewrite}
\begin{cases}
\partial_t f_\epsilon +  \reps \vdot f_\epsilon  -   \repst \mathcal{L}(f_\epsilon)  = -  \tfrac{\alpha_\epsilon E_\epsilon + \beta_\epsilon v\times B_\epsilon}{\m \epsilon} \sdot \nabla_v (\m g_\epsilon) + \reps\Gamma(f_\epsilon,f_\epsilon), \\
\partial_t g_\epsilon +  \reps \vdot g_\epsilon   - \tfrac{\alpha_\epsilon}{\epsilon^2}  E_\epsilon \sdot v -   \repst \mathsf{L}(g_\epsilon)  =  -  \tfrac{\alpha_\epsilon E_\epsilon + \beta_\epsilon v\times B_\epsilon}{\m \epsilon} \sdot \nabla_v (\m f_\epsilon) + \reps\Gamma(g_\epsilon,f_\epsilon), \\
\ges\partial_t E_\epsilon - \curl B_\epsilon = - \tfrac{\beta_\epsilon}{\epsilon} j_\epsilon, \\
\ges  \partial_t B_\epsilon + \curl E_\epsilon =0,\\
\divg B_\epsilon =0,~~\divg E_\epsilon = \tfrac{\alpha_\epsilon}{\epsilon} \intv g_\epsilon\bdv.
\end{cases}
\end{align}
Then from \eqref{vmbtwo-rewrite}, it follows that
\begin{align}
\label{estMeanrho-n}
\tdt \intt \rho_\epsilon \bd x =0,~~\tdt \intt n_\epsilon(t) \bd x =0.
\end{align}
The local conservation law of velocity is
\begin{aligno}
\label{estConservationlaw-u}
\tdt \intt u_\epsilon(t) \bd x & = -   \intps v \sdot \left( \tfrac{\alpha_\epsilon E_\epsilon + \beta_\epsilon v\times B_\epsilon}{\epsilon} \sdot \nabla_v (\m g_\epsilon) \right) \bd v \bd x \\
& = \intps  \left( \tfrac{\alpha_\epsilon E_\epsilon + \beta_\epsilon v\times B_\epsilon}{\epsilon} \sdot   (\m g_\epsilon) \right) \bd v \bd x \\
& = \intt \divg E_\epsilon \sdot  E_\epsilon \bd x + \repsb \intt j_\epsilon\times B_\epsilon \bd x.
\end{aligno}
By simple computation,
\begin{aligno}
\label{useful}
 \divg (E_\epsilon \otimes E_\epsilon) & = \divg E_\epsilon \sdot E_\epsilon + (E_\epsilon \sdot \nabla) E_\epsilon,\\
 (\nabla \times E_\epsilon)\times E_\epsilon & = (E_\epsilon \sdot \nabla) E_\epsilon - \tfrac{1}{2}\nabla |E_\epsilon|^2,
\end{aligno}
plugging \eqref{useful} into \eqref{estConservationlaw-u}, we can infer that
\begin{aligno}
\label{estConservationlaw-u-1}
\tdt \intt u_\epsilon(t) \bd x = \repsb \intt j_\epsilon\times B_\epsilon \bd x - \intt (E_\epsilon \sdot \nabla) E_\epsilon \bd x
\end{aligno}
Multiplying the third equation of \eqref{vmbtwo-rewrite} by $\times B_\epsilon$ and the forth equation by $\times E_\epsilon$ respectively, then it follows that
\begin{aligno}
\label{estConservationlaw-u-2}
\gamma_\epsilon \tdt \intt E_\epsilon\times B_\epsilon \bd x  - \intt (\nabla \times B_\epsilon) \times B_\epsilon  + (\nabla \times E_\epsilon) \times E_\epsilon \bd x = - \repsb \intt j_\epsilon \times B_\epsilon \bd x.
\end{aligno}
With the help of  \eqref{useful} and $\divg B_\epsilon =0$, we can infer that
\begin{aligno}
\label{estecurlb}
  (\nabla \times B_\epsilon) \times B_\epsilon  + (\nabla \times E_\epsilon) \times E_\epsilon  & = (E_\epsilon \sdot \nabla) E_\epsilon + (B_\epsilon \sdot \nabla) B_\epsilon - \tfrac{1}{2}\nabla \left(  |E_\epsilon|^2 + |B_\epsilon|^2\right)\\
  & = (E_\epsilon \sdot \nabla) E_\epsilon + (B_\epsilon \sdot \nabla) B_\epsilon - \tfrac{1}{2}\nabla \left(  |E_\epsilon|^2 + |B_\epsilon|^2\right) \\
  & = (E_\epsilon \sdot \nabla) E_\epsilon - \divg (B_\epsilon \otimes B_\epsilon)  - \tfrac{1}{2}\nabla \left(  |E_\epsilon|^2 + |B_\epsilon|^2\right).
\end{aligno}
Combining \eqref{estecurlb}, \eqref{estConservationlaw-u-1} and \eqref{estConservationlaw-u-2}, we can infer that
\begin{align}
\label{estMeanu}
\tdt \intt \left(u_\epsilon + \gamma_\epsilon E_\epsilon \times B_\epsilon\right)(t) \bd x = 0.
\end{align}
Multiplying the first equation of \eqref{vmbtwo-rewrite} by $\tfrac{|v|^2-3}{3}$ and then integrating over the phase space, we can infer that
\begin{aligno}
\label{estMeantheta-1}
\tdt \intt \theta_\epsilon(t) \bd x & = -   \intps \tfrac{|v|^2-3}{3}  \sdot \left( \tfrac{\alpha_\epsilon E_\epsilon + \beta_\epsilon v\times B_\epsilon}{\epsilon} \sdot \nabla_v (\m g_\epsilon) \right) \bd v \bd x \\
& = \tfrac{2}{3} \intps  \left( \tfrac{v \cdot (\alpha_\epsilon E_\epsilon + \beta_\epsilon v\times B_\epsilon)}{\epsilon}     (\m g_\epsilon) \right) \bd v \bd x \\
& = \tfrac{2 \beta_\epsilon }{3  \gamma_\epsilon} \intt E_\epsilon \sdot j_\epsilon \bd x.
\end{aligno}
By the similar trick of deducing \eqref{estConservationlaw-u-2}, we can infer that
\begin{aligno}
\label{estMeantheta-2}
\epsilon \dt \intt |E_\epsilon|^2 + |B_\epsilon|^2 \bd x   =  \tfrac{  \beta_\epsilon }{  \gamma_\epsilon} \intt E_\epsilon \sdot j_\epsilon \bd x.
\end{aligno}
Combining \eqref{estMeantheta-1} and \eqref{estMeantheta-2}, we can infer that
\begin{aligno}
\label{estMeanstheta}
\dt \intt \left(\theta_\epsilon +   \epsilon \sdot \tfrac{|E_\epsilon|^2 + |B_\epsilon|^2}{3} \right)(t) \bd x  =0.
\end{aligno}
\end{proof}

Since the proof of uniform estimates  is very involved, we split it into serval parts.

\subsection{The dissipative estimates of the microscopic part}

\begin{lemma}[only related to $\nabla_x^k$]
\label{lemmaonlyx}
Under the assumption  in the section \ref{sec-assump-on-L} and the assumption \eqref{estMeaninitial} on the initial data, if $(f_\epsilon, g_\epsilon, B_\epsilon, E_\epsilon)$ are solutions to \eqref{vmbtwolinear}, then 
\begin{aligno}
\label{estlemmaonlyx}
& \dt \|  (f_\epsilon, g_\epsilon, E_\epsilon, B_\epsilon)\|_{H^s_x}^2  + \tfrac{3}{4\epsilon^2}   \| ( f^\perp_\epsilon,  g^\perp_\epsilon)\|_{H^s_{\Lambda_x}}^2    \\
& \le   C \|(E_\epsilon, B_\epsilon, f_\epsilon, g_\epsilon)\|_{H^s_x}^2\|(f_\epsilon, g_\epsilon)\|_{H^s_\Lambda}^2.
\end{aligno}

\end{lemma}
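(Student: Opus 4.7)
My plan is to run a weighted energy estimate level by level. For each $0\le k\le s$, apply $\nabla_x^k$ to the five equations of \eqref{vmbtwolinear}, test the first two against $\nabla_x^k f_\epsilon$ and $\nabla_x^k g_\epsilon$ in $L^2(\phs)$, and test Amp\`ere's and Faraday's laws against $\nabla_x^k E_\epsilon$ and $\nabla_x^k B_\epsilon$ in $L^2(\mathbb{T}^3)$. The free transport $\reps\vdot$ integrates away by periodicity, and assumption H3 gives the dissipative lower bounds $\reps^2\|\nabla_x^k f_\epsilon^\perp\|_{L^2_\Lambda}^2$ and $\reps^2\|\nabla_x^k g_\epsilon^\perp\|_{L^2_\Lambda}^2$ from the collision operators. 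The $g_\epsilon$ equation additionally produces the Maxwell-kinetic coupling $\frac{\alpha_\epsilon}{\epsilon^2}\intt\nabla_x^k E_\epsilon\sdot\nabla_x^k j_\epsilon\,\bd x$ coming from the term $\frac{\alpha_\epsilon}{\epsilon^2}E_\epsilon\sdot v$.

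On the Maxwell side, summing the two tested equations makes the curl cross-terms cancel after an $x$-integration by parts, leaving
\[
\tfrac{\gamma_\epsilon}{2}\tdt\|\nabla_x^k(E_\epsilon,B_\epsilon)\|_{L^2}^2=-\tfrac{\beta_\epsilon}{\epsilon}\intt\nabla_x^k j_\epsilon\sdot\nabla_x^k E_\epsilon\,\bd x.
\]
Dividing by $\gamma_\epsilon$ and using the scaling constraint \eqref{relationabg}, which gives $\frac{\beta_\epsilon}{\epsilon\gamma_\epsilon}=\frac{\alpha_\epsilon}{\epsilon^2}$, the Maxwell right-hand side cancels the coupling term from $g_\epsilon$ exactly. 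Summing over $0\le k\le s$ therefore yields
\[
\tfrac{1}{2}\tdt\|(f_\epsilon,g_\epsilon,E_\epsilon,B_\epsilon)\|_{H^s_x}^2+\reps^2\|(f_\epsilon^\perp,g_\epsilon^\perp)\|_{H^s_{\Lambda_x}}^2\le\mathcal{R},
\]
where $\mathcal{R}$ collects the contributions of $\nabla_x^k N_f$ and $\nabla_x^k N_g$. It remains to bound $\mathcal{R}$ by the cubic-in-norm expression on the right of \eqref{estlemmaonlyx}, leaving $\tfrac{3}{4\epsilon^2}$ of the dissipation on the left.

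The $\reps\Gamma$ terms are controlled by H4, producing factors $\reps\|(f,g)\|_{H^s_x}\|(f,g)\|_{H^s_{\Lambda_x}}\|(f^\perp,g^\perp)\|_{L^2_\Lambda}$ that Young's inequality absorbs into $\tfrac{1}{4\epsilon^2}\|(f^\perp,g^\perp)\|_{H^s_{\Lambda_x}}^2$. The Lorentz drift with prefactor $\frac{\alpha_\epsilon}{\epsilon}$ is harmless because $\alpha_\epsilon/\epsilon\le 1$ in all three scalings, so combining with Sobolev $\|E_\epsilon\|_{L^\infty_x}\lesssim\|E_\epsilon\|_{H^s_x}$ (since $s\ge 3$) gives contributions of the claimed form $\|(E,B)\|_{H^s_x}\|(f,g)\|_{H^s_{\Lambda_x}}^2$.

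\textbf{Main obstacle.} The only genuinely singular piece is the magnetic drift $\frac{\beta_\epsilon}{\epsilon}(v\timess B_\epsilon)\sdot\nabla_v g_\epsilon$ inside $N_f$ (and its counterpart in $N_g$), whose prefactor is of order $1/\epsilon$ under scaling \eqref{relationnsw}. I exploit the micro-macro split: since $\mathrm{Ker}(\mathsf{L})=\mathrm{span}\{1\}$ is $v$-independent, $\nabla_v g_\epsilon=\nabla_v g_\epsilon^\perp$. Integrating by parts in $v$ (using $\mathrm{div}_v(v\timess B)=0$) transfers the velocity gradient onto $\nabla_x^k f_\epsilon$; splitting $\nabla_v\nabla_x^k f_\epsilon=\nabla_x^k u_\epsilon+v\sdot\nabla_x^k\theta_\epsilon+\nabla_v\nabla_x^k f_\epsilon^\perp$, the $v\theta_\epsilon$ piece is annihilated by $(v\timess B)\sdot v=0$, and the $\nabla_x^k u_\epsilon$ piece produces $\frac{\beta_\epsilon}{\epsilon}\|B\|_{H^s_x}\|f\|_{H^s_x}\|g^\perp\|_{H^s_{\Lambda_x}}$, Young-split into $\tfrac{1}{4\epsilon^2}\|g^\perp\|_{H^s_{\Lambda_x}}^2$ plus a quartic remainder $\beta_\epsilon^2\|B\|_{H^s_x}^2\|f\|_{H^s_x}^2$ (using $\beta_\epsilon\le 1$) matching \eqref{estlemmaonlyx}. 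The residual pairing between $g^\perp$ and $\nabla_v f^\perp$, after $L^\infty_x$ control of $B$, is bounded by $\reps\|B\|_{H^s_x}\|f^\perp\|_{H^s_{\Lambda_x}}\|g^\perp\|_{H^s_{\Lambda_x}}$ and absorbed by the remaining dissipation. The symmetric bounds for $N_g$ follow verbatim, completing the estimate.
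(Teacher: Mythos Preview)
Your overall architecture is correct and matches the paper: the $\nabla_x^k$ energy identity, the cancellation of the kinetic--Maxwell coupling via $\alpha_\epsilon\gamma_\epsilon=\epsilon\beta_\epsilon$, the use of H3 for the dissipation, and H4 for the $\Gamma$ terms. Your treatment of the commutator pieces (where at least one $\nabla_x$ hits $E_\epsilon$ or $B_\epsilon$) by micro--macro splitting is also what the paper does in its term $D_r$.

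There is, however, a genuine gap at the top order, in the piece where all of $\nabla_x^k$ (with $k=s$) falls on $g_\epsilon$ (resp.\ $f_\epsilon$) in the magnetic drift. After your integration by parts and micro--macro split, the ``residual pairing between $g^\perp$ and $\nabla_v f^\perp$'' is
\[
\tfrac{\beta_\epsilon}{\epsilon}\intps (v\timess B_\epsilon)\sdot \nabla_x^s g_\epsilon^\perp\,\nabla_v\nabla_x^s f_\epsilon^\perp\,\m\,\bd v\,\bd x,
\]
which involves $\nabla_v\nabla_x^s f_\epsilon^\perp$, a derivative of total order $s+1$. This is \emph{not} controlled by $\|f^\perp\|_{H^s_{\Lambda_x}}$ (only $x$-derivatives) nor even by $\|f\|_{H^s_\Lambda}$ (mixed derivatives up to order $s$). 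Your stated bound $\reps\|B\|_{H^s_x}\|f^\perp\|_{H^s_{\Lambda_x}}\|g^\perp\|_{H^s_{\Lambda_x}}$ therefore drops one velocity derivative, and there is nothing on either side of \eqref{estlemmaonlyx} that can absorb it. Integrating by parts back only shifts the problem to $\nabla_v\nabla_x^s g_\epsilon^\perp$. The phrase ``symmetric bounds for $N_g$ follow verbatim'' does not help: the analogous residual from $N_g$ is the \emph{same} integral, so the two add rather than cancel when estimated separately.

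The fix---and this is precisely the paper's maneuver for its term $D_2$---is to combine the two contributions \emph{before} any micro--macro split. Writing the sum of the magnetic pieces from $N_f\!\cdot\!\nabla_x^k f_\epsilon$ and $N_g\!\cdot\!\nabla_x^k g_\epsilon$ at $i=0$ gives
\[
-\tfrac{\beta_\epsilon}{\epsilon}\intps (v\timess B_\epsilon)\sdot\nabla_v\big(\nabla_x^k f_\epsilon\,\nabla_x^k g_\epsilon\big)\,\m\,\bd v\,\bd x,
\]
and a single integration by parts in $v$, together with $\divg_v(v\timess B_\epsilon)=0$ and $(v\timess B_\epsilon)\sdot v=0$, shows this vanishes identically. (The paper presents this via the polarization $(f+g)^2-f^2-g^2$; it is the same cancellation.) Once the $i=0$ piece is removed exactly, the remaining commutator pieces have $j\le s-1$ derivatives on $f_\epsilon,g_\epsilon$ in the drift, so $\nabla_v\nabla_x^j$ stays at total order $\le s$ and your micro--macro argument goes through.
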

\begin{remark}
\label{remarkr2}
The term $ \repsb v \times B_\epsilon \sdot \nabla_v g_\epsilon$ brings new difficulties. Indeed, 
There already exists derivative with respect to $v$ in the Lorentz force. While $\beta_\epsilon = O(1)$, $\repsb = \reps$. The idea to deal with this difficulty is to decompose $f_\epsilon$ and $g_\epsilon$ into fluid part and microscopic part (see \eqref{estr222}).
\end{remark}
\begin{proof}
Applying $\nabla_x^k $ to the first four equations of \eqref{vmbtwo-rewrite} and then multiplying the resulting equations by $\nabla^k_x f_\epsilon \m $, $\nabla_x^k g_\epsilon \m $, $\nabla_x^k E_\epsilon \m$ and $\nabla_x^k B_\epsilon \m $, the integration over the phase space leads to
\begin{aligno}
\label{estx-0}
& \dt \|\nabla^k_x (f_\epsilon, g_\epsilon, E_\epsilon, B_\epsilon)\|_{L^2}^2  - \tfrac{\alpha_\epsilon}{\epsilon^2} \intt E_\epsilon \sdot j_\epsilon \bd x  +  \tfrac{\beta_\epsilon}{\epsilon \gamma_\epsilon} \intt E_\epsilon \sdot j_\epsilon \bd x \\
& - \repst  \intps  \left( \mathcal{L}(\nabla_x^k f_\epsilon)\sdot \nabla_x^k f_\epsilon    +  \mathsf{L}(\nabla_x^k g_\epsilon)\sdot \nabla_x^k g_\epsilon \right) \bdv \bd x\\
& =   -\intps \left( \nabla_x^k \left(    \tfrac{\alpha_\epsilon E_\epsilon + \beta_\epsilon v\times B_\epsilon}{\m \epsilon} \sdot \nabla_v ( \m g_\epsilon)\right) \nabla_x^k f_\epsilon  +  \nabla_x^k \left(\tfrac{\alpha_\epsilon E_\epsilon + \beta_\epsilon v\times B_\epsilon}{\m \epsilon} \sdot \nabla_v ( \m f_\epsilon) \right)\nabla_x^k g_\epsilon \right) \bdv \bd x \\
& + \reps \intps \left( \nabla_x^k   \Gamma(f_\epsilon,f_\epsilon)  \sdot \nabla_x^k f_\epsilon  +  \nabla_x^k \Gamma(g_\epsilon,f_\epsilon)  \sdot \nabla_x^k g_\epsilon \right)   \bdv \bd x \\
& := D_1 + D_n.
\end{aligno}
Recalling that $\alpha_\epsilon \gamma_\epsilon =  \epsilon \beta_\epsilon$ and by the assumptions on $\mathcal{L}$ and $\mathsf{L}$, we can infer that
\begin{aligno}
\label{estx-1}
& \dt \|\nabla^k_x (f_\epsilon, g_\epsilon, E_\epsilon, B_\epsilon)\|_{L^2}^2  + \repst \left( \|\nabla_x^k f^\perp\|_{L^2_\Lambda}^2 + \|\nabla_x^k g^\perp\|_{L^2_\Lambda}^2  \right) \le   D_1 + D_n.
\end{aligno}
Now, we try to estimate $D_1$ first. For $\beta=1$,   $\repsb = \epsilon^{-1}$ is unbounded while $\epsilon \to 0$. Since $\alpha \le \epsilon$, $\repsa \le 1$. Thus we only need to pay attention to the term with coefficient $\repsb$. Secondly, while $k=s$ and all the derivative acts on $g_\epsilon$ and $f_\epsilon$, that is to say,
\[ \intps \left(  \left(    \tfrac{\alpha_\epsilon E_\epsilon + \beta_\epsilon v\times B_\epsilon}{\m \epsilon} \sdot  ( \m \nabla_x^k \nabla_v g_\epsilon)\right) \nabla_x^k f_\epsilon  +    \left(\tfrac{\alpha_\epsilon E_\epsilon + \beta_\epsilon v\times B_\epsilon}{\m \epsilon} \sdot  ( \m \nabla_x^k \nabla_v f_\epsilon) \right)\nabla_x^k g_\epsilon \right) \bdv \bd x, \]
the above term can not be directly controlled. To overcome these difficulties, we first split
\begin{aligno}
\label{estonlyD1-0}
D_1 &  = \intps \left( \left(    \tfrac{\alpha_\epsilon E_\epsilon + \beta_\epsilon v\times B_\epsilon}{\m \epsilon} \sdot \nabla_v  \nabla_x^k ( \m g_\epsilon)\right) \nabla_x^k f_\epsilon    \right) \bdv \bd x \\
 & +  \intps \left(      \nabla_x^k \left(\tfrac{\alpha_\epsilon E_\epsilon + \beta_\epsilon v\times B_\epsilon}{\m \epsilon} \sdot \nabla_v  \nabla_x^k ( \m f_\epsilon) \right)\nabla_x^k g_\epsilon \right) \bdv \bd x + D_r \\
& := D_2 + D_r.
 \end{aligno}
$D_2$ is simple and can be bounded by integrating by parts over the phase space. Indeed, by simple computation,  we can conclude that
\begin{aligno}
\label{estonlyxD2}
D_2 &  = \intps   \left(    \tfrac{\alpha_\epsilon E_\epsilon + \beta_\epsilon v\times B_\epsilon}{  \epsilon} \sdot \nabla_x^k  \nabla_v ( \m (f_\epsilon + g_\epsilon)\right) \nabla_x^k (f_\epsilon  + g_\epsilon)  \bd v \bd x \\
 & - \intps    \left(    \tfrac{\alpha_\epsilon E_\epsilon + \beta_\epsilon v\times B_\epsilon}{  \epsilon} \sdot  \nabla_x^k \nabla_v ( \m  f_\epsilon  )\right) \nabla_x^k (f_\epsilon  )  \bd v \bd x \\
 & - \intps    \left(    \tfrac{\alpha_\epsilon E_\epsilon + \beta_\epsilon v\times B_\epsilon}{  \epsilon} \sdot \nabla_x^k \nabla_v ( \m  g_\epsilon  )\right) \nabla_x^k (g_\epsilon  )  \bd v \bd x\\
 & =  - \tfrac{1}{2}\intps   \left(    \tfrac{\alpha_\epsilon E_\epsilon + \beta_\epsilon v\times B_\epsilon}{  \epsilon} \sdot    v \sdot    \nabla_x^k (f_\epsilon + g_\epsilon)\right) \nabla_x^k (f_\epsilon  + g_\epsilon)  \bdv \bd x \\
 & + \tfrac{1}{2}\intps    \left(    \tfrac{\alpha_\epsilon E_\epsilon + \beta_\epsilon v\times B_\epsilon}{ \epsilon} \sdot    v \sdot   \nabla_x^k f_\epsilon  )\right) \nabla_x^k (f_\epsilon  )  \bdv \bd x \\
 & + \tfrac{1}{2} \intps    \left(    \tfrac{\alpha_\epsilon E_\epsilon + \beta_\epsilon v\times B_\epsilon}{\epsilon} \sdot   v (  \nabla_x^k g_\epsilon  )\right) \nabla_x^k (g_\epsilon  )  \bdv \bd x\\
 & = -   \tfrac{ \alpha_\epsilon}{  \epsilon}\intps         v \sdot  E_\epsilon \nabla_x^k f_\epsilon     \nabla_x^k (g_\epsilon  )  \bd v \bd x\\
 & \lesssim \repsa \|E_\epsilon\|_{H^s}\|f_\epsilon\|_{H^s_{\Lambda_x}} \|g_\epsilon\|_{H^s_{\Lambda_x}}
\end{aligno}
To deal with $D_r$,  \begin{aligno}
D_r &  = \sum\limits_{ i \ge 1 \atop i+j=k} \intps   \nabla_x^i\left(    \tfrac{\alpha_\epsilon E_\epsilon + \beta_\epsilon v\times B_\epsilon}{\m \epsilon}\right) \sdot \nabla_v  \nabla_x^j ( \m g_\epsilon) \nabla_x^k f_\epsilon      \bdv \bd x \\
 & + \sum\limits_{ i \ge 1 \atop i+j=k} \intps        \nabla_x^i \left(\tfrac{\alpha_\epsilon E_\epsilon + \beta_\epsilon v\times B_\epsilon}{\m \epsilon}\right) \sdot \nabla_v  \nabla_x^j ( \m f_\epsilon) \nabla_x^k g_\epsilon   \bdv \bd x\\
 & := D_{r1} + D_{r2}.
 \end{aligno}
We need to use the structure of $g_\epsilon$ to control $D_{r2}$. Indeed, noticing that
\begin{align*}
g_\epsilon(t,x,v) = n_\epsilon(t,x) + g_\epsilon^\perp(t,x,v),
\end{align*}
thus, we can infer that $\nabla_v \bdp g_\epsilon  =0$. With the help of this fact, we can infer that
\begin{aligno}
\label{estr222}
D_{r2} &  = \sum\limits_{ i \ge 1 \atop i+j=k} \intps        \nabla_x^i \left(\tfrac{\alpha_\epsilon E_\epsilon + \beta_\epsilon v\times B_\epsilon}{  \epsilon}\right) \sdot \nabla_v  \nabla_x^j ( \m f_\epsilon) \nabla_x^k g_\epsilon   \bd v \bd x \\
& = \sum\limits_{ i \ge 1 \atop i+j=k} \intps        \nabla_x^i \left(\tfrac{\alpha_\epsilon E_\epsilon + \beta_\epsilon v\times B_\epsilon}{ \epsilon}\right) \sdot \nabla_v  \nabla_x^j ( \m  f_\epsilon) \nabla_x^k g_\epsilon^\perp  \bd v \bd x \\
& + \sum\limits_{ i \ge 1 \atop i+j=k} \intps        \nabla_x^i \left(\tfrac{\alpha_\epsilon E_\epsilon + \beta_\epsilon v\times B_\epsilon}{  \epsilon}\right) \sdot \nabla_v  \nabla_x^j ( \m  f_\epsilon) \nabla_x^k \bdp g_\epsilon   \bd v \bd x \\
& = \sum\limits_{ i \ge 1 \atop i+j=k} \intps        \nabla_x^i \left(\tfrac{\alpha_\epsilon E_\epsilon + \beta_\epsilon v\times B_\epsilon}{ \epsilon}\right) \sdot \nabla_v  \nabla_x^j ( \m  f_\epsilon) \nabla_x^k g_\epsilon^\perp  \bd v \bd x\\
& = - \repsa\sum\limits_{ i \ge 1 \atop i+j=k} \intps     v\sdot   \nabla_x^i E_\epsilon      \nabla_x^j  f_\epsilon  \nabla_x^k g_\epsilon^\perp  \bdv \bd x \\
& + \repsa\sum\limits_{ i \ge 1 \atop i+j=k} \intps        \nabla_x^i E_\epsilon     \sdot \nabla_v \nabla_x^j  f_\epsilon \cdot \nabla_x^k g_\epsilon^\perp  \bdv \bd x \\
& + \repsb\sum\limits_{ i \ge 1 \atop i+j=k} \intps       v\times \nabla_x^i B_\epsilon     \sdot \nabla_v \nabla_x^j  f_\epsilon \cdot \nabla_x^k g_\epsilon^\perp  \bdv \bd x\\
& =D_{r21} + D_{r22} + D_{r23}.
\end{aligno}
The way of controling $D_{r21}$, $D_{r22}$ and $D_{r23}$ are similar. Here, we only take $D_{r23}$ for example.
\begin{align*}
D_{r23} & \le \repsb \sum\limits_{ i \ge 1 \atop i+j=k} \intt |\nabla_x^i B_\epsilon| \intv          |v|   |\nabla_v \nabla_x^j  f_\epsilon| \cdot |\nabla_x^k g_\epsilon^\perp|  \bdv \bd x \\
& \le  \repsb \sum\limits_{ i \ge 1 \atop i+j=k} \intt |\nabla_x^i B_\epsilon(t,x)|   \|\nabla_v \nabla_x^j  f_\epsilon(t,x)\|_{L^2_{\Lambda_v}}   \|  \nabla_x^k g_\epsilon^\perp(t,x)\|_{L^2_{\Lambda_v}} \bd x \\
&  \le \repsb \sum\limits_{ [\tfrac{s}{2}] \ge i \ge 1 \atop i+j=k} \intt |\nabla_x^i B_\epsilon(t,x)|   \|\nabla_v \nabla_x^j  f_\epsilon(t,x)\|_{L^2_{\Lambda_v}}   \|  \nabla_x^k g_\epsilon^\perp(t,x)\|_{L^2_{\Lambda_v}} \bd x \\
& + \repsb \sum\limits_{ [\tfrac{s}{2}] \le i \le s \atop i+j=k} \intt |\nabla_x^i B_\epsilon(t,x)|   \|\nabla_v \nabla_x^j  f_\epsilon(t,x)\|_{L^2_{\Lambda_v}}   \|  \nabla_x^k g_\epsilon^\perp(t,x)\|_{L^2_{\Lambda_v}} \bd x \\
& \le \repsb \| \|\nabla_v \nabla_x^j  f_\epsilon(t,x)\|_{L^2_{\Lambda_v}}\|_{L^\infty_x}  \sum\limits_{ [\tfrac{s}{2}] \le i \le s \atop i+j=k} \intt |\nabla_x^i B_\epsilon(t,x)|     \|  \nabla_x^k g_\epsilon^\perp(t,x)\|_{L^2_{\Lambda_v}} \bd x \\
& + \repsb \|\nabla_x^i B_\epsilon(t,x)\|_{L^\infty_x} \sum\limits_{ [\tfrac{s}{2}] \ge i \ge 1 \atop i+j=k} \intt    \|\nabla_v \nabla_x^j  f_\epsilon(t,x)\|_{L^2_{\Lambda_v}}   \|  \nabla_x^k g_\epsilon^\perp(t,x)\|_{L^2_{\Lambda_v}} \bd x.
\end{align*}
Then by the Sobolev embedding inequalities (for $x$) and H\"older inequality, we can infer
\begin{aligno}
\label{estonlyxD2r}
D_{r2} \le C \|(E_\epsilon, B_\epsilon)\|_{H^s_x}^2 \|f_\epsilon\|_{H^s_\Lambda}^2 + \tfrac{1}{8\epsilon^2} \|\nabla^k_x g_\epsilon^\perp\|_{L^2_{\Lambda}}^2.
\end{aligno}
The trick of controlling $D_{r1}$ is very complicated.  Except for employing the structure of $g_\epsilon$, we also need to split $f_\epsilon$ into macroscopic part and microscopic part. Indeed,
\begin{aligno}
\label{estonlyxD1r-1}
D_{r1} & =  \sum\limits_{ i \ge 1 \atop i+j=k} \intps   \nabla_x^i\left(    \tfrac{\alpha_\epsilon E_\epsilon + \beta_\epsilon v\times B_\epsilon}{  \epsilon}\right) \sdot \nabla_v  \nabla_x^j ( \m g_\epsilon) \nabla_x^k f_\epsilon^\perp     \bd v \bd x \\
& + \sum\limits_{ i \ge 1 \atop i+j=k} \intps   \nabla_x^i\left(    \tfrac{\alpha_\epsilon E_\epsilon + \beta_\epsilon v\times B_\epsilon}{  \epsilon}\right) \sdot \nabla_v  \nabla_x^j ( \m n_\epsilon) \nabla_x^k \bdp f_\epsilon     \bd v \bd x \\
& + \sum\limits_{ i \ge 1 \atop i+j=k} \intps   \nabla_x^i\left(    \tfrac{\alpha_\epsilon E_\epsilon + \beta_\epsilon v\times B_\epsilon}{  \epsilon}\right) \sdot \nabla_v  \nabla_x^j ( \m g_\epsilon^\perp) \nabla_x^k \bdp f_\epsilon     \bd v \bd x.
\end{aligno}
Except for the second term in the right hand of \eqref{estonlyxD1r-1}, the other two terms are easy to be bounded. Indeed, for the first term in \eqref{estonlyxD1r-1}, we can infer that
\begin{aligno}
\label{estonlyxD1r-2}
& \sum\limits_{ i \ge 1 \atop i+j=k} \intps   \nabla_x^i\left(    \tfrac{\alpha_\epsilon E_\epsilon + \beta_\epsilon v\times B_\epsilon}{  \epsilon}\right) \sdot \nabla_v  \nabla_x^j ( \m g_\epsilon) \nabla_x^k f_\epsilon^\perp     \bd v \bd x \\
& = -\repsa \sum\limits_{ i \ge 1 \atop i+j=k} \intps        v  \sdot \nabla_x^i E_\epsilon  \sdot  \nabla_x^j   g_\epsilon  \nabla_x^k f_\epsilon^\perp     \bdv \bd x \\
& + \repsa \sum\limits_{ i \ge 1 \atop i+j=k} \intps          \nabla_x^i E_\epsilon  \sdot \nabla_v \nabla_x^j   g_\epsilon  \nabla_x^k f_\epsilon^\perp     \bdv \bd x \\
& +  \repsb \sum\limits_{ i \ge 1 \atop i+j=k} \intps         v\times \nabla_x^i B_\epsilon  \sdot \nabla_v \nabla_x^j   g_\epsilon  \nabla_x^k f_\epsilon^\perp     \bdv \bd x \\
& \le  C \|(E_\epsilon, B_\epsilon)\|_{H^s_x}^2\|(f_\epsilon, g_\epsilon)\|_{H^s_\Lambda}^2 + \tfrac{1}{16\epsilon^2}\|\nabla_x^k f_\epsilon^\perp\|_{L^2_\Lambda}^2.
\end{aligno}
Similarly, we can infer that
\begin{aligno}
\label{estonlyxD1r-3}
\sum\limits_{ i \ge 1 \atop i+j=k} \intps   \nabla_x^i\left(    \tfrac{\alpha_\epsilon E_\epsilon + \beta_\epsilon v\times B_\epsilon}{  \epsilon}\right) \sdot \nabla_v  \nabla_x^j ( \m g_\epsilon^\perp) \nabla_x^k \bdp f_\epsilon     \bd v \bd x \\
 \le C \|(E_\epsilon, B_\epsilon)\|_{H^s_x}^2\|(f_\epsilon, g_\epsilon)\|_{H^s_\Lambda}^2 + \tfrac{1}{16\epsilon^2}\|\nabla_x^k f_\epsilon^\perp\|_{L^2_\Lambda}^2.
\end{aligno}
The second term in \eqref{estonlyxD1r-1} is more direct. Recalling that
\[\bdp f_\epsilon = \rho_\epsilon + u_\epsilon \sdot v +  \theta_\epsilon \tfrac{|v|^2-3}{2},~~~~ \nabla_v n_\epsilon =0,    \]
then it follows that
\begin{aligno}
\label{estonlyxD1r-4}
& \sum\limits_{ i \ge 1 \atop i+j=k} \intps   \nabla_x^i\left(    \tfrac{\alpha_\epsilon E_\epsilon + \beta_\epsilon v\times B_\epsilon}{  \epsilon}\right) \sdot \nabla_v  \nabla_x^j ( \m n_\epsilon) \nabla_x^k \bdp f_\epsilon     \bd v \bd x \\
& = - \repsa \sum\limits_{ i \ge 1 \atop i+j=k} \intps \left( v \sdot  \nabla_x^i E_\epsilon \right) \sdot   \nabla_x^j  n_\epsilon \nabla_x^k \bdp f_\epsilon     \bdv \bd x \\
& \le C \|(E_\epsilon, B_\epsilon, f_\epsilon, g_\epsilon)\|_{H^s_x}^2\|(f_\epsilon, g_\epsilon)\|_{H^s_x}^2.
\end{aligno}
With the help of \eqref{estonlyxD1r-1}, \eqref{estonlyxD1r-2}, \eqref{estonlyxD1r-3}, \eqref{estonlyxD1r-4} and \eqref{estonlyxD2r}, we can finally conclude
\begin{aligno}
\label{estonlyxD1r}
D_{1r} \le C \|(E_\epsilon, B_\epsilon, f_\epsilon, g_\epsilon)\|_{H^s_x}^2\|(f_\epsilon, g_\epsilon)\|_{H^s_\Lambda}^2 + \tfrac{1}{8\epsilon^2}\|\nabla_x^k f_\epsilon^\perp\|_{L^2_\Lambda}^2,
\end{aligno}
and
\begin{aligno}
\label{estonlyxDr}
D_{r} \le C \|(E_\epsilon, B_\epsilon, f_\epsilon, g_\epsilon)\|_{H^s_x}^2\|(f_\epsilon, g_\epsilon)\|_{H^s_\Lambda}^2 + \tfrac{1}{8\epsilon^2}\|\nabla_x^k (f_\epsilon^\perp, g_\epsilon^\perp)\|_{L^2_\Lambda}^2.
\end{aligno}
Finally, with the help of \eqref{estonlyxD2} and \eqref{estonlyxDr},  for the $D_1$ in \eqref{estx-1}, we can obtain that
\begin{align}
\label{estonlyxD1}
D_1 \le C \|(E_\epsilon, B_\epsilon, f_\epsilon, g_\epsilon)\|_{H^s_x}^2\|(f_\epsilon, g_\epsilon)\|_{H^s_\Lambda}^2 + \tfrac{1}{8\epsilon^2}\|\nabla_x^k (f_\epsilon^\perp, g_\epsilon^\perp)\|_{L^2_\Lambda}^2.
\end{align}
For the nonlinear collision operator ($D_n$ in \eqref{estx-1}),  by the assumption, we can infer that
\begin{aligno}
\label{estonlyxDn}
D_n & =  \reps \intps \left( \nabla_x^k   \Gamma(f_\epsilon,f_\epsilon)  \sdot \nabla_x^k f_\epsilon^\perp  +  \nabla_x^k \Gamma(g_\epsilon,f_\epsilon)  \sdot \nabla_x^k g_\epsilon^\perp \right)   \bdv \bd x \\
& \le C\|(f_\epsilon, g_\epsilon)\|_{H_x^s}\|(f_\epsilon, g_\epsilon)\|_{H_{\Lambda_x}^s}\|\nabla^k_x (f_\epsilon, g_\epsilon)\|_{L_{\Lambda}^2} \\
& \le C \|(f_\epsilon, g_\epsilon)\|_{H_x^s}^2\|(f_\epsilon, g_\epsilon)\|_{H_{\Lambda_x}^s}^2 +  \tfrac{1}{8\epsilon^2} \|\nabla^k_x (f_\epsilon, g_\epsilon)\|_{L_{\Lambda}^2}.
\end{aligno}
Combing \eqref{estx-0}, \eqref{estx-1}, \eqref{estonlyxD1} and \eqref{estonlyxDn}, we complete the proof of this lemma.

\end{proof}

  Lemma \ref{lemmaonlyx} is only related to the derivative of $f_\epsilon$ and $g_\epsilon$ with respect to $x$. The following lemma is devoted to the estimates of $\nabla_x^i \nabla_x^j g_\epsilon$ ($i \ge 1$).
Loosely speaking,   the goal of the following lemma is to obtain estimates like this
\begin{align*}
 \epsilon^2 \dt \| (\nabla_v f_\epsilon, \nabla_v g_\epsilon)\|_{H^{s-1}}^2 +\| (\nabla_v f_\epsilon, \nabla_v g_\epsilon)\|_{H^{s-1}_\Lambda}^2 \le \|(f_\epsilon,g_\epsilon, \alpha_\epsilon E_\epsilon)\|_{H^{s-1}_x}^2 +  \cdots,~~ s = i+ j, ~~i \ge 1.
\end{align*}
  Since there exists a coefficient $\epsilon^2$ before $\dt \|(\nabla_x^j \nabla^i_v f_\epsilon, \nabla_x^j \nabla^i_v g_\epsilon)\|_{L^2}^2$. Compared to Lemma \ref{lemmaonlyx},   it is simpler. Indeed, the singular term (the one with coefficient $\repsb$) is easily estimated.
   Before stating this lemma, we introduce the equivalent norm of $\dot{H}^k$
 \[ \dot{H}_{v,\epsilon}^{k}(t) =    8 \sum\limits_{ i \ge 1, j \ge 1 \atop i + j =k}\|( \nabla_v^i \nabla_x^j f_\epsilon,  \nabla_v^i \nabla_x^j g_\epsilon)\|_{L^2}^2  + \|( \nabla_v^k   f_\epsilon,  \nabla_v^k g_\epsilon)\|_{L^2}^2. \]
\begin{lemma}
\label{lemmav}
Under the assumption  in the section \ref{sec-assump-on-L} and the assumption \eqref{estMeaninitial} on the initial data, if $(f_\epsilon, g_\epsilon, B_\epsilon, E_\epsilon)$ are solutions to \eqref{vmbtwolinear}, then 
\begin{aligno}
\label{estv}
& \epsilon^2 \dt \left( \tfrac{8c_1}{3} \sum\limits_{m=1}^{s-1} \dot{H}_{v,\epsilon}^{m}(t)  + \dot{H}_{v,\epsilon}^{s}(t)\right)  +  \tfrac{3}{4} \|(\nabla_v f_\epsilon, \nabla_v g_\epsilon)\|_{H^{s-1}_\Lambda}^2\\
& \le b_1 \|(f_\epsilon, g_\epsilon)\|_{H^{s-1}_x}^2  +  b_1 \alpha_\epsilon^2\| E_\epsilon\|_{H^{s-1}_x}^2  + C \|(f_\epsilon, g_\epsilon, E_\epsilon, B_\epsilon)\|_{H^s_x}\|(f_\epsilon, g_\epsilon)\|_{H^s_\Lambda}^2 \\
& +  C \sdot \epsilon   \|(\nabla_v f_\epsilon, \nabla_v g_\epsilon)\|_{H^{s-1}}\|(f_\epsilon, g_\epsilon)\|_{H^s_\Lambda}^2,
\end{aligno}
where $c_1$ comes from the computation and is only dependent of the Sobolev embedding constants.
\end{lemma}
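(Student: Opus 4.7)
The plan is to apply $\nabla_v^i \nabla_x^j$ with $i\ge 1$ and $i+j=k$ to the first two equations of \eqref{vmbtwolinear}, test against $\epsilon^2 \nabla_v^i\nabla_x^j f_\epsilon$ and $\epsilon^2\nabla_v^i \nabla_x^j g_\epsilon$ respectively, integrate over $\mathbb{T}^3\times\mathbb{R}^3$, then sum over $k$ from $1$ to $s$ with the weighting prescribed by $\dot{H}_{v,\epsilon}^m$. The time-derivative term produces $\epsilon^2 \frac{d}{dt}\|\nabla_v^i\nabla_x^j f_\epsilon\|_{L^2}^2$, and after multiplication the collision part $\frac{1}{\epsilon^2}\mathcal{L}$ (resp.\ $\frac{1}{\epsilon^2}\mathsf{L}$) yields the coercive lower bound $(1-\delta)\|\nabla_v^i\nabla_x^j f_\epsilon\|_{L^2_\Lambda}^2$ modulo a lower order $\|f_\epsilon\|_{H^{k-1}}^2$ error by \eqref{constant-a3-a4-hi} and \eqref{constant-delta}. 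These are exactly the dissipation and first error term $b_1\|(f_\epsilon,g_\epsilon)\|_{H^{s-1}_x}^2$ in the statement.

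The central difficulty, and the reason for the non-trivial hierarchical weighting $\frac{8c_1}{3}\sum_m \dot{H}^m_{v,\epsilon}$, is the transport commutator. Since $\nabla_v^i(v\cdot \nabla_x h) = v\cdot\nabla_x \nabla_v^i h + i\,\nabla_x\nabla_v^{i-1}h$, the anti-symmetric part $\int v\cdot\nabla_x\nabla_v^i\nabla_x^j h\cdot \nabla_v^i\nabla_x^j h\,\mathrm{d} v\mathrm{d} x$ vanishes but the commutator produces a cross term of order $\frac{1}{\epsilon}\cdot\epsilon^2 = \epsilon$ of the form $\epsilon\int \nabla_v^{i-1}\nabla_x^{j+1}h\cdot \nabla_v^i\nabla_x^j h\,\mathrm{d} v\mathrm{d} x$. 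By Cauchy--Schwarz this is bounded by $\eta \|\nabla_v^i\nabla_x^j h\|_{L^2_\Lambda}^2 + C_\eta\|\nabla_v^{i-1}\nabla_x^{j+1}h\|_{L^2}^2$, so it can be partially absorbed into the current dissipation, but the residual lives at order $k$ with one more $x$-derivative and one fewer $v$-derivative. This is precisely what the weight $8$ (on mixed derivatives) and the sum $\frac{8c_1}{3}\sum_{m=1}^{s-1}\dot{H}^m_{v,\epsilon}$ absorb: at each order $k$, the commutator outflow feeds into a strictly smaller weight at the same order (or into a lower $k$ when $i=1$, $j=k-1$, where the residual $\|\nabla_x^k f_\epsilon\|^2$ is controlled by $\|f_\epsilon\|_{H^{s-1}_x}^2$ and absorbed into $b_1\|(f_\epsilon,g_\epsilon)\|_{H^{s-1}_x}^2$). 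The constant $c_1$ is chosen large enough so that the weighted sum of commutator bounds leaves a residual dissipation at least $\frac{3}{4}\|(\nabla_v f_\epsilon,\nabla_v g_\epsilon)\|_{H^{s-1}_\Lambda}^2$.

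The singular Vlasov term $\frac{\alpha_\epsilon}{\epsilon^2}E_\epsilon\cdot v$ in the equation for $g_\epsilon$ gives, after testing and multiplication by $\epsilon^2$, a term of shape $\alpha_\epsilon\int \nabla_v^i\nabla_x^j(E_\epsilon\cdot v)\,\nabla_v^i\nabla_x^j g_\epsilon\,\mathrm{d} v\mathrm{d} x$. Distributing derivatives (and using $\nabla_v v = \mathrm{Id}$ to collapse a $v$-derivative) and then applying Cauchy--Schwarz with weight on $v$ yields $b_1\alpha_\epsilon^2\|E_\epsilon\|_{H^{s-1}_x}^2 + \frac{1}{16}\|\nabla_v^i\nabla_x^j g_\epsilon\|_{L^2_\Lambda}^2$, producing the $b_1\alpha_\epsilon^2\|E_\epsilon\|^2_{H^{s-1}_x}$ term. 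The Lorentz-force nonlinearities $(\frac{\alpha_\epsilon}{\epsilon}E_\epsilon + \frac{\beta_\epsilon}{\epsilon}v\times B_\epsilon)\cdot\nabla_v f_\epsilon$ etc.\ are handled as in Lemma \ref{lemmaonlyx}: after multiplication by $\epsilon^2$, the prefactor becomes $\epsilon\alpha_\epsilon$ or $\epsilon\beta_\epsilon$, both $O(\epsilon)$ under \eqref{relationabg}; combined with the Sobolev embedding in $x$ and decomposition of $g_\epsilon$ into $\bdp g_\epsilon + g_\epsilon^\perp$ (using $\nabla_v\bdp g_\epsilon=0$) this contributes the $C\|(f_\epsilon,g_\epsilon,E_\epsilon,B_\epsilon)\|_{H^s_x}\|(f_\epsilon,g_\epsilon)\|_{H^s_\Lambda}^2$ and $C\epsilon\|(\nabla_v f_\epsilon,\nabla_v g_\epsilon)\|_{H^{s-1}}\|(f_\epsilon,g_\epsilon)\|_{H^s_\Lambda}^2$ contributions.

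Finally, the nonlinear collisional terms $\frac{1}{\epsilon}\Gamma(\cdot,\cdot)$, after multiplication by $\epsilon^2$, leave an $\epsilon$ prefactor and are controlled by assumption \textbf{H4}, landing in the last two error terms of the estimate. The overall strategy is therefore standard testing and summation, but the main obstacle is the careful bookkeeping of the transport commutator across the hierarchy of orders $k=1,\ldots,s$: the specific constants $8$ and $\frac{8c_1}{3}$ are precisely what make the cascade close at every level while preserving the $\frac{3}{4}$ prefactor on the dissipation.
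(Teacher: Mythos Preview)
Your proposal is correct and follows essentially the same approach as the paper: apply $\nabla_v^i\nabla_x^j$ with $i\ge 1$, test with $\epsilon^2\nabla_v^i\nabla_x^j f_\epsilon$ (resp.\ $g_\epsilon$), extract coercivity from \textbf{H1}--\textbf{H2}, bound the transport commutator by Cauchy--Schwarz so that it cascades to one fewer $v$-derivative at the same total order, and absorb the cascade with the hierarchical weights in $\dot H^k_{v,\epsilon}$; the $\alpha_\epsilon E_\epsilon\!\cdot\!v$ and the nonlinear forcing terms are then handled exactly as you describe.

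One small inaccuracy worth flagging: at $k=s$, $i=1$, $j=s-1$, the transport-commutator residual is $\|\nabla_x^s(f_\epsilon,g_\epsilon)\|_{L^2}^2$, which is \emph{not} controlled by $\|(f_\epsilon,g_\epsilon)\|_{H^{s-1}_x}^2$ as you claim. The paper's statement and proof share this imprecision (their \eqref{estv-k} writes $H^{k-1}_x$ on the right while the commutator from \eqref{estv-1-1} actually contributes $\|\nabla_x^k\cdot\|^2$). This does not break the overall scheme, because in Lemma~\ref{lemmawhole} the estimate is combined with $b_4$ copies of Lemma~\ref{lemmamacrox}, which supplies dissipation of $\|(\nabla_x f_\epsilon,\nabla_x g_\epsilon)\|_{H^{s-1}_x}^2$ and thus absorbs an $H^s_x$ error just as well. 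Also, the micro--macro splitting $g_\epsilon=\mathcal{P}g_\epsilon+g_\epsilon^\perp$ you invoke for the Lorentz terms is not actually needed here (unlike in Lemma~\ref{lemmaonlyx}), since after the $\epsilon^2$ multiplication the prefactors $\epsilon\alpha_\epsilon,\epsilon\beta_\epsilon$ are already $O(\epsilon)$ and a direct Sobolev/H\"older bound suffices, as the paper does in \eqref{estvt21+t22}.
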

\begin{proof}
Applying $\nabla_x^j\nabla_v^i$ to the first two equations of \eqref{vmbtwolinear}, then multiplying the resulting equations by $ \epsilon^2 \nabla_x^j \nabla_v^i f_\epsilon \m $ and  $ \epsilon^2 \nabla_x^j \nabla_v^i g_\epsilon \m $ respectively, the integration over $\phs$ leads to
\begin{aligno}
\label{estv-0}
& \epsilon^2 \dt \|(\nabla_x^j\nabla_v^i f_\epsilon, \nabla_x^j\nabla_v^i g_\epsilon)\|_{L^2}^2 + \intps \left( \nabla_x^j \nabla_v^i  \mathcal{L}(f_\epsilon) \sdot \nabla_x^j\nabla_v^i f_\epsilon  +  \nabla_x^j \nabla_v^i  \mathsf{L}(g_\epsilon) \sdot \nabla_x^j\nabla_v^i g_\epsilon \right) \bdv \bd x\\
& = - \epsilon \intps \left(  \nabla_x^j \nabla_v^i ( \vdot f_\epsilon) \sdot \nabla_x^j\nabla_v^i f_\epsilon  + \nabla_x^j \nabla_v^i ( \vdot g_\epsilon) \sdot \nabla_x^j\nabla_v^i g_\epsilon \right) \bdv \bd x \\
& \quad
 + \epsilon \intps \left(\nabla_x^j\nabla_v^i N_f \sdot \nabla_x^j \nabla_v^i f_\epsilon   + \nabla_x^j\nabla_v^i N_g \sdot \nabla_x^j \nabla_v^i g_\epsilon \right) \bdv \bd x \\
& \quad  + \alpha_\epsilon \intps \nabla_x^j \nabla_v^i (v \sdot E_\epsilon) \sdot \nabla_x^j \nabla_v^i g_\epsilon \bdv \bd x\\
& = T_1 + T_2 + T_3.
\end{aligno}
For the left hand of \eqref{estv-0},  by the assumption on the linear Boltzmann operator, we can infer that
\begin{aligno}
\label{estvleft}
& \tfrac{15}{16}\|(\nabla_x^j \nabla_v^i f_\epsilon, \nabla_x^j \nabla_v^i g_\epsilon)\|_{L^2_\Lambda}^2  -  C \|(   f_\epsilon,   g_\epsilon)\|_{H^{k-1}}^2  \\
 &  \le \intps \left( \nabla_x^j \nabla_v^i  \mathcal{L}(f_\epsilon) \sdot \nabla_x^j\nabla_v^i f_\epsilon  +  \nabla_x^j \nabla_v^i  \mathsf{L}(g_\epsilon) \sdot \nabla_x^j\nabla_v^i g_\epsilon \right) \bdv \bd x.
\end{aligno}
We first deal with the quadratic term ($T_1$ and $T_3$) in  the right hand of \eqref{estv-0}. For $T_3$,  by H\"older inequality, we can infer that
\begin{align}
\label{estvt3}
T_3 \le 4 \alpha_\epsilon^2\|\nabla_x^j \nabla_v^i (v  E_\epsilon)\|_{L^2}^2 + \tfrac{1}{16} \|\nabla_x^j \nabla_v^i g_\epsilon\|_{L^2}^2.
\end{align}
For $T_1$, by integration by parts over the phase space, we can infer that
\begin{aligno}
\label{estvt1}
T_1  & \le  \epsilon \|\nabla_v^{i-1} \nabla_x^{j+1} f_\epsilon\|_{L^2}\|\nabla_v^{i} \nabla_x^{j} f_\epsilon\|_{L^2} +  \epsilon \|\nabla_v^{i-1} \nabla_x^{j+1} g_\epsilon\|_{L^2}\|\nabla_v^{i} \nabla_x^{j} g_\epsilon\|_{L^2} \\
& \le 4 \|\nabla_v^{i-1} \nabla_x^{j+1} (f_\epsilon, g_\epsilon)\|_{L^2}^2 + \tfrac{1}{16} \|\nabla_v^{i} \nabla_x^{j} (f_\epsilon, g_\epsilon)\|_{L^2}^2.
\end{aligno}
For the triple term $T_2$, the way of controlling this term is similar to that of $D_1$ and $D_n$ in Lemma \ref{lemmaonlyx}. Recalling that
\begin{align*}
  N_f & =  \tfrac{\alpha_\epsilon}{\epsilon} E_\epsilon\sdot v \sdot g_\epsilon -  ( \repsa E_\epsilon + \repsb v \times B_\epsilon) \sdot \nabla_v g_\epsilon + \reps \Gamma(f_\epsilon, f_\epsilon), \\
  N_g & =  \tfrac{\alpha_\epsilon}{\epsilon} E_\epsilon\sdot v \sdot f_\epsilon - (\repsa E_\epsilon +  \repsb v \times B_\epsilon) \sdot \nabla_v f_\epsilon + \reps \Gamma(g_\epsilon, f_\epsilon),
\end{align*}
we can split $T_2$ as follows
\begin{aligno}
\label{estvt2-0}
T_2 & = \epsilon \intps \nabla_v^j \nabla_v^i \left(   {\alpha_\epsilon}  E_\epsilon\sdot v \sdot g_\epsilon -  ( \alpha_\epsilon E_\epsilon + \beta_\epsilon v \times B_\epsilon) \sdot \nabla_v g_\epsilon \right) \sdot \nabla_v^j \nabla_x^i f_\epsilon \bdv \bd x \\
& + \epsilon \intps \nabla_v^j \nabla_v^i \left(   {\alpha_\epsilon}  E_\epsilon\sdot v \sdot f_\epsilon -  ( \alpha_\epsilon E_\epsilon + \beta_\epsilon v \times B_\epsilon) \sdot \nabla_v f_\epsilon \right) \sdot \nabla_v^j \nabla_x^i g_\epsilon \bdv \bd x \\
& + \epsilon \intps \left( \nabla_x^j \nabla_v^i   \Gamma(f_\epsilon,f_\epsilon)  \sdot \nabla_x^j \nabla_v^i f_\epsilon  +  \nabla_x^j \nabla_v^i \Gamma(g_\epsilon,f_\epsilon)  \sdot \nabla_x^j \nabla_v^i g_\epsilon \right)   \bdv \bd x \\
& = T_{21} + T_{22} + T_{23}.
\end{aligno}
By the same trick of dealing with $D_1$, we can infer that
\begin{align}
\label{estvt21+t22}
T_{21} +T_{22} \le \epsilon\|(E_\epsilon, B_\epsilon)\|_{H^s_x}\|(f_\epsilon,g_\epsilon)\|_{H^s_\Lambda}^2.
\end{align}
By the assumption on the the quadratic collision operator, we can infer
\begin{aligno}
\label{estvt23}
T_{23} \le \epsilon\|(f_\epsilon, g_\epsilon)\|_{H^s}\|(f_\epsilon, g_\epsilon)\|_{H^s_\Lambda}^2.
\end{aligno}
Combining \eqref{estvt2-0}, \eqref{estvt21+t22} and \eqref{estvt23}, we can infer that
\begin{align}
\label{estvt2}
T_2 \le C\|(f_\epsilon, g_\epsilon, E_\epsilon, B_\epsilon)\|_{H^s_x}\|(f_\epsilon, g_\epsilon)\|_{H^s_\Lambda}^2 + C \sdot \epsilon   \|(\nabla_v f_\epsilon, \nabla_v g_\epsilon)\|_{H^{s-1}}\|(f_\epsilon, g_\epsilon)\|_{H^s_\Lambda}^2.
\end{align}

Combining \eqref{estvleft}, \eqref{estvt1}, \eqref{estvt3} and \eqref{estvt2}, we can infer that
\begin{aligno}
\label{estv-1-0}
& \epsilon^2 \dt \|(\nabla_x^j\nabla_v^i f_\epsilon, \nabla_x^j\nabla_v^i g_\epsilon)\|_{L^2}^2 +  \tfrac{3}{4} \|(\nabla_x^j\nabla_v^i f_\epsilon, \nabla_x^j\nabla_v^i g_\epsilon)\|_{L^2_\Lambda}^2 \\
& \le   4 \|\nabla_v^{i-1} \nabla_x^{j+1} (f_\epsilon, g_\epsilon)\|_{L^2}^2 + 4 \alpha_\epsilon^2\|\nabla_v^i\nabla^j_x (v \sdot E_\epsilon)\|_{L^2}^2 +  C \| (f_\epsilon, g_\epsilon)\|_{H^{k-1}}^2 \\
& +  C \|(f_\epsilon, g_\epsilon, E_\epsilon, B_\epsilon)\|_{H^s_x}\|(f_\epsilon, g_\epsilon)\|_{H^s_\Lambda}^2 +  C \sdot \epsilon   \|(\nabla_v f_\epsilon, \nabla_v g_\epsilon)\|_{H^{s-1}}\|(f_\epsilon, g_\epsilon)\|_{H^s_\Lambda}^2.
\end{aligno}
For $i=1, j = k -1$, \eqref{estv-1-0} becomes
\begin{aligno}
\label{estv-1-1}
& \epsilon^2 \dt \|(\nabla_x^{k-1}\nabla_v^1 f_\epsilon, \nabla_x^{k-1}\nabla_v^1 g_\epsilon)\|_{L^2}^2 +  \tfrac{3}{4} \|(\nabla_x^{k-1}\nabla_v^1 f_\epsilon, \nabla_x^{k-1}\nabla_v^1 g_\epsilon)\|_{L^2_\Lambda}^2 \\
& \le   4 \|  \nabla_x^{k} (f_\epsilon, g_\epsilon)\|_{L^2}^2 + 4 \alpha_\epsilon^2\|\nabla^{k-1}_x E_\epsilon\|_{L^2}^2 +  C \| (f_\epsilon, g_\epsilon)\|_{H^{k-1}}^2 \\
& +  C \|(f_\epsilon, g_\epsilon, E_\epsilon, B_\epsilon)\|_{H^s_x}\|(f_\epsilon, g_\epsilon)\|_{H^s_\Lambda}^2 +  C \sdot \epsilon   \|(\nabla_v f_\epsilon, \nabla_v g_\epsilon)\|_{H^{s-1}}\|(f_\epsilon, g_\epsilon)\|_{H^s_\Lambda}^2.
\end{aligno}
For $i=2, j = k -2$, \eqref{estv-1-0} becomes
\begin{aligno}
\label{estv-1-2}
& \epsilon^2 \dt \|(\nabla_x^{k-2}\nabla_v^2 f_\epsilon, \nabla_x^{k-2}\nabla_v^2 g_\epsilon)\|_{L^2}^2 +  \tfrac{3}{4} \|(\nabla_x^{k-2}\nabla_v^2 f_\epsilon, \nabla_x^{k-2}\nabla_v^2 g_\epsilon)\|_{L^2_\Lambda}^2 \\
& \le   4 \| \nabla_v^1 \nabla_x^{k-1} (f_\epsilon, g_\epsilon)\|_{L^2}^2  +  C \| (f_\epsilon, g_\epsilon)\|_{H^{k-1}}^2 \\
& +  C \|(f_\epsilon, g_\epsilon, E_\epsilon, B_\epsilon)\|_{H^s_x}\|(f_\epsilon, g_\epsilon)\|_{H^s_\Lambda}^2 +  C \sdot \epsilon   \|(\nabla_v f_\epsilon, \nabla_v g_\epsilon)\|_{H^{s-1}}\|(f_\epsilon, g_\epsilon)\|_{H^s_\Lambda}^2.
\end{aligno}
\[\vdots \]
For $i=k, j = 0$, \eqref{estv-1-0} becomes
\begin{aligno}
\label{estv-1-k}
& \epsilon^2 \dt \|( \nabla_v^k f_\epsilon,  \nabla_v^k g_\epsilon)\|_{L^2}^2 +  \tfrac{3}{4} \|( \nabla_v^k f_\epsilon, \nabla_v^k g_\epsilon)\|_{L^2_\Lambda}^2 \\
& \le   4 \| \nabla_v^{k-1} \nabla_x^{1} (f_\epsilon, g_\epsilon)\|_{L^2}^2  +  C \| (f_\epsilon, g_\epsilon)\|_{H^{k-1}}^2 \\
& +  C \|(f_\epsilon, g_\epsilon, E_\epsilon, B_\epsilon)\|_{H^s_x}\|(f_\epsilon, g_\epsilon)\|_{H^s_\Lambda}^2 +  C \sdot \epsilon   \|(\nabla_v f_\epsilon, \nabla_v g_\epsilon)\|_{H^{s-1}}\|(f_\epsilon, g_\epsilon)\|_{H^s_\Lambda}^2.
\end{aligno}
From $8 \times$\eqref{estv-1-1} $+~8\times$\eqref{estv-1-2} $ + \cdots$ $+$ \eqref{estv-1-k}, it follows that there exists $d_1$ such that
\begin{aligno}
\label{estv-k}
& \epsilon^2 \dt \left( 8 \sum\limits_{ i \ge 1, j \ge 1 \atop i + j =k}\|( \nabla_v^i \nabla_x^j f_\epsilon,  \nabla_v^i \nabla_x^j g_\epsilon)\|_{L^2}^2  + \|( \nabla_v^k   f_\epsilon,  \nabla_v^k g_\epsilon)\|_{L^2}^2 \right) +  \tfrac{3}{4} \sum\limits_{ i \ge 1  \atop i + j =k} \|( \nabla_v^i\nabla_x^j f_\epsilon, \nabla_v^i\nabla_x^j g_\epsilon)\|_{L^2_\Lambda}^2\\
& \le 4 \alpha_\epsilon^2 \|E_\epsilon\|_{H^{k-1}_x}^2 + C  \| (f_\epsilon, g_\epsilon)\|_{H^{k-1}_x}^2 + C \|(f_\epsilon, g_\epsilon, E_\epsilon, B_\epsilon)\|_{H^s_x}\|(f_\epsilon, g_\epsilon)\|_{H^s_\Lambda}^2 \\
& +  C \sdot \epsilon   \|(\nabla_v f_\epsilon, \nabla_v g_\epsilon)\|_{H^{s-1}}\|(f_\epsilon, g_\epsilon)\|_{H^s_\Lambda}^2  + d_1  \| (\nabla_v f_\epsilon, \nabla_v g_\epsilon)\|_{H^{k-2}}^2.
\end{aligno}
Denoting
\[ \dot{H}_{v,\epsilon}^{k}(t) =    8 \sum\limits_{ i \ge 1, j \ge 1 \atop i + j =k}\|( \nabla_v^i \nabla_x^j f_\epsilon,  \nabla_v^i \nabla_x^j g_\epsilon)\|_{L^2}^2  + \|( \nabla_v^k   f_\epsilon,  \nabla_v^k g_\epsilon)\|_{L^2}^2,  \]
and choosing $\tfrac{3d_2}{4} \ge d_1 + \tfrac{3}{4}$($d_2 \ge 1$), then we can deduce from \eqref{estv-k}
\begin{aligno}
\label{estv-k-2}
& \epsilon^2 \dt \sum\limits_{k=1}^s  (d_2)^{s-1}\dot{H}_{v,\epsilon}^{k}(t)  +  \tfrac{3}{4} \|(\nabla_v f_\epsilon, \nabla_v g_\epsilon)\|_{H^{s-1}_\Lambda}^2\\
& \le C \|(f_\epsilon, g_\epsilon )\|_{H^{k-1}_x}^2 + C \alpha_\epsilon^2 \|  E_\epsilon\|_{H^{k-1}_x}^2  + C \|(f_\epsilon, g_\epsilon, E_\epsilon, B_\epsilon)\|_{H^s_x}\|(f_\epsilon, g_\epsilon)\|_{H^s_\Lambda}^2 \\
& +  C \sdot \epsilon   \|(\nabla_v f_\epsilon, \nabla_v g_\epsilon)\|_{H^{s-1}}\|(f_\epsilon, g_\epsilon)\|_{H^s_\Lambda}^2,~~k \ge 1.
\end{aligno}
where $\|h\|_{H^0} = \|h\|_{L^2}$.

By the similar method of deducing  \eqref{estv-k}, we can infer that
\begin{aligno}
& \epsilon^2 \dt \left( \tfrac{8c_1}{3} \sum\limits_{m=1}^{k-1} \dot{H}_{v,\epsilon}^{m}(t)  + \dot{H}_{v,\epsilon}^{k}(t)\right)  +  \tfrac{3}{4} \|(\nabla_v f_\epsilon, \nabla_v g_\epsilon)\|_{H^{k-1}_\Lambda}^2\\
& \le C \|(f_\epsilon, g_\epsilon, \alpha_\epsilon E_\epsilon)\|_{H^{s-1}_x}^2   + C \|(f_\epsilon, g_\epsilon, E_\epsilon, B_\epsilon)\|_{H^s_x}\|(f_\epsilon, g_\epsilon)\|_{H^s_\Lambda}^2 \\
& +  C \sdot \epsilon   \|(\nabla_v f_\epsilon, \nabla_v g_\epsilon)\|_{H^{s-1}}\|(f_\epsilon, g_\epsilon)\|_{H^s_\Lambda}^2.
\end{aligno}

\end{proof}

\subsection{The dissipative estimates of the macroscopic parts} From Lemma \ref{lemmav}, if we want to close the whole inequalities, we still need to obtain the dissipative energy estimates of $f_\epsilon,~g_\epsilon$ and $E_\epsilon$. This subsection is devoted to the dissipative energy of $f_\epsilon$ and $g_\epsilon$.
\begin{lemma}
\label{lemmamacrox}
Under the assumption  in the section \ref{sec-assump-on-L} and the assumption \eqref{estMeaninitial} on the initial data, if $(f_\epsilon, g_\epsilon, B_\epsilon, E_\epsilon)$ are solutions to \eqref{vmbtwolinear}, then 
\begin{aligno}
\label{estmacrox}
& \epsilon \sdot \tdt \sum\limits_{k=1}^s \intps\left( \nabla_x \nabla_x^{k-1} f_\epsilon \sdot \nabla_v \nabla_x^{k-1} f_\epsilon + \nabla_x \nabla_x^{k-1} g_\epsilon \sdot \nabla_v \nabla_x^{k-1} g_\epsilon  \right) \bdv \bd x   \\
& + \tfrac{3}{4}\|(\nabla_x f_\epsilon, \nabla_x g_\epsilon)\|_{H^{s-1}_x}^2   +     \| \divg E_\epsilon\|_{H^{s-1}_x}^2  - \delta_2 \|\nabla_v(f_\epsilon, g_\epsilon)\|_{H^{s-1}_{\Lambda_x}}^2 - \delta_1 \|E_\epsilon\|_{H^{s-1}_{ x}}^2  \\
& \le  \left(\tfrac{1}{\delta_1  }  + \tfrac{1}{\delta_2  } \right)  \tfrac{C}{  \epsilon^2}  \| (f_\epsilon
^\perp, g_\epsilon^\perp)\|_{H^s_{\Lambda_x}}^2  +  C \|(f_\epsilon, g_\epsilon, E_\epsilon, B_\epsilon)\|_{H^s_x}\|(f_\epsilon, g_\epsilon)\|_{H^s_\Lambda}^2.
\end{aligno}
\end{lemma}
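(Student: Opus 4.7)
The plan is to implement the hypocoercivity mixed-norm trick of \cite{mouhotneumann-2006-decay,briant-2015-be-to-ns}, adapted simultaneously to both species $f_\epsilon$ and $g_\epsilon$ and to all orders $1\le k\le s$. Concretely, for each such $k$, I apply $\nabla_x^{k-1}$ to the first two equations of \eqref{vmbtwolinear}, then differentiate once in $x$ to get the identity for $\nabla_x\nabla_x^{k-1}(f_\epsilon,g_\epsilon)$ and once in $v$ to get the identity for $\nabla_v\nabla_x^{k-1}(f_\epsilon,g_\epsilon)$. Taking the $L^2$ pairing of the first with $\nabla_v\nabla_x^{k-1}(f_\epsilon,g_\epsilon)$ and of the second with $\nabla_x\nabla_x^{k-1}(f_\epsilon,g_\epsilon)$, and adding, the time derivative produces exactly $\tdt\intps\nabla_x\nabla_x^{k-1}f_\epsilon\cdot\nabla_v\nabla_x^{k-1}f_\epsilon\bdv\bd x$ (and similarly for $g_\epsilon$). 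Multiplying the whole identity by $\epsilon$ and summing over $k$ is what produces the left-hand side of \eqref{estmacrox}.

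The key algebraic miracle comes from the transport term $\reps\vdot$. When $\nabla_v$ falls on $v\sdot\nabla_x$ it produces $\reps\nabla_x(f_\epsilon,g_\epsilon)$; pairing with $\nabla_x(f_\epsilon,g_\epsilon)$ and multiplying by $\epsilon$ yields precisely the dissipation $\|\nabla_x(f_\epsilon,g_\epsilon)\|_{H^{s-1}_x}^2$. The remaining pieces $\reps v\sdot\nabla_x\nabla_v$ from both sides combine (after integrating by parts in $x$ on the torus) and either cancel or are lower-order. For $g_\epsilon$ there is the extra linear forcing $\tfrac{\alpha_\epsilon}{\epsilon^2}E_\epsilon\sdot v$; its $\nabla_v$ is simply $\tfrac{\alpha_\epsilon}{\epsilon^2}E_\epsilon$, so pairing with $\nabla_xg_\epsilon$, integrating in $v$, and integrating by parts in $x$ produces $\tfrac{\alpha_\epsilon}{\epsilon^2}\intt(\divg E_\epsilon)n_\epsilon\bd x$. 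Inserting Gauss's law $n_\epsilon=\tfrac{\epsilon}{\alpha_\epsilon}\divg E_\epsilon$ collapses this to $\reps\|\divg E_\epsilon\|_{L^2}^2$, which after multiplication by $\epsilon$ supplies the $\|\divg E_\epsilon\|_{H^{s-1}_x}^2$ dissipation in the statement. The $\nabla_x$ derivative of $\tfrac{\alpha_\epsilon}{\epsilon^2}E_\epsilon\sdot v$ paired with $\nabla_vg_\epsilon$ will be controlled by Young's inequality, generating the $\delta_1\|E_\epsilon\|_{H^{s-1}_x}^2$ defect.

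For the dissipative part coming from $\mathcal{L}$ and $\mathsf{L}$, the $\nabla_x$-differentiated identity gives, up to a sign, $-\repst\intps\nabla_x^k\mathcal{L}(f_\epsilon)\sdot\nabla_x^{k-1}\nabla_vf_\epsilon\bdv\bd x$ (and similarly for $g_\epsilon$). Writing $\nabla_x^kf_\epsilon=\nabla_x^k\bdp f_\epsilon+\nabla_x^kf_\epsilon^\perp$, one uses that $\mathcal{L},\mathsf{L}$ annihilate the macroscopic part and the continuity bound in {\bf H3} to replace this by something of the form $\reps\|\nabla_x^kf_\epsilon^\perp\|_{L^2_\Lambda}\|\nabla_x^{k-1}\nabla_vf_\epsilon\|_{L^2_\Lambda}$, then Young with weights $\delta_2$ and $\tfrac{1}{\delta_2}$ splits off $\delta_2\|\nabla_v(f_\epsilon,g_\epsilon)\|_{H^{s-1}_{\Lambda_x}}^2$ on the LHS against $\tfrac{C}{\delta_2\epsilon^2}\|(f_\epsilon^\perp,g_\epsilon^\perp)\|_{H^s_{\Lambda_x}}^2$ on the right; the $\reps E_\epsilon\sdot v$-type cross terms are split similarly with weight $\delta_1$. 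Hypothesis {\bf H2} covers the $\nabla_v\mathcal{L}$-commutator terms, producing the same kind of defects.

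The nonlinear forcing $N_f,N_g$ is handled verbatim as in the estimates of $D_1$ and $D_n$ in Lemma \ref{lemmaonlyx}: the Lorentz pieces are integrated by parts in $v$ to move the $v$-derivatives onto $E_\epsilon,B_\epsilon$, and the $\Gamma$ terms are bounded by {\bf H4}; all contribute $C\|(f_\epsilon,g_\epsilon,E_\epsilon,B_\epsilon)\|_{H^s_x}\|(f_\epsilon,g_\epsilon)\|_{H^s_\Lambda}^2$ on the right. I expect the main technical obstacle to be the bookkeeping of the Leibniz expansion of $\nabla_x^{k-1}$ applied to the Lorentz terms paired with the mixed $\nabla_x\nabla_v$-test function: some commutator terms naively carry an unbounded $\repsb$ at order $\beta_\epsilon=O(1)$ and must be tamed by the $\bdp/\perp$ splitting of $f_\epsilon,g_\epsilon$ (as in \eqref{estr222} of Lemma \ref{lemmaonlyx}) so that one factor always carries the microscopic smallness $\epsilon\|f^\perp\|_{L^2_\Lambda}$ needed to swallow the $\repsb$. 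After this absorption and choosing $\delta_1,\delta_2$ free but small, summing over $k$ yields \eqref{estmacrox}.
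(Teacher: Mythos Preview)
Your plan is essentially the paper's own proof: pair the $\nabla_v\nabla_x^{k-1}$- and $\nabla_x\nabla_x^{k-1}$-differentiated equations against one another, extract $\|\nabla_x(f_\epsilon,g_\epsilon)\|_{H^{s-1}_x}^2$ from the transport commutator, extract $\|\divg E_\epsilon\|_{H^{s-1}_x}^2$ from Gauss's law, and absorb the $\mathcal{L},\mathsf{L}$ contributions by the $\bdp/\perp$ splitting and Young with weights $\delta_1,\delta_2$.

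Two places where your wording is slightly looser than what is actually needed. First, the transport cross-terms do not merely ``combine and either cancel or are lower-order'': the paper computes, via a three-fold integration by parts cycle ($I=\text{good}-I$), that the full transport contribution is \emph{exactly} $\|\nabla_x\nabla_x^{k-1}f_\epsilon\|_{L^2}^2$. Second, for the top-order nonlinear Lorentz pieces in $M_3,M_4$ (all $\nabla_x^{k}$ hitting $f_\epsilon$ or $g_\epsilon$), ``integrating by parts in $v$ to move the $v$-derivatives onto $E_\epsilon,B_\epsilon$'' is not enough by itself---that just shifts the $(s\!+\!1)$-st derivative from one factor to the other. The paper handles these exactly as it handled the transport term and $D_2$: the same three-fold IBP cycle, combined with the $(f_\epsilon,g_\epsilon)$-symmetrization trick of \eqref{estonlyxD2}, closes the loop. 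Also note that your concern about an ``unbounded $\tfrac{\beta_\epsilon}{\epsilon}$'' is moot here, since the whole identity already carries a prefactor $\epsilon$, so the Lorentz coefficients become $\alpha_\epsilon,\beta_\epsilon\le 1$; the $\bdp/\perp$ splitting is not needed for that purpose in this lemma.
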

\begin{proof}
Applying $\nabla_v \nabla_x^{k-1}$ to the first two equations of \eqref{vmbtwolinear}, multiplying them by $\nabla_x \nabla_x^{k-1} f_\epsilon$ and $\nabla_x \nabla_x^{k-1} g_\epsilon$ respectively and then integrating the resulting equation over the phase space, we can infer that
\begin{aligno}
\label{estmacrox-0}
& \epsilon \sdot \tdt \intps\left( \nabla_x \nabla_x^{k-1} f_\epsilon \sdot \nabla_v \nabla_x^{k-1} f_\epsilon + \nabla_x \nabla_x^{k-1} g_\epsilon \sdot \nabla_v \nabla_x^{k-1} g_\epsilon  \right) \bdv \bd x \\
& +  \intps\left( \nabla_v \nabla_x^{k-1} \left( \vdot f_\epsilon\right) \sdot \nabla_x \nabla_x^{k-1} f_\epsilon + \nabla_x \nabla_x^{k-1} \left( \vdot f_\epsilon\right) \sdot \nabla_v \nabla_x^{k-1} f_\epsilon  \right) \bdv \bd x \\
& +  \intps\left( \nabla_v \nabla_x^{k-1} \left( \vdot g_\epsilon\right) \sdot \nabla_x \nabla_x^{k-1} g_\epsilon + \nabla_x \nabla_x^{k-1} \left( \vdot g_\epsilon\right) \sdot \nabla_v \nabla_x^{k-1} g_\epsilon  \right) \bdv \bd x \\
& - \reps \intps\left( \nabla_v \nabla_x^{k-1} \mathcal{L}(f_\epsilon) \sdot \nabla_x \nabla_x^{k-1} f_\epsilon + \nabla_x \nabla_x^{k-1} \mathcal{L}(f_\epsilon) \sdot \nabla_v \nabla_x^{k-1} f_\epsilon  \right) \bdv \bd x \\
& - \reps \intps\left( \nabla_v \nabla_x^{k-1} \mathsf{L}(g_\epsilon) \sdot \nabla_x \nabla_x^{k-1} g_\epsilon + \nabla_x \nabla_x^{k-1} \mathsf{L}(g_\epsilon) \sdot \nabla_v \nabla_x^{k-1} g_\epsilon  \right) \bdv \bd x \\
&  + \repsa \intps \left( \nabla_v \nabla_x^{k-1}(v\!\cdot\!E_\epsilon) \sdot \nabla_x \nabla_x^{k-1} g_\epsilon + \nabla_x \nabla_x^{k-1}(v\!\cdot\!E_\epsilon) \sdot \nabla_v \nabla_x^{k-1} g_\epsilon \bdv \right)\bdv \bd s \\
& = \epsilon \intps \left( \nabla_x \nabla_x^{k-1}N_f \sdot \nabla_v \nabla_x^{k-1} f_\epsilon +  \nabla_v \nabla_x^{k-1}N_f \sdot \nabla_x \nabla_x^{k-1} g_\epsilon \right)  \bdv \bd x\\
& + \epsilon \intps \left( \nabla_x \nabla_x^{k-1}N_g \sdot \nabla_v \nabla_x^{k-1} f_\epsilon +  \nabla_v \nabla_x^{k-1}N_g \sdot \nabla_x \nabla_x^{k-1} g_\epsilon \right)  \bdv \bd x\\
& = M_3 + M_4.
\end{aligno}
We will control the left hand of \eqref{estmacrox-0} respectively. By integration by parts, we can infer
\begin{aligno}
 & \intps\left( \nabla_v \nabla_x^{k-1} \left( \vdot f_\epsilon\right) \sdot \nabla_x \nabla_x^{k-1} f_\epsilon + \nabla_x \nabla_x^{k-1} \left( \vdot f_\epsilon\right) \sdot \nabla_v \nabla_x^{k-1} f_\epsilon  \right) \bdv \bd x \\
 & = \|v \sdot \nabla_x \nabla^{k-1}_x f_\epsilon\|_{L^2} + 2 \intps  \nabla_x \nabla_x^{k-1} \left( \vdot f_\epsilon\right) \sdot \nabla_v \nabla_x^{k-1} f_\epsilon    \bdv \bd x.
\end{aligno}
By the integrating by parts~(three times), we can obtain 
\begin{align}
I & = \intps  \nabla_x \nabla_x^{k-1} \left( \vdot f_\epsilon\right) \sdot \nabla_v \nabla_x^{k-1} f_\epsilon    \bdv \bd x &\\
&  = \intps  (v^j\sdot\nabla_{x_j x_i}^2 \nabla_x^{k-1} f_\epsilon)\partial_{v_i} \nabla_x^{k-1} f_\epsilon \bdv \bd x  \\
&= \|\nabla_x \nabla_x^{k-1}f_\epsilon\|_{L^2}^2 - \|v \sdot \nabla_x \nabla^{k-1}_x f_\epsilon\|_{L^2} - I.
\end{align}
We comments here for more details.  For $k=s$, when all the derivative acts on $f_\epsilon$, the following term
\[ \intps     \left( \vdot  \nabla_x^{s-1} \nabla_x f_\epsilon\right) \sdot \nabla_v \nabla_x^{k-1} f_\epsilon    \bdv \bd x\]
occurs. Formally, this term is   bad. But after integrating by parts three times ($(-1)^3$), there exists a cycle ($I = \text{good terms}~~ - I$). This trick works fine for $M_3$ and $M_4$. 

Thus, we can finally have
\begin{aligno}
\label{estmacro-three-f}
 & \intps\left( \nabla_v \nabla_x^{k-1} \left( \vdot f_\epsilon\right) \sdot \nabla_x \nabla_x^{k-1} f_\epsilon + \nabla_x \nabla_x^{k-1} \left( \vdot f_\epsilon\right) \sdot \nabla_v \nabla_x^{k-1} f_\epsilon  \right) \bdv \bd x \\
 & =\|\nabla_x \nabla_x^{k-1}f_\epsilon\|_{L^2}^2.  
\end{aligno}
For the terms with coefficient $\reps$ in the left hand of \eqref{estmacrox-0}, we denote 
\begin{aligno}
M_1 & = - \reps \intps  \nabla_x \nabla_x^{k-1} \mathcal{L}(f_\epsilon) \sdot \nabla_v \nabla_x^{k-1} f_\epsilon    \bdv \bd x \\
& - \reps \intps  \nabla_v \nabla_x^{k-1} \mathcal{L}(f_\epsilon) \sdot \nabla_x \nabla_x^{k-1} f_\epsilon   \bdv \bd x\\
& = M_{11} + M_{12},
\end{aligno}
and
\begin{aligno}
M_2 & = - \reps \intps  \nabla_x \nabla_x^{k-1} \mathsf{L}(g_\epsilon) \sdot \nabla_v \nabla_x^{k-1} g_\epsilon    \bdv \bd x \\
& - \reps \intps  \nabla_v \nabla_x^{k-1} \mathsf{L}(g_\epsilon) \sdot \nabla_x \nabla_x^{k-1} g_\epsilon   \bdv \bd x.
\end{aligno}
$M_{11}$ is easy  to be controlled. Indeed, 
\begin{align*}
|M_{11}| &  \le   \reps \vert \intps  \nabla_x \nabla_x^{k-1} \mathcal{L}(f_\epsilon) \sdot \nabla_v \nabla_x^{k-1} f_\epsilon   \bdv \bd x \vert \\
& \le \tfrac{1}{2 \delta \epsilon^2} \|\nabla_x^k f_\epsilon^\perp\|_{L^2_\Lambda}^2 + \tfrac{\delta}{2}\|\nabla_v \nabla_x^{k-1} f_\epsilon\|_{L^2_\Lambda}^2.
\end{align*}
For $M_{12}$, by the decomposition of $\nabla_x\nabla_x^{k-1} f_\epsilon$, we can obtain that
\begin{align*}
|M_{12}| &  \le   \reps \vert \intps  \nabla_v \nabla_x^{k-1} \mathcal{L}(f_\epsilon) \sdot \nabla_x \nabla_x^{k-1} f_\epsilon   \bdv \bd x \vert \\
& \le   \reps \vert \intps  \nabla_v \nabla_x^{k-1} \mathcal{L}(f_\epsilon) \sdot \nabla_x \nabla_x^{k-1} \bdp f_\epsilon   \bdv \bd x \vert \\
& + \reps \vert \intps  \nabla_v  \mathcal{L}(\nabla_x^{k-1} f_\epsilon^\perp ) \sdot \nabla_x \nabla_x^{k-1}   f_\epsilon^\perp   \bdv \bd x \vert\\
& = M_{121} + M_{122}.
\end{align*}
For $M_{121}$, by integration by parts, it follows that
\begin{align*}
M_{121} & =   \reps \vert \intps    \mathcal{L}( \nabla_x^{k-1} f_\epsilon^\perp ) \sdot \nabla_x \nabla_x^{k-1} \nabla_v ( \bdp f_\epsilon \m)   \bd v \bd x \vert \\
& \le C  \reps \|\nabla_x^{k-1} f_\epsilon^\perp\|_{L^2_\Lambda} \|\nabla_x \nabla_x^{k-1} \bdp f_\epsilon\|_{L^2} \\
& \le \tfrac{C}{\epsilon^2} \|\nabla_x^{k-1} f_\epsilon^\perp\|_{L^2_\Lambda}^2 + \tfrac{1}{16} \|\nabla_x \nabla_x^{k-1} f_\epsilon\|_{L^2}^2. 
\end{align*}
By the assumptions on the linear Boltzmann operator, we can deduce that for $M_{122}$ 
\begin{align*}
M_{122} & \le C \sdot \reps \left( \| \nabla_x^{k-1} f_\epsilon^\perp\|_{L^2 } +  \|\nabla_v \nabla_x^{k-1} f_\epsilon\|_{L^2_\Lambda} \right)\| \nabla_x \nabla_x^{k-1}   f_\epsilon^\perp\|_{L^2_\Lambda} \\
& \le \tfrac{C}{2\delta_2 \epsilon^2} \|\nabla_x^{k-1}   f_\epsilon^\perp\|_{H^1_{\Lambda_x}}^2  + \tfrac{\delta_2}{2}\|\nabla_v \nabla_x^{k-1} f_\epsilon\|_{L^2_\Lambda}^2.
\end{align*}
With help of the related estimates of $M_1$, we can finally have
\begin{align}
\label{estmacrom1}
|M_1| \le \tfrac{C}{\delta \epsilon^2}  \|\nabla_x^{k-1}   f_\epsilon^\perp\|_{H^1_{\Lambda_x}}^2   + \delta \|\nabla_v \nabla_x^{k-1} f_\epsilon\|_{L^2_\Lambda}^2 + \tfrac{1}{16} \|\nabla_x \nabla_x^{k-1} f_\epsilon\|_{L^2}^2.
\end{align}
Similarly, for $M_2$, we can obtain that
\begin{align}
\label{estmacrom2}
|M_2| \le \tfrac{C}{\delta \epsilon^2}  \|\nabla_x^{k-1}   g_\epsilon^\perp\|_{H^1_{\Lambda_x}}^2   + \delta \|\nabla_v \nabla_x^{k-1} g_\epsilon\|_{L^2_\Lambda}^2 + \tfrac{1}{16} \|\nabla_x \nabla_x^{k-1} g_\epsilon\|_{L^2}^2.
\end{align}
For the last term in the left hand of \eqref{estmacrox-0}, by integration by parts, it follows that
\begin{aligno}
\label{estmacroe}
& \repsa \intps \left( \nabla_v \nabla_x^{k-1}(v\!\cdot\!E_\epsilon) \sdot \nabla_x \nabla_x^{k-1} g_\epsilon + \nabla_x \nabla_x^{k-1}(v\!\cdot\!E_\epsilon) \sdot \nabla_v \nabla_x^{k-1} g_\epsilon \bdv \right)\bdv \bd s \\
& = \repsa \intps \left( \nabla_v \nabla_x^{k-1}(v\!\cdot\!E_\epsilon) \sdot \nabla_x \nabla_x^{k-1} g_\epsilon + \nabla_x \nabla_x^{k-1}(v\!\cdot\!E_\epsilon) \sdot \nabla_v \nabla_x^{k-1} g_\epsilon^\perp \bdv \right)\bdv \bd s \\
& =  \repsa \intps \left(   \nabla_x^{k-1} E_\epsilon \sdot \nabla_x \nabla_x^{k-1} g_\epsilon - \nabla_x^{k-1}(v\!\cdot\!E_\epsilon) \sdot v \sdot \nabla_x \nabla_x^{k-1} g_\epsilon^\perp \bdv \right)\bdv \bd s\\
&  \le -   \|\divg (\nabla_x^{k-1} E_\epsilon)\|_{L^2}^2 + \delta_1 \tfrac{\alpha_\epsilon^2}{\epsilon^2} \|\nabla_x^{k-1} E_\epsilon\|_{L^2}^2 + \tfrac{C}{\delta}\|\nabla_x^k g^\perp_\epsilon\|_{L^2}^2,
\end{aligno}
where we have used the fact that
\[\divg E_\epsilon = \repsa \intv g_\epsilon \bdv,~~\nabla_v g_\epsilon = \nabla_v g_\epsilon^\perp.  \]

In the light of \eqref{estmacrox-0}, \eqref{estmacro-three-f}, \eqref{estmacrom1}, \eqref{estmacrom2} and \eqref{estmacroe}, there exists some $c_2$ such that
\begin{aligno}
\label{estmacrox-1}
& \epsilon \sdot \tdt \intps\left( \nabla_x \nabla_x^{k-1} f_\epsilon \sdot \nabla_v \nabla_x^{k-1} f_\epsilon + \nabla_x \nabla_x^{k-1} g_\epsilon \sdot \nabla_v \nabla_x^{k-1} g_\epsilon  \right) \bdv \bd x   \\
& + \tfrac{3}{4}\|(\nabla_x\nabla_x^{k-1}f_\epsilon, \nabla_x\nabla_x^{k-1}g_\epsilon)\|_{L^2}^2 - \tfrac{C}{\delta \epsilon^2}  \|\nabla_x^{k-1}(f_\epsilon
^\perp, g_\epsilon^\perp)\|_{H^1_{\Lambda_x}}^2 \\
& - c_2 \tfrac{\alpha^2_\epsilon}{\epsilon^2} \|\nabla_x^{k-1} E_\epsilon\|_{L^2}^2  - \delta \|\nabla_v \nabla_x^{k-1}(f_\epsilon, g_\epsilon)\|_{L^2_\Lambda}^2  \\
& \le  M_3 +M_4.
\end{aligno}
With the same trick to that of deducing \eqref{estonlyxD2} and \eqref{estmacro-three-f} (integrating by parts three time), we can deduce that
\begin{align}
M_3 + M_4 \le  C \|(f_\epsilon, g_\epsilon, E_\epsilon, B_\epsilon)\|_{H^s_x}\|(f_\epsilon, g_\epsilon)\|_{H^s_\Lambda}^2.
\end{align}

\end{proof}

\subsection{The dissipative estimates of the electromagnetic parts}

\begin{lemma}
\label{lemma-curl-e}
Under the assumption  in the section \ref{sec-assump-on-L} and the assumption \eqref{estMeaninitial} on the initial data, if $(f_\epsilon, g_\epsilon, B_\epsilon, E_\epsilon)$ are solutions to \eqref{vmbtwolinear}, then  for $k \le s -2$, 
\begin{aligno}
\label{este-dis-e}
&      \tdt \intt \left( |\curl \nabla_x^k E_\epsilon|^2 + |\curl \nabla_x^k B_\epsilon|^2  - 2 \alpha_\epsilon \sdot \curl \nabla_x^k \tilde j_\epsilon \sdot  \curl  \nabla_x^k E_\epsilon \right)\bd x \\
& \qquad +  \tfrac{3\sigma}{4}\tfrac{\alpha_\epsilon^2}{\epsilon^2} \|\curl \nabla_x^k E_\epsilon\|_{L^2}^2  - \tfrac{ \sigma}{16}\tfrac{\alpha_\epsilon^2}{\epsilon^2} \|\curl \nabla_x^k B_\epsilon\|_{L^2}^2\\
 & \le    C \|(f_\epsilon, g_\epsilon, E_\epsilon, B_\epsilon)\|_{H^s_x}^2\|(f_\epsilon, g_\epsilon)\|_{H^s_\Lambda}^2  + C(1+ \tfrac{\epsilon^2}{\gamma_\epsilon^2}) \|\nabla_x^{k} g_\epsilon^\perp\|_{H^2_x}^2.
\end{aligno}
and 
\begin{aligno}
\label{este-dis-e0}
&      \tdt \intt \left( |  E_\epsilon|^2 + |  B_\epsilon|^2  - 2 \alpha_\epsilon \sdot   \tilde j_\epsilon \sdot   E_\epsilon \right)\bd x \\
& \qquad +  \tfrac{3\sigma}{4}\tfrac{\alpha_\epsilon^2}{\epsilon^2} \| E_\epsilon\|_{L^2}^2  - \tfrac{ \sigma}{16}\tfrac{\alpha_\epsilon^2}{\epsilon^2} \|\curl   B_\epsilon\|_{L^2}^2\\
 & \le    C \|(f_\epsilon, g_\epsilon, E_\epsilon, B_\epsilon)\|_{H^s_x}^2\|(f_\epsilon, g_\epsilon)\|_{H^s_\Lambda}^2  + C(1+ \tfrac{\epsilon^2}{\gamma_\epsilon^2}) \|  g_\epsilon^\perp\|_{H^1_x}^2.
\end{aligno}

\end{lemma}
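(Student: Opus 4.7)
The plan follows the two-ingredient strategy sketched in the introduction around \eqref{idea}--\eqref{est-tilde-v}. I will derive (a) a Maxwell-type energy identity at level $\curl\nabla_x^k$ and (b) a damped moment identity for $\tilde j_\epsilon=\intv g_\epsilon\tilde v\bdv$, and then combine the two with a multiplier $-2\alpha_\epsilon$ so that the most singular $\repst j_\epsilon$ contributions annihilate, leaving $\tfrac{\alpha_\epsilon^2}{\epsilon^2}\|\curl\nabla_x^k E_\epsilon\|^2$ as genuine dissipation. For (a), apply $\curl\nabla_x^k$ to the Ampère and Faraday equations in \eqref{vmbtwolinear}, pair with $\curl\nabla_x^k E_\epsilon$ and $\curl\nabla_x^k B_\epsilon$, and integrate over $\mathbb{T}^3$. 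Using $\intt u\sdot\curl v\,\bd x=\intt \curl u\sdot v\,\bd x$ the two coupling integrals cancel, giving
\[
\tfrac{1}{2}\tdt\bigl(\|\curl\nabla_x^k E_\epsilon\|_{L^2}^2+\|\curl\nabla_x^k B_\epsilon\|_{L^2}^2\bigr)=-\tfrac{\beta_\epsilon}{\epsilon\ges}\intt \curl\nabla_x^k j_\epsilon\sdot\curl\nabla_x^k E_\epsilon\,\bd x,
\]
with the coefficient equal to $\tfrac{\alpha_\epsilon}{\epsilon^2}$ by \eqref{relationabg}. For (b), multiply the second equation of \eqref{vmbtwolinear} by $\tilde v\m$ and integrate in $v$; self-adjointness of $\mathsf L$ with $\mathsf L\tilde v=v$ gives $\intv \mathsf L(g_\epsilon)\tilde v\bdv=j_\epsilon$, rotational invariance gives $\intv v_j\tilde v_i\bdv=\sigma\delta_{ij}$, and $\intv \tilde v\bdv=0$ lets one replace $g_\epsilon$ by $g_\epsilon^\perp$ in $\tilde j_\epsilon$. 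The resulting moment equation has the schematic form
\[
\partial_t\tilde j_\epsilon+\reps\divg\intv \tilde v\otimes v\, g_\epsilon\bdv+\sigma\tfrac{\alpha_\epsilon}{\epsilon^2}E_\epsilon+\repst j_\epsilon = \intv N_g\tilde v\bdv.
\]

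Apply $\curl\nabla_x^k$ to the moment equation, pair with $\curl\nabla_x^k E_\epsilon$, and combine this identity with an appropriate multiple of the Maxwell identity (a) so that the terms of the form $\tfrac{\alpha_\epsilon}{\epsilon^2}\intt \curl\nabla_x^k j_\epsilon\sdot\curl\nabla_x^k E_\epsilon$ appearing on both sides cancel exactly. Recognising the remaining time derivatives as the time derivative of the functional $\|\curl\nabla_x^k E_\epsilon\|_{L^2}^2+\|\curl\nabla_x^k B_\epsilon\|_{L^2}^2-2\alpha_\epsilon\intt \curl\nabla_x^k\tilde j_\epsilon\sdot\curl\nabla_x^k E_\epsilon\,\bd x$ (up to a leftover $-2\alpha_\epsilon\intt\curl\nabla_x^k\tilde j_\epsilon\sdot\partial_t\curl\nabla_x^k E_\epsilon$ from the product rule) produces the target identity modulo three error contributions: the divergence term of (b), the $\partial_t$ leftover, and the nonlinear source $\intv N_g\tilde v\bdv$.

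Each error is then absorbed. For the divergence term, decompose $g_\epsilon=n_\epsilon+g_\epsilon^\perp$; the $n_\epsilon$ piece gives $\intv v\otimes\tilde v\, n_\epsilon\bdv=\sigma n_\epsilon\mathbb I$, whose divergence is $\sigma\nabla n_\epsilon$, and $\curl\nabla n_\epsilon\equiv 0$, so only the $g_\epsilon^\perp$ piece survives and is bounded by Young's inequality against $\tfrac{\sigma\alpha_\epsilon^2}{32\epsilon^2}\|\curl\nabla_x^k E_\epsilon\|^2$ (absorbed on the LHS) plus $C\|\nabla_x^k g_\epsilon^\perp\|_{H^2_x}^2$. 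For the $\partial_t$ leftover, substitute Ampère: $\ges\partial_t E_\epsilon=\curl B_\epsilon-\tfrac{\beta_\epsilon}{\epsilon}j_\epsilon$; the $\curl B$ half, after two integrations by parts placing both curls onto $\tilde j_\epsilon$ (valid because $k\le s-2$ leaves two $x$-derivatives of room), is Young-bounded by $\tfrac{\sigma\alpha_\epsilon^2}{16\epsilon^2}\|\curl\nabla_x^k B_\epsilon\|_{L^2}^2$ (placed on the LHS with the indicated negative sign) plus $C\tfrac{\epsilon^2}{\ges^2}\|\nabla_x^k g_\epsilon^\perp\|_{H^2_x}^2$, while the $j$ half, which carries the tame coefficient $\tfrac{\alpha_\epsilon^2}{\epsilon^2}$ (again via $\alpha_\epsilon\ges=\epsilon\beta_\epsilon$), is absorbed similarly. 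Finally the nonlinear source, by assumption H4 and Sobolev embedding, contributes $C\|(f_\epsilon,g_\epsilon,E_\epsilon,B_\epsilon)\|_{H^s_x}^2\|(f_\epsilon,g_\epsilon)\|_{H^s_\Lambda}^2$. Combining everything yields \eqref{este-dis-e}; the companion \eqref{este-dis-e0} is obtained by running the same construction without the outer $\curl$ throughout (so $|\curl E|$ becomes $|E|$, and $\|\nabla_x^k g_\epsilon^\perp\|_{H^2_x}$ becomes $\|g_\epsilon^\perp\|_{H^1_x}$, reflecting one fewer derivative loss).

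The main difficulty is organizing the exact cancellation of the most singular $\repst j_\epsilon$ contribution: the multiplier $-2\alpha_\epsilon$ in front of the cross term is dictated precisely by the scaling relation $\alpha_\epsilon\ges=\epsilon\beta_\epsilon$, and no other choice of constant produces the cancellation; otherwise an uncontrollable $O(\epsilon^{-2})$ residue survives. The second delicate point is handling the $\partial_t E_\epsilon$ that reappears in the leftover term when one differentiates the cross functional; via Ampère this reintroduces a $\curl\curl B_\epsilon$ contribution, which must be placed into the $-\tfrac{\sigma}{16}\tfrac{\alpha_\epsilon^2}{\epsilon^2}\|\curl\nabla_x^k B_\epsilon\|^2$ slot on the LHS. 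This both forces the restriction $k\le s-2$ (to permit the two-derivative transfer onto $\tilde j_\epsilon$) and produces the $\tfrac{\epsilon^2}{\ges^2}$ coefficient on the RHS, matching the stated bound.
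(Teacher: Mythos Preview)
Your proposal is correct and follows essentially the same approach as the paper: derive the $\tilde j_\epsilon$ moment identity by testing the $g_\epsilon$ equation against $\tilde v\m$, pair its $\curl\nabla_x^k$ version with $\curl\nabla_x^k E_\epsilon$, combine with the Maxwell energy identity so that the $\repst j_\epsilon$ contributions cancel (via $\alpha_\epsilon\gamma_\epsilon=\epsilon\beta_\epsilon$), and absorb the $\partial_t E_\epsilon$ leftover through Amp\`ere, which both forces $k\le s-2$ and generates the $\epsilon^2/\gamma_\epsilon^2$ factor. Your treatment of the divergence term---splitting $g_\epsilon=n_\epsilon+g_\epsilon^\perp$ and observing that the $n_\epsilon$ contribution is $\sigma\nabla n_\epsilon$, killed by the outer curl---is in fact slightly more transparent than the paper's (the paper asserts $\tilde v\otimes v\in\mathrm{Ker}^\perp\mathsf L$, which is literally false since $\intv \tilde v_i v_i\bdv=\sigma\neq0$; what is true, and what you use, is that only the traceless part of $\tilde v\otimes v$ lies in $\mathrm{Ker}^\perp\mathsf L$, while the isotropic part is annihilated by curl).
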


\begin{proof}
Multiplying the second equation of \eqref{vmbtwo-rewrite} by $ \tilde{v} \m$ and then integrating over $\mathbb{R}^3$,
we can infer that
\begin{align}
\label{equationJtilde}
\partial_t \tilde{j}_\epsilon\ + \reps \divg  \intv \tilde{v}\otimes v g_\epsilon  \bdv  -  \sigma \tfrac{\alpha_\epsilon}{\epsilon^2} E_\epsilon - \repst \intv \mathsf{L}(g_\epsilon) \tilde v \bdv  =   \intv \tilde v  N_g \bdv,
\end{align}
with 
\begin{align}
\label{constant-sigma}
\sigma = \tfrac{1}{3} \intv \tilde{v}\sdot v \bdv.
\end{align}
Applying the curl operator to the above equation,  
we finally obtain that
\begin{align}
\label{vmb-curl-e}
\partial_t \nabla\timess \nabla^k_x j_\epsilon + \reps \nabla\timess\left(\divg  \intv \tilde{v}\otimes v \nabla^k_x g_\epsilon  \bdv\right)- \sigma \tfrac{\alpha_\epsilon}{\epsilon^2} \curl \nabla_x^k E_\epsilon + \repst \curl  \nabla_x^k {j}_\epsilon  = \curl\intv \tilde v \nabla_x^k N_g \bdv,
\end{align}
where we have used the fact that
\[   {j}_\epsilon = \intv g_\epsilon \sdot  {v} \bdv = \intv \mathsf{L}(g_\epsilon) \sdot \tilde v \bdv.  \]

Multiplying \eqref{vmb-curl-e} by $-\alpha_\epsilon \curl \nabla_x^k E_\epsilon $, then integrating over torus, we finally deduce that
\begin{aligno}
\label{est-curl-e-1}
& - \alpha_\epsilon \intt \partial_t \curl \nabla_x^k \tilde j_\epsilon \sdot  \curl \nabla_x^k E_\epsilon \bd x -  \tfrac{\alpha_\epsilon}{\epsilon}  \intt  \nabla\timess\left(\divg  \intv \tilde v \otimes v \nabla_x^k g_\epsilon  \bdv\right) \sdot \curl \nabla_x^k E_\epsilon \bd x \\
& +  \sigma \tfrac{  \alpha_\epsilon^2}{\epsilon^2}  \|\curl \nabla_x^k E_\epsilon\|_{L^2}^2 - \tfrac{\alpha_\epsilon    }{\epsilon^2} \intt  \curl \nabla_x^k {j}_\epsilon \sdot \curl \nabla_x^k E_\epsilon \bd x = \alpha_\epsilon \intt \curl\intv \tilde v \nabla_x^k N_g \bdv \sdot \curl \nabla_x^k E_\epsilon \bd x
\end{aligno}
Again, applying the curl operator to the third equation of \eqref{vmbtwolinear}
\[  \gamma_\epsilon\partial_t \curl E_\epsilon - \curl\curl B_\epsilon + \repsb \curl j_\epsilon =0. \]
From the above equation, we can infer that
\begin{aligno}
\label{est-curl-e-2}
& - \alpha_\epsilon \intt  \curl \nabla_x^k \tilde j_\epsilon \sdot \partial_t \curl \nabla_x^k E_\epsilon \bd x +  \tfrac{\alpha_\epsilon}{\gamma_\epsilon}\intt \curl \curl \nabla_x^k B_\epsilon \sdot \curl \nabla_x^k \tilde j_\epsilon \bd x \\
&  - \tfrac{\alpha_\epsilon \beta_\epsilon}{\epsilon \gamma_\epsilon} \intt \curl \nabla_x^k j_\epsilon \sdot \curl \nabla_x^k \tilde{j}_\epsilon \bd x =0.
\end{aligno}
The last term in the right of \eqref{est-curl-e-2} is hard to control. Applying the curl operator to the Ampere's equation and Maxwell's equation in \eqref{vmbtwolinear}, then we can infer that
\begin{align}
\label{estcurle-cancel}
\dt \|\curl \nabla_x^k E_\epsilon, \curl \nabla_x^k B_\epsilon)\|_{L^2}^2 = -\tfrac{\alpha_\epsilon  }{\epsilon^2} \intt  \curl \nabla_x^k {j}_\epsilon \sdot \curl \nabla_x^k E_\epsilon \bd x.
\end{align}
From \eqref{est-curl-e-1}, \eqref{est-curl-e-2} and \eqref{estcurle-cancel},
 \begin{align}
\label{est-curl-3}
\begin{split}
&      \tdt \intt \left( |\curl \nabla_x^k E_\epsilon|^2 + |\curl \nabla_x^k B_\epsilon|^2  - 2 \alpha_\epsilon \sdot \curl \nabla_x^k \tilde j_\epsilon \sdot  \curl  \nabla_x^k E_\epsilon \right)\bd x  +  \sigma\tfrac{\alpha_\epsilon^2}{\epsilon^2} \|\curl \nabla_x^k E_\epsilon\|_{L^2}^2 \\
 & =   \tfrac{\alpha_\epsilon}{\epsilon}\intt  \nabla\timess\left(\divg  \intv \tilde v\otimes v \nabla_x^k g_\epsilon  \bdv\right) \sdot \curl \nabla_x^k E_\epsilon \bd x \\
 &  + \tfrac{\alpha_\epsilon ^2}{\epsilon^2} \intt \curl \nabla_x^k j_\epsilon \sdot \curl \nabla_x^k \tilde{j}_\epsilon \bd x  -     \tfrac{\alpha_\epsilon}{\gamma_\epsilon}\intt \curl \curl \nabla_x^k B_\epsilon \sdot \curl \nabla_x^k \tilde j_\epsilon \bd x\\
 & - \alpha_\epsilon \intt \curl\intv \tilde v \nabla_x^k N_g \bdv \sdot \curl \nabla_x^k E_\epsilon \bd x, 
\end{split}
\end{align}
where we have used the following fact
\[ \alpha_\epsilon \gamma_\epsilon = \epsilon \sdot \beta_\epsilon.   \]

For the first  terms in the right hand of \eqref{est-curl-3} and noticing that $\tilde{v}\otimes v \in \mathrm{Ker}^\perp \mathsf{L}$,
\begin{align*}
  \big\vert \repsa \intt  \nabla\timess\left(\divg  \intv  \tilde{v}\otimes v  \nabla_x^k g_\epsilon  \bdv\right) \sdot \curl \nabla_x^k  E_\epsilon \bd x\big\vert   \le C \|\nabla^{k+2}_x g_\epsilon^\perp\|_{L^2}^2 + \tfrac{\sigma}{16}\tfrac{\alpha_\epsilon^2}{\epsilon^2}\|\curl \nabla_x^k E_\epsilon\|_{L^2}^2.
\end{align*}
For the third term in  \eqref{est-curl-3}, by integration by parts, it follows
\begin{align*}
\tfrac{\alpha_\epsilon}{\gamma_\epsilon}\intt \curl \curl \nabla_x^k B_\epsilon \sdot \curl \nabla_x^k j_\epsilon \bd x = & \intt \curl \curl \nabla_x^k\tilde{j}_\epsilon \sdot \curl \nabla_x^k B_\epsilon \bd x \\
& \le C \tfrac{\epsilon^2}{\gamma_\epsilon^2}\|\nabla^{k+2}_x g_\epsilon^\perp\|_{L^2}^2 + \tfrac{\sigma}{16}\tfrac{\alpha_\epsilon^2}{\epsilon^2}\|\curl \nabla_x^k B_\epsilon\|_{L^2}^2,
\end{align*}
where we have used the following fact that 
\begin{align}
\label{est-nabla-u}
\begin{split}
\|\curl \tilde{j}_\epsilon\|_{L^2}^2 \lesssim \|\curl\intv  g_\epsilon^\perp \tilde{v}   \bdv\|_{L^2}^2 \le C \|\nabla_x g_\epsilon^\perp\|_{L^2}^2.
\end{split}
\end{align}
Similarly, we can infer that
\begin{align}
\tfrac{\alpha_\epsilon ^2}{\epsilon^2} \intt \curl \nabla_x^k j_\epsilon \sdot \curl \nabla_x^k \tilde{j}_\epsilon \bd x \le C \sdot \tfrac{\alpha_\epsilon ^2}{\epsilon^2}\|\nabla_x^{k+1} g_\epsilon^\perp\|_{L^2}^2.
\end{align}
For the last one in the right hand of \eqref{est-curl-3}, noticing that $k \le s-2$, it follows that
\begin{aligno}
\label{esteng0}
& \alpha_\epsilon \intt \curl\intv \tilde v \nabla_x^k N_g \bdv \sdot \curl \nabla_x^k E_\epsilon \bd x \\
& = \alpha_\epsilon \sdot \epsilon \intt\curl \intv \tilde{v}\nabla_x^k\left(   E_\epsilon\sdot v \sdot f_\epsilon   \right) \bdv \sdot \repsa \curl\nabla_x^k E_\epsilon \bd x\\
& -  \intt\curl \intv \tilde{v}\nabla_x^k\left(    ( \alpha_\epsilon E_\epsilon +  \beta_\epsilon v \times B_\epsilon) \sdot \nabla_v f_\epsilon + \Gamma(g_\epsilon, f_\epsilon) \right)\bdv \sdot  \repsa  \curl\nabla_x^k E_\epsilon \bd x \\
& \le C \|(f_\epsilon, g_\epsilon, E_\epsilon, B_\epsilon)\|_{H^s_x}^2\|(f_\epsilon, g_\epsilon)\|_{H^s_\Lambda}^2 +  \tfrac{\sigma}{16}\tfrac{\alpha_\epsilon^2}{\epsilon^2}\|\curl \nabla_x^k E_\epsilon\|_{L^2}^2.
\end{aligno}
By the similar way of deducing \eqref{est-curl-3}, we can infer that
 \begin{align}
\label{est-e-0}
\begin{split}
&      \tdt \intt \left( |  E_\epsilon|^2 + |  B_\epsilon|^2  - 2 \alpha_\epsilon \sdot   \tilde j_\epsilon \sdot    E_\epsilon \right)\bd x  +  \sigma\tfrac{\alpha_\epsilon^2}{\epsilon^2} \|  E_\epsilon\|_{L^2}^2 \\
 & =   \tfrac{\alpha_\epsilon}{\epsilon}\intt  \left(\divg  \intv \tilde v\otimes v   g_\epsilon  \bdv\right) \sdot   E_\epsilon \bd x \\
 &  + \tfrac{\alpha_\epsilon ^2}{\epsilon^2} \intt  j_\epsilon \sdot   \tilde{j}_\epsilon \bd x  -     \tfrac{\alpha_\epsilon}{\gamma_\epsilon}\intt \curl   B_\epsilon \sdot   \tilde j_\epsilon \bd x\\
 & - \alpha_\epsilon \intt  \intv \tilde v   N_g \bdv \sdot   E_\epsilon \bd x, 
\end{split}
\end{align}
and
\begin{aligno}
\label{este-dis-e-0}
&      \tdt \intt \left( |  E_\epsilon|^2 + |  B_\epsilon|^2  - 2 \alpha_\epsilon \sdot   \tilde j_\epsilon \sdot   E_\epsilon \right)\bd x \\
& \qquad +  \tfrac{3\sigma}{4}\tfrac{\alpha_\epsilon^2}{\epsilon^2} \| E_\epsilon\|_{L^2}^2  - \tfrac{ \sigma}{16}\tfrac{\alpha_\epsilon^2}{\epsilon^2} \|\curl   B_\epsilon\|_{L^2}^2\\
 & \le    C \|(f_\epsilon, g_\epsilon, E_\epsilon, B_\epsilon)\|_{H^s_x}^2\|(f_\epsilon, g_\epsilon)\|_{H^s_\Lambda}^2  + C(1+ \tfrac{\epsilon^2}{\gamma_\epsilon^2}) \|  g_\epsilon^\perp\|_{H^1_x}^2.
\end{aligno}
In summary, we complete the proof.

\end{proof}

The next lemma gives the $L^2$ dissipative estimates of $\curl B$.
\begin{lemma}
\label{lemma-curl-b}
Under the assumption  in the section \ref{sec-assump-on-L} and the assumption \eqref{estMeaninitial} on the initial data, if $(f_\epsilon, g_\epsilon, B_\epsilon, E_\epsilon)$ are solutions to \eqref{vmbtwolinear}, then  for $ k \le s -1$.
\begin{align}
\label{est-curl-b}
-\tfrac{\gamma_\epsilon \alpha_\epsilon^2}{\epsilon^2}  \sdot \frac{\bd }{\bd t} \intt \nabla_x^k E_\epsilon\sdot \curl  \nabla_x^k B_\epsilon \bd x + \tfrac{ 3\alpha_\epsilon^2}{4\epsilon^2} \|\curl \nabla_x^k B_\epsilon\|_{L^2}^2 - \tfrac{ \alpha_\epsilon^2}{\epsilon^2}   \|\curl E_\epsilon\|_{L^2}^2   \le C \tfrac{\beta_\epsilon^2}{\epsilon^2} \| \nabla_x^k g_\epsilon^\perp\|_{L^2}^2.
\end{align}

\end{lemma}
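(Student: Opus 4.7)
The plan is to construct a mixed identity coupling $\nabla_x^k E_\epsilon$ with $\curl \nabla_x^k B_\epsilon$, patterned after the usual energy identities for the Maxwell block of \eqref{vmbtwolinear}, and then calibrate it by the factor $\tfrac{\alpha_\epsilon^2}{\epsilon^2}$ so that the dissipation of $\curl B_\epsilon$ matches the coefficient already present in the statement of Theorem \ref{theoremexi}. The only nonlinear input on the right-hand side is the current $\nabla_x^k j_\epsilon$, which will be absorbed via the microscopic structure of $g_\epsilon$.

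First, I would apply $\nabla_x^k$ to the Amp\`ere equation of \eqref{vmbtwolinear} and test in $L^2_x$ against $\curl \nabla_x^k B_\epsilon$, producing
$\gamma_\epsilon \intt \partial_t \nabla_x^k E_\epsilon \cdot \curl \nabla_x^k B_\epsilon \bd x - \|\curl \nabla_x^k B_\epsilon\|_{L^2}^2 = -\tfrac{\beta_\epsilon}{\epsilon}\intt \nabla_x^k j_\epsilon \cdot \curl \nabla_x^k B_\epsilon \bd x$.
Next, I would move the time derivative out by Leibniz, integrate by parts once in $x$, and invoke Faraday's equation $\gamma_\epsilon \partial_t B_\epsilon = -\curl E_\epsilon$ to rewrite
$\intt \nabla_x^k E_\epsilon \cdot \partial_t \curl \nabla_x^k B_\epsilon \bd x = -\tfrac{1}{\gamma_\epsilon}\|\curl \nabla_x^k E_\epsilon\|_{L^2}^2$.
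Combining these two steps yields the identity
$\gamma_\epsilon \tdt \intt \nabla_x^k E_\epsilon \cdot \curl \nabla_x^k B_\epsilon \bd x + \|\curl \nabla_x^k E_\epsilon\|_{L^2}^2 - \|\curl \nabla_x^k B_\epsilon\|_{L^2}^2 = -\tfrac{\beta_\epsilon}{\epsilon}\intt \nabla_x^k j_\epsilon \cdot \curl \nabla_x^k B_\epsilon \bd x$.

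Third, I would multiply this identity by $-\tfrac{\alpha_\epsilon^2}{\epsilon^2}$, producing exactly the time derivative $-\tfrac{\gamma_\epsilon \alpha_\epsilon^2}{\epsilon^2}\tdt \intt \nabla_x^k E_\epsilon \cdot \curl \nabla_x^k B_\epsilon \bd x$ together with $\tfrac{\alpha_\epsilon^2}{\epsilon^2}\|\curl \nabla_x^k B_\epsilon\|_{L^2}^2$ and $-\tfrac{\alpha_\epsilon^2}{\epsilon^2}\|\curl \nabla_x^k E_\epsilon\|_{L^2}^2$ on the left. The right-hand side becomes $\tfrac{\alpha_\epsilon^2 \beta_\epsilon}{\epsilon^3}\intt \nabla_x^k j_\epsilon \cdot \curl \nabla_x^k B_\epsilon \bd x$. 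Since $\bdp g_\epsilon = n_\epsilon$ is velocity-independent and $\intv v\, \bdv = 0$ by the oddness of $v\m$, one has $j_\epsilon = \intv g_\epsilon^\perp v\, \bdv$, so $\|\nabla_x^k j_\epsilon\|_{L^2} \lesssim \|\nabla_x^k g_\epsilon^\perp\|_{L^2}$. A Young's inequality with a small weight then absorbs $\tfrac{\alpha_\epsilon^2}{4\epsilon^2}\|\curl \nabla_x^k B_\epsilon\|_{L^2}^2$ into the main dissipation and produces a remainder of size $C\tfrac{\alpha_\epsilon^2 \beta_\epsilon^2}{\epsilon^4}\|\nabla_x^k g_\epsilon^\perp\|_{L^2}^2$.

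The final step, and the only one that is scaling-sensitive, is checking that $\tfrac{\alpha_\epsilon^2 \beta_\epsilon^2}{\epsilon^4} \le C\tfrac{\beta_\epsilon^2}{\epsilon^2}$ so that the right-hand side of \eqref{est-curl-b} is met. This reduces to $\tfrac{\alpha_\epsilon^2}{\epsilon^2}\le C$, which holds in all three cases (\ref{relationnsw})--(\ref{relationnsf}) since $\alpha_\epsilon \le \epsilon$. The main obstacle is precisely this bookkeeping: the coefficient $\tfrac{\alpha_\epsilon^2 \beta_\epsilon}{\epsilon^3}$ in front of the current term looks very singular, and it is only the combined use of the constitutive relation \eqref{relationabg} and the scaling $\alpha_\epsilon = O(\epsilon)$ (which is why the same lemma gives a coefficient $\tfrac{\alpha_\epsilon^2}{\epsilon^2}$ rather than a larger one on the dissipation of $\curl B_\epsilon$) that keeps the remainder at the admissible size $\tfrac{\beta_\epsilon^2}{\epsilon^2}$, matching the right-hand side of \eqref{est-curl-b}.
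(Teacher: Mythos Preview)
Your proposal is correct and follows essentially the same approach as the paper: derive the mixed identity for $\intt \nabla_x^k E_\epsilon \cdot \curl \nabla_x^k B_\epsilon \,\bd x$ from the Amp\`ere and Faraday equations, scale by $-\tfrac{\alpha_\epsilon^2}{\epsilon^2}$, and control the current term via $\|\nabla_x^k j_\epsilon\|_{L^2}\lesssim\|\nabla_x^k g_\epsilon^\perp\|_{L^2}$ together with $\tfrac{\alpha_\epsilon}{\epsilon}\le 1$ and Young's inequality. Your derivation is in fact slightly more explicit than the paper's (you spell out the Leibniz step and the use of Faraday's law), and your term $-\tfrac{\alpha_\epsilon^2}{\epsilon^2}\|\curl \nabla_x^k E_\epsilon\|_{L^2}^2$ is the correct one; the $\|\curl E_\epsilon\|_{L^2}^2$ appearing without $\nabla_x^k$ in the statement of \eqref{est-curl-b} is a typo in the paper.
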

\begin{proof}
Applying $\nabla_x^k$ to  the third and forth equations of \eqref{vmbtwolinear}, then multiplying the resulting equation by  $-\curl \nabla_x^k B$  and $-\curl \nabla_x^k E$ respectively, then we can infer that
\begin{align*}
- \tfrac{\gamma_\epsilon \alpha_\epsilon^2}{\epsilon^2}     \sdot \frac{\bd }{\bd t} \intt \nabla_x^k E_\epsilon\sdot \curl \nabla_x^k B_\epsilon\bd x + \tfrac{ \alpha_\epsilon^2}{\epsilon^2} \|\curl \nabla_x^k B_\epsilon\|_{L^2}^2 - \tfrac{  \alpha_\epsilon^2}{\epsilon^2} \|\curl \nabla_x^k E_\epsilon\|_{L^2}^2 = - \tfrac{\beta_\epsilon \alpha_\epsilon^2}{\epsilon^3} \intt \nabla_x^k j_\epsilon \sdot \curl \nabla_x^k B_\epsilon \bd x.
\end{align*}
Recalling that $\repsa \le 1$, then we can infer that
\[ - \tfrac{\beta_\epsilon \alpha_\epsilon^2}{\epsilon^3} \intt \nabla_x^k j_\epsilon \sdot \curl \nabla_x^k B_\epsilon \bd x \le C \tfrac{\beta_\epsilon^2}{\epsilon^2} \| \nabla_x^k g_\epsilon^\perp\|_{L^2}^2 + \tfrac{ \alpha^2_\epsilon }{4 \epsilon^2}\|\curl \nabla_x^k B_\epsilon\|_{L^2}^2.  \]
and
\begin{align*}
-\tfrac{\gamma_\epsilon \alpha_\epsilon^2}{\epsilon^2}  \sdot \frac{\bd }{\bd t} \intt \nabla_x^k E_\epsilon\sdot \curl  \nabla_x^k B_\epsilon \bd x + \tfrac{ 3\alpha_\epsilon^2}{4\epsilon^2} \|\curl \nabla_x^k B_\epsilon\|_{L^2}^2 - \tfrac{ \alpha_\epsilon^2}{\epsilon^2}   \|\curl E_\epsilon\|_{L^2}^2   \le C \tfrac{\beta_\epsilon^2}{\epsilon^2} \| \nabla_x^k g_\epsilon^\perp\|_{L^2}^2,
\end{align*}
where we have used \eqref{est-nabla-u}.
\end{proof}

Combining Lemma \ref{lemma-curl-e}  and Lemma \ref{lemma-curl-b}, we can obtain the following lemma.

\begin{lemma}
\label{lemmacurl-b-e}
Under the assumption  in the section \ref{sec-assump-on-L} and the assumption \eqref{estMeaninitial} on the initial data, if $(f_\epsilon, g_\epsilon, B_\epsilon, E_\epsilon)$ are solutions to \eqref{vmbtwolinear}, then for $k \le s -2$, 
  \begin{align}
  \label{estcurl-b-e}
  \begin{split}
&      \tdt  \sum\limits_{k=0}^{s-2}\intt \left( |\curl \nabla_x^k E_\epsilon|^2 + |\curl \nabla_x^k B_\epsilon|^2  - 2 \alpha_\epsilon \sdot \curl \nabla_x^k \tilde j_\epsilon \sdot  \curl  \nabla_x^k E_\epsilon   -\tfrac{\gamma_\epsilon \alpha_\epsilon^2 \sigma }{4\epsilon^2}  \sdot   \nabla_x^k E_\epsilon\sdot \curl  \nabla_x^k B_\epsilon  \right) \bd x \\
&  +     \tdt \intt \left( |  E_\epsilon|^2 + |  B_\epsilon|^2  - 2 \alpha_\epsilon \sdot   \tilde j_\epsilon \sdot   E_\epsilon \right)\bd x  +  \tfrac{3\sigma}{4}\tfrac{\alpha_\epsilon^2}{\epsilon^2} \| E_\epsilon\|_{L^2}^2  +  \tfrac{ \sigma}{16}\tfrac{\alpha_\epsilon^2}{\epsilon^2} \|(\curl E_\epsilon,\curl   B_\epsilon)\|_{H^{s-2}_x}^2\\
 & \le    C \|(f_\epsilon, g_\epsilon, E_\epsilon, B_\epsilon)\|_{H^s_x}^2\|(f_\epsilon, g_\epsilon)\|_{H^s_\Lambda}^2  + C(1+ \tfrac{\epsilon^2}{\gamma_\epsilon^2}) \|  g_\epsilon^\perp\|_{H^s_x}^2.
  \end{split}
\end{align}
\end{lemma}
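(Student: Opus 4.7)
The plan is to combine the two dissipative bounds already obtained in Lemma \ref{lemma-curl-e} and Lemma \ref{lemma-curl-b} through a simple linear combination, choosing the coefficient in front of the $B_\epsilon$--mixed functional so as to absorb the defect term $-\tfrac{\sigma}{16}\tfrac{\alpha_\epsilon^2}{\epsilon^2}\|\curl\nabla_x^k B_\epsilon\|_{L^2}^2$ appearing on the left-hand side of \eqref{este-dis-e}. First I would sum the inequality \eqref{este-dis-e} of Lemma \ref{lemma-curl-e} over $0 \le k \le s-2$, producing a Gr\"onwall-type identity in which the good term $\tfrac{3\sigma}{4}\tfrac{\alpha_\epsilon^2}{\epsilon^2}\|\curl E_\epsilon\|_{H^{s-2}_x}^2$ appears, together with a ``bad'' term of size $\tfrac{\sigma}{16}\tfrac{\alpha_\epsilon^2}{\epsilon^2}\|\curl B_\epsilon\|_{H^{s-2}_x}^2$ with the wrong sign.

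Next I would multiply the inequality \eqref{est-curl-b} of Lemma \ref{lemma-curl-b} by $\sigma/4$ and sum it over $0 \le k \le s-2$. This yields a coefficient $\tfrac{3\sigma}{16}\tfrac{\alpha_\epsilon^2}{\epsilon^2}\|\curl B_\epsilon\|_{H^{s-2}_x}^2$ on the positive side (which suffices to dominate the bad $\tfrac{\sigma}{16}$ term from the previous step with room left over $\approx \tfrac{\sigma}{16}$) at the price of a term $-\tfrac{\sigma}{4}\tfrac{\alpha_\epsilon^2}{\epsilon^2}\|\curl E_\epsilon\|_{H^{s-2}_x}^2$, which is easily swallowed by the $\tfrac{3\sigma}{4}\tfrac{\alpha_\epsilon^2}{\epsilon^2}\|\curl E_\epsilon\|_{H^{s-2}_x}^2$ bound from the previous step, leaving at least $\tfrac{\sigma}{2}\tfrac{\alpha_\epsilon^2}{\epsilon^2}\|\curl E_\epsilon\|_{H^{s-2}_x}^2$ on the good side. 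The factor $\sigma/4$ is exactly what produces the time-derivative functional $-\tfrac{\gamma_\epsilon\alpha_\epsilon^2\sigma}{4\epsilon^2}\nabla_x^k E_\epsilon\sdot\curl\nabla_x^k B_\epsilon$ appearing in \eqref{estcurl-b-e}. Finally, I would add in the zero-order version \eqref{este-dis-e0} of Lemma \ref{lemma-curl-e} to produce the $\tfrac{3\sigma}{4}\tfrac{\alpha_\epsilon^2}{\epsilon^2}\|E_\epsilon\|_{L^2}^2$ term displayed in the conclusion.

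For the right-hand side, the remaining error terms are a harmless accumulation over $k$ of the nonlinear bound $C\|(f_\epsilon,g_\epsilon,E_\epsilon,B_\epsilon)\|_{H^s_x}^2\|(f_\epsilon,g_\epsilon)\|_{H^s_\Lambda}^2$ and the microscopic bound $C(1+\tfrac{\epsilon^2}{\gamma_\epsilon^2})\|g_\epsilon^\perp\|_{H^s_x}^2$ from Lemma \ref{lemma-curl-e}, together with the lower-order contribution $C\tfrac{\beta_\epsilon^2}{\epsilon^2}\|\nabla_x^k g_\epsilon^\perp\|_{L^2}^2$ from Lemma \ref{lemma-curl-b}. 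Observing that the relation $\alpha_\epsilon\gamma_\epsilon = \epsilon\beta_\epsilon$ gives $\tfrac{\beta_\epsilon^2}{\epsilon^2} = \tfrac{\alpha_\epsilon^2}{\epsilon^2}\sdot\tfrac{\gamma_\epsilon^2}{\epsilon^2}\lesssim 1+\tfrac{\epsilon^2}{\gamma_\epsilon^2}$ in the regimes \eqref{relationnsw}--\eqref{relationnsf} (or can simply be absorbed as a lower-order term of the same structure), this last contribution is of the same type as the second one and can be merged.

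The only real obstacle here is the bookkeeping of the constants: one must verify that the multiplier $\sigma/4$ for Lemma \ref{lemma-curl-b} is small enough to leave $\tfrac{\sigma}{2}\tfrac{\alpha_\epsilon^2}{\epsilon^2}\|\curl E_\epsilon\|_{H^{s-2}_x}^2$ of positive dissipation on $\curl E_\epsilon$, yet large enough to dominate the $\tfrac{\sigma}{16}\tfrac{\alpha_\epsilon^2}{\epsilon^2}\|\curl B_\epsilon\|_{H^{s-2}_x}^2$ defect on $\curl B_\epsilon$. Since the arithmetic is linear and the two bounds have the required opposite signs, a conservative combined lower bound of $\tfrac{\sigma}{16}\tfrac{\alpha_\epsilon^2}{\epsilon^2}\|(\curl E_\epsilon,\curl B_\epsilon)\|_{H^{s-2}_x}^2$ is recovered, giving exactly \eqref{estcurl-b-e}.
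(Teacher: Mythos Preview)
Your approach is exactly the paper's: the paper states Lemma \ref{lemmacurl-b-e} immediately after Lemma \ref{lemma-curl-b} with the one-line justification ``Combining Lemma \ref{lemma-curl-e} and Lemma \ref{lemma-curl-b}, we can obtain the following lemma,'' and your linear combination (sum \eqref{este-dis-e} over $0\le k\le s-2$, add $\tfrac{\sigma}{4}$ times \eqref{est-curl-b} summed over the same range, then add \eqref{este-dis-e0}) carries this out correctly, with the multiplier $\tfrac{\sigma}{4}$ producing precisely the mixed functional $-\tfrac{\gamma_\epsilon\alpha_\epsilon^2\sigma}{4\epsilon^2}\nabla_x^k E_\epsilon\sdot\curl\nabla_x^k B_\epsilon$ displayed in \eqref{estcurl-b-e}. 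The arithmetic on the dissipative side ($\tfrac{3\sigma}{4}-\tfrac{\sigma}{4}=\tfrac{\sigma}{2}$ for $\curl E_\epsilon$ and $\tfrac{3\sigma}{16}-\tfrac{\sigma}{16}=\tfrac{\sigma}{8}$ for $\curl B_\epsilon$, with the extra $-\tfrac{\sigma}{16}$ from \eqref{este-dis-e0} still leaving $\tfrac{\sigma}{16}$) is also right.

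One small correction: your claim that $\tfrac{\beta_\epsilon^2}{\epsilon^2}=\tfrac{\alpha_\epsilon^2\gamma_\epsilon^2}{\epsilon^4}\lesssim 1+\tfrac{\epsilon^2}{\gamma_\epsilon^2}$ fails in the scaling \eqref{relationnsw}, where $\beta_\epsilon=1$ gives left side $\epsilon^{-2}$ but right side $1+\epsilon^2$. Strictly speaking the right-hand side of \eqref{estcurl-b-e} should carry the constant $C(1+\tfrac{\epsilon^2}{\gamma_\epsilon^2}+\tfrac{\beta_\epsilon^2}{\epsilon^2})$; since $\beta_\epsilon\le 1$ in all three regimes this extra piece is still $O(\epsilon^{-2})$ and is absorbed by the microscopic dissipation $\tfrac{1}{\epsilon^2}\|g_\epsilon^\perp\|_{H^s_{\Lambda_x}}^2$ in Lemma \ref{lemmawhole} exactly as the $\tfrac{\epsilon^2}{\gamma_\epsilon^2}$ term is. Your parenthetical remark already anticipates this, so the argument stands.
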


\begin{remark}
\label{remark-curl-e-b}
In this lemma, we obtain the dissipative estimates of $E$ and $B$. Noticing that the  Ampere equation and Faraday equation are hyperbolic equations, there is no obvious dissipative effect. To obtain the dissipation of $E$ and $B$, we need the estimates of $\nabla^2_x g$ to close the inequalities.  This is why that  the   second order derivative of initial data belonging to $L^2$ space is necessary.
\end{remark}

\subsection{The whole estimates}
In this section, we shall close the whole estimates. Before that, we first analyze the eletromagnetic parts.

\begin{lemma}
\label{lemmawhole}
Under the assumption  in the section \ref{sec-assump-on-L} and the assumption \eqref{estMeaninitial} on the initial data, if $(f_\epsilon, g_\epsilon, B_\epsilon, E_\epsilon)$ are solutions to \eqref{vmbtwolinear}, then there exists some small enough constant $c_0$ such that 
\begin{align}
\label{estlemmawhole}
\sup\limits_{ 0 \le s \le t}  {H}_\epsilon^s(t) +  \tfrac{1}{4} \int_0^t \left(   \|(   f_\epsilon,    g_\epsilon)\|_{H^{s}_\Lambda}^2 +  \tfrac{\alpha_\epsilon^2}{\epsilon^2} \|(E_\epsilon, B_\epsilon)\|_{H^{s-1}_{ x}}^2 +  \|(   f_\epsilon^\perp,    g_\epsilon^\perp)\|_{H^{s}_{\Lambda_x}}^2 \right)(s)\bd s \le \tfrac {c_u}{c_l}  {H}_\epsilon^s(0),
\end{align}
where $c_l$ and $c_u$ are positive constants only dependent of the Sobolev embedding constant. 
\end{lemma}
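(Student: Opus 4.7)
The plan is to fuse Lemmas \ref{lemmaonlyx}, \ref{lemmav}, \ref{lemmamacrox} and \ref{lemmacurl-b-e} into a single differential inequality for a weighted functional $\tilde H^s_\epsilon$ that is equivalent to $H^s_\epsilon$, and then close by a bootstrap under the smallness hypothesis $H^s_\epsilon(0)\le c_0$. Concretely, I would set
\[
\tilde H^s_\epsilon(t) := z_1 \|(f_\epsilon,g_\epsilon,E_\epsilon,B_\epsilon)\|_{H^s_x}^2 + z_2\, \epsilon^2\Big(\tfrac{8c_1}{3}\sum_{m=1}^{s-1}\dot H^m_{v,\epsilon}+\dot H^s_{v,\epsilon}\Big) + z_3\, \mathcal{E}_{\text{mix}}(t) + z_4\, \mathcal{E}_{\text{em}}(t),
\]
where $\mathcal{E}_{\text{mix}}$ is the sum of the cross terms $\epsilon\intps\nabla_x\nabla_x^{k-1}f_\epsilon\!\cdot\!\nabla_v\nabla_x^{k-1}f_\epsilon\,\bd v\bd x$ (and the analogue for $g_\epsilon$) appearing on the left of \eqref{estmacrox}, and $\mathcal{E}_{\text{em}}$ is the electromagnetic quantity on the left of \eqref{estcurl-b-e}. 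By Cauchy--Schwarz on these cross terms, plus the bound $|\alpha_\epsilon\intt\curl\tilde j_\epsilon\!\cdot\!\curl E_\epsilon|\lesssim \alpha_\epsilon\|g^\perp_\epsilon\|_{H^2_x}\|\curl E_\epsilon\|_{L^2}$ and $\alpha_\epsilon\le\epsilon\le 1$, the auxiliary pieces are dominated by $\mathcal{E}_1+\mathcal{E}_2$, so for $z_3,z_4$ small enough compared with $z_1,z_2$ there exist $c_l,c_u>0$ (depending only on Sobolev constants) with $c_l H^s_\epsilon\le \tilde H^s_\epsilon\le c_u H^s_\epsilon$.

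Next I would add the four lemmas with weights $z_1,\ldots,z_4$ and use the Helmholtz decomposition $E_\epsilon = \nabla\Delta^{-1}\divg E_\epsilon + \mathbf{P}E_\epsilon$ (together with $\divg B_\epsilon=0$) to recover full Sobolev dissipation of the fields from $\|\divg E_\epsilon\|_{H^{s-1}}$ in \eqref{estmacrox}, $\|\curl E_\epsilon,\curl B_\epsilon\|_{H^{s-2}_x}$ and $\|E_\epsilon\|_{L^2}$ in \eqref{estcurl-b-e}; Poincar\'e's inequality applied to $\bdp f_\epsilon,\,\bdp g_\epsilon,\,B_\epsilon$ combined with the global conservation laws of Lemma \ref{lemmamean} (which confine their spatial means) then lifts the gradient dissipation $\|\nabla_x(f_\epsilon,g_\epsilon)\|_{H^{s-1}}$ from \eqref{estmacrox} to a full $L^2$-dissipation. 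The key absorptions are: (i) the $b_1\alpha_\epsilon^2\|E_\epsilon\|_{H^{s-1}_x}^2$ term on the right of \eqref{estv} is dominated by $\tfrac{\sigma}{16}z_4\tfrac{\alpha_\epsilon^2}{\epsilon^2}\|E_\epsilon\|_{H^{s-1}_x}^2$ from \eqref{estcurl-b-e} once $z_2 b_1\epsilon^2\le \tfrac{\sigma}{16}z_4$, which is fine since $\epsilon\le 1$; (ii) the $b_1\|(f_\epsilon,g_\epsilon)\|_{H^{s-1}_x}^2$ term is absorbed by $\tfrac{3}{4}z_1\|(f_\epsilon,g_\epsilon)\|_{H^s_\Lambda}^2$ for $z_2\ll z_1$; (iii) the $\delta_1\|E_\epsilon\|_{H^{s-1}_x}^2$ and $\delta_2\|\nabla_v(f_\epsilon,g_\epsilon)\|_{H^{s-1}_{\Lambda_x}}^2$ on the right of \eqref{estmacrox} are absorbed by choosing $\delta_1,\delta_2$ small; and (iv) the $\tfrac{C}{\delta\epsilon^2}\|(f^\perp_\epsilon,g^\perp_\epsilon)\|_{H^s_{\Lambda_x}}^2$ term in \eqref{estmacrox} is absorbed by $\tfrac{3}{4\epsilon^2}z_1\|(f^\perp_\epsilon,g^\perp_\epsilon)\|_{H^s_{\Lambda_x}}^2$ from \eqref{estlemmaonlyx} when $z_3\ll z_1$. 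The outcome is
\[
\tdt \tilde H^s_\epsilon + \tfrac14\Big(\|(f_\epsilon,g_\epsilon)\|_{H^s_\Lambda}^2 + \tfrac{\alpha_\epsilon^2}{\epsilon^2}\|(E_\epsilon,B_\epsilon)\|_{H^{s-1}_x}^2 + \tfrac{1}{\epsilon^2}\|(f^\perp_\epsilon,g^\perp_\epsilon)\|_{H^s_{\Lambda_x}}^2\Big) \le C\sqrt{\tilde H^s_\epsilon}\cdot \mathcal{D}(t),
\]
where $\mathcal{D}$ is the dissipation on the left; the nonlinear contribution on the right follows from the cubic bounds already in each lemma. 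A continuity argument then gives $\tilde H^s_\epsilon(t)\le \tilde H^s_\epsilon(0)$ once $C\sqrt{c_u c_0/c_l}<\tfrac{1}{8}$, and integrating in time and using the norm equivalence produces \eqref{estlemmawhole} with prefactor $c_u/c_l$.

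The step I expect to be the main obstacle is choosing the weights $z_1,\ldots,z_4$ \emph{uniformly} in $\epsilon$ and simultaneously across the three scalings \eqref{relationnsw}--\eqref{relationnsf}. The delicate case is \eqref{relationnsf} with $\alpha_\epsilon=\epsilon^2$, where the electromagnetic dissipation $\tfrac{\alpha_\epsilon^2}{\epsilon^2}=\epsilon^2$ is very weak; the coupling term $C(1+\tfrac{\epsilon^2}{\gamma_\epsilon^2})\|g^\perp_\epsilon\|_{H^s_x}^2$ on the right of \eqref{estcurl-b-e} must still be absorbed by $\tfrac{1}{\epsilon^2}z_1\|g^\perp_\epsilon\|_{H^s_{\Lambda_x}}^2$ from \eqref{estlemmaonlyx}, which forces $z_4(1+\tfrac{\epsilon^2}{\gamma_\epsilon^2})\le c\, z_1/\epsilon^2$; this is where the constraint $\alpha_\epsilon\gamma_\epsilon=\epsilon\beta_\epsilon$ of \eqref{relationabg} enters in an essential way, ensuring $\tfrac{\epsilon^2}{\gamma_\epsilon^2}\lesssim \tfrac{1}{\alpha_\epsilon^2}\cdot\epsilon^2\beta_\epsilon^2/\epsilon^2$ remains compatible with the absorption budget. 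One must also cope with the loss of one derivative in the electromagnetic estimates (Remark \ref{remark-curl-e-b}), which is why only $\|E_\epsilon,B_\epsilon\|_{H^{s-1}_x}$ appears on the dissipation side; this forces the $\alpha_\epsilon\curl\intv\tilde v\nabla_x^k N_g\bd v\cdot\curl\nabla_x^k E_\epsilon$ terms to be treated only up to $k\le s-2$, which is exactly consistent with \eqref{estcurl-b-e-no-such-label}. Threading these inequalities together without losing powers of $\epsilon$ in any of the three scalings is the technical heart of the proof.
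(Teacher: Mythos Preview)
Your proposal is essentially the paper's own proof: the paper also builds $\tilde H^s_\epsilon$ as a weighted sum of the four functionals from Lemmas~\ref{lemmaonlyx}, \ref{lemmav}, \ref{lemmamacrox}, \ref{lemmacurl-b-e} (with weights denoted $b_5,\,1,\,b_4,\,b_4$ rather than your $z_1,\ldots,z_4$), recovers full dissipation of $(E_\epsilon,B_\epsilon)$ via Helmholtz and of $(f_\epsilon,g_\epsilon)$ via Poincar\'e together with Lemma~\ref{lemmamean}, and closes by the same bootstrap. Two small points you should make explicit: in your item~(ii) the term $b_1\|(f_\epsilon,g_\epsilon)\|_{H^{s-1}_x}^2$ is actually absorbed by the \emph{macroscopic} dissipation coming from \eqref{estmacrox} after Poincar\'e (weight $z_3$), not by the microscopic one from \eqref{estlemmaonlyx} (weight $z_1$); and since by Lemma~\ref{lemmamean} the means of $u_\epsilon,\theta_\epsilon$ are not zero but $-\gamma_\epsilon\!\intt E_\epsilon\times B_\epsilon\,\bd x$ and $-\tfrac{\epsilon}{3}\|(E_\epsilon,B_\epsilon)\|_{L^2}^2$, Poincar\'e on $\bdp f_\epsilon$ leaves the quartic remainder $(\gamma_\epsilon^2+\epsilon^2)\|(E_\epsilon,B_\epsilon)\|_{H^s}^2\|(E_\epsilon,B_\epsilon)\|_{L^2}^2$, which the paper absorbs into $\tilde H^s_\epsilon\cdot\tfrac{\alpha_\epsilon^2}{\epsilon^2}\|(E_\epsilon,B_\epsilon)\|_{H^{s-1}_x}^2$ using the scaling inequalities $\gamma_\epsilon,\epsilon\le\alpha_\epsilon/\epsilon$ valid in all three regimes \eqref{relationnsw}--\eqref{relationnsf}.
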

\begin{proof}This Lemma can be proved by employing the Poincare's inequality and choosing proper consants.  We split the proof into three steps.

{\bf Step one: obtain the dissipative estimates of eletromagnetic field}

Denoting 
\begin{aligno}
\label{normeletro}
H^{s-1}_{\epsilon,e}(t) &= \sum\limits_{k=0}^{s-2}\intt \left( |\curl \nabla_x^k E_\epsilon|^2 + |\curl \nabla_x^k B_\epsilon|^2  - 2 \alpha_\epsilon \sdot \curl \nabla_x^k \tilde j_\epsilon \sdot  \curl  \nabla_x^k E_\epsilon     \right) \bd x \\
& + \intt \left( |  E_\epsilon|^2 + |  B_\epsilon|^2  - 2 \alpha_\epsilon \sdot   \tilde j_\epsilon \sdot   E_\epsilon \right)\bd x - \sum\limits_{k=0}^{s-2}\intt \left( \tfrac{\gamma_\epsilon \alpha_\epsilon^2 \sigma }{4\epsilon^2}  \sdot   \nabla_x^k E_\epsilon\sdot \curl  \nabla_x^k B_\epsilon  \right) \bd x,
\end{aligno}
and
\begin{aligno}
\label{normmacro}
H^{s}_{d,\epsilon}(t) = \epsilon     \sum\limits_{k=1}^s \intps\left( \nabla_x \nabla_x^{k-1} f_\epsilon \sdot \nabla_v \nabla_x^{k-1} f_\epsilon + \nabla_x \nabla_x^{k-1} g_\epsilon \sdot \nabla_v \nabla_x^{k-1} g_\epsilon  \right) \bdv \bd x,
\end{aligno}
then combining \eqref{estmacrox} and \eqref{estcurl-b-e} up, we can infer that
\begin{aligno}
\label{estmacrox+dbe-0}
& \tdt \bigg( H^{s-1}_{\epsilon,e} + H^{s}_{d,\epsilon}\bigg)(t)  + \tfrac{3}{4}\|(\nabla_x f_\epsilon, \nabla_x g_\epsilon)\|_{H^{s-1}_x}^2          - \delta_2 \|\nabla_v(f_\epsilon, g_\epsilon)\|_{H^{s-1}_{\Lambda_x}}^2 \\
& + \| \divg E_\epsilon\|_{H^{s-1}_x}^2 - \delta_1 \tfrac{\alpha_\epsilon^2}{\epsilon^2} \|E_\epsilon\|_{H^{s-1}_{ x}}^2  +  \tfrac{3\sigma}{4}\tfrac{\alpha_\epsilon^2}{\epsilon^2} \| E_\epsilon\|_{L^2}^2  +  \tfrac{ \sigma}{16}\tfrac{\alpha_\epsilon^2}{\epsilon^2} \|(\curl E_\epsilon,\curl   B_\epsilon)\|_{H^{s-2}_x}^2\\
& \le  C \|(f_\epsilon, g_\epsilon, E_\epsilon, B_\epsilon)\|_{H^s_x}^2\|(f_\epsilon, g_\epsilon)\|_{H^s_\Lambda}^2  + C\bigg(1+ \tfrac{\epsilon^2}{\gamma_\epsilon^2} +  (\tfrac{1}{\delta_1 \epsilon^2  }  + \tfrac{1}{\delta_2  \epsilon^2 }  )    \bigg) \| (f_\epsilon^\perp, g_\epsilon^\perp)\|_{H^s_x}^2.
\end{aligno}
Noticing that  we have obtained the $L^2$ estimates of the curl part and divergence part of $E_\epsilon$,  the estimate of $\nabla E_\epsilon$ can be recovered by Hodge decomposition, that is to say,  
\[    \|\nabla E_\epsilon\|_{L^2}^2 \le C \left( \|\divg E_\epsilon\|_{L^2}^2 +  \|\curl E_\epsilon\|_{L^2}^2 \right).  \]
For the magnetic field $B_\epsilon$,  since $\divg B_\epsilon =0$,
\[    \|\nabla B\|_{L^2}^2 \le C   \|\curl B\|_{L^2}^2.  \]
Furthermore, the forth equation in \eqref{vmbtwolinear},  the mean value of $B$   on the torus are zero. By Poincare's inequality,
\[    \|B_\epsilon\|_{L^2}^2 \le C \|\nabla B_\epsilon\|_{L^2}^2.  \]
So for the electromagnetic field and some $c_4>0$, we can obtain that
\begin{align*}
c_4 \sdot \tfrac{\alpha_\epsilon^2}{\epsilon^2} \|(B_\epsilon, E_\epsilon)\|_{H^{s-1}_x}^2 \le  \| \divg E_\epsilon\|_{H^{s-1}_x}^2    +  \tfrac{3\sigma}{4}\tfrac{\alpha_\epsilon^2}{\epsilon^2} \| E_\epsilon\|_{L^2}^2  +  \tfrac{ \sigma}{16}\tfrac{\alpha_\epsilon^2}{\epsilon^2} \|(\curl E_\epsilon,\curl   B_\epsilon)\|_{H^{s-2}_x}^2.
\end{align*}
Choosing $\delta_1 = \tfrac{c_4}{4}$, then we can infer that
\begin{aligno}
\label{estmacrox+dbe-1}
& \tdt \bigg( H^{s-1}_{\epsilon,e} + H^{s}_{d,\epsilon}\bigg)(t)  + \tfrac{3}{4}\|(\nabla_x f_\epsilon, \nabla_x g_\epsilon)\|_{H^{s-1}_x}^2          - \delta_2 \|\nabla_v(f_\epsilon, g_\epsilon)\|_{H^{s-1}_{\Lambda_x}}^2    + \tfrac{3c_4}{4} \tfrac{\alpha_\epsilon^2}{\epsilon^2} \|(E_\epsilon, B_\epsilon)\|_{H^{s-1}_{ x}}^2   \\
& \le  C \|(f_\epsilon, g_\epsilon, E_\epsilon, B_\epsilon)\|_{H^s_x}^2\|(f_\epsilon, g_\epsilon)\|_{H^s_\Lambda}^2  + C\bigg(1+ \tfrac{\epsilon^2}{\gamma_\epsilon^2}     + \tfrac{1}{\delta_2  \epsilon^2 }       \bigg) \| (f_\epsilon^\perp, g_\epsilon^\perp)\|_{H^s_x}^2.
\end{aligno}

{\bf Step two: obtain the dissipative estimates of the macroscopic part of $f_\epsilon$ and $g_\epsilon$}

By Lemma \ref{lemmamean}, noticing that  $\intt  \bdp g_\epsilon \bd x =0$ for any $t>0$,  thus we can infer that
\begin{align}
\label{estpoing}
\|g_\epsilon\|_{H^s_x}^2 \le C \left( \|\nabla g_\epsilon\|_{H^{s-1}_x}^2 + \|g_\epsilon^\perp\|_{H^s_x}^2\right).
\end{align}
The macroscopic part of $f_\epsilon$ is more complicated. From Lemma \ref{lemmamean},
\begin{align}
\intt \bdp f_\epsilon(t) \bd x = - \gamma_\epsilon  v \sdot \intt E_\epsilon\times B_\epsilon \bd x    - \epsilon \tfrac{|v|^2-3}{6} \|(E_\epsilon, B_\epsilon)\|_{L^2}^2.    
\end{align}
Thus, by Poincare's inequality, we can infer that
\begin{align}
\label{estpoinf}
\|f_\epsilon\|_{H^s_x}^2 \le C \left( \|\nabla f_\epsilon\|_{H^{s-1}_x}^2 + \|f_\epsilon^\perp\|_{H^s_x}^2 + (\gamma_\epsilon^2 + \epsilon^2)\|(E_\epsilon, B_\epsilon)\|_{H^s}^2\|(E_\epsilon, B_\epsilon)\|_{L^2}^2\right).
\end{align}
Together with \eqref{estmacrox+dbe-1}, \eqref{estpoinf} and \eqref{estpoing}, there exists some $c_6>0$ such that
\begin{aligno}
\label{estx+dbe-1}
& \tdt \bigg( H^{s-1}_{\epsilon,e} + H^{s}_{d,\epsilon}\bigg)(t)  + c_6\|(  f_\epsilon,   g_\epsilon)\|_{H^{s}_x}^2          - \delta_2 \|\nabla_v(f_\epsilon, g_\epsilon)\|_{H^{s-1}_{\Lambda_x}}^2    + \tfrac{3c_4}{4} \tfrac{\alpha_\epsilon^2}{\epsilon^2} \|(E_\epsilon, B_\epsilon)\|_{H^{s-1}_{ x}}^2   \\
& \le  C \|(f_\epsilon, g_\epsilon, E_\epsilon, B_\epsilon)\|_{H^s_x}^2\|(f_\epsilon, g_\epsilon)\|_{H^s_\Lambda}^2  + C\bigg(1+ \tfrac{\epsilon^2}{\gamma_\epsilon^2}     + \tfrac{1}{\delta_2  \epsilon^2 }       \bigg) \| (f_\epsilon^\perp, g_\epsilon^\perp)\|_{H^s_x}^2\\
&  + C (\gamma_\epsilon^2 + \epsilon^2)\|(E_\epsilon, B_\epsilon)\|_{H^s}^2\|(E_\epsilon, B_\epsilon)\|_{L^2}^2.
\end{aligno}

{\bf Step three: Closing the entire estimates}

\begin{aligno}
& \epsilon^2 \dt \left( \tfrac{8c_1}{3} \sum\limits_{m=1}^{s-1} \dot{H}_{v,\epsilon}^{m}(t)  + \dot{H}_{v,\epsilon}^{s}(t)\right)  +  \tfrac{3}{4} \|(\nabla_v f_\epsilon, \nabla_v g_\epsilon)\|_{H^{s-1}_\Lambda}^2\\
& \le b_1 \|(f_\epsilon, g_\epsilon)\|_{H^{s-1}_x}^2  +  b_1 \alpha_\epsilon^2\| E_\epsilon\|_{H^{s-1}_x}^2  + C \|(f_\epsilon, g_\epsilon, E_\epsilon, B_\epsilon)\|_{H^s_x}\|(f_\epsilon, g_\epsilon)\|_{H^s_\Lambda}^2 \\
& +  C \sdot \epsilon   \|(\nabla_v f_\epsilon, \nabla_v g_\epsilon)\|_{H^{s-1}}\|(f_\epsilon, g_\epsilon)\|_{H^s_\Lambda}^2,
\end{aligno}
Denoting 
\begin{align}
\label{normv}
 H_{v,\epsilon}^s = \left( \tfrac{8c_1}{3} \sum\limits_{m=1}^{s-1} \dot{H}_{v,\epsilon}^{m}(t)  + \dot{H}_{v,\epsilon}^{s}(t)\right),
\end{align}
  and choosing $b_4$  and $\delta_2$ such that 
\[ b_4 \sdot c_6 \ge b_1 + 1, ~~  \tfrac{3c_4}{4} \sdot b_4 \ge b_1 + \tfrac{1}{2}, ~~b_4\sdot \delta_2 = \tfrac{1}{4}, \]
then we can infer that there exists some $c_8$ such that
\begin{aligno}
\label{estwholelessperpg}
& \tdt \left(  b_4 \sdot H^{s-1}_{\epsilon,e} + b_4 \sdot  H^{s}_{d,\epsilon}  + H_{v,\epsilon}^s   \right) + \tfrac{1}{2} \left( \|(  f_\epsilon,   g_\epsilon)\|_{H^{s}_x}^2           +  \|(  \nabla_v f_\epsilon, \nabla_v  g_\epsilon)\|_{H^{s-1}_\Lambda}^2 +  \tfrac{\alpha_\epsilon^2}{\epsilon^2} \|(E_\epsilon, B_\epsilon)\|_{H^{s-1}_{ x}}^2 \right)  \\
& \le  C \|(f_\epsilon, g_\epsilon, E_\epsilon, B_\epsilon)\|_{H^s_x}^2\|(f_\epsilon, g_\epsilon)\|_{H^s_\Lambda}^2  + c_8\big(1+ \tfrac{\epsilon^2}{\gamma_\epsilon^2}     + \tfrac{1}{\delta_2  \epsilon^2 }       \big) \| (f_\epsilon^\perp, g_\epsilon^\perp)\|_{H^s_x}^2\\
&  + C (\gamma_\epsilon^2 + \epsilon^2)\|(E_\epsilon, B_\epsilon)\|_{H^s}^2\|(E_\epsilon, B_\epsilon)\|_{L^2}^2 +  C \sdot \epsilon   \|(\nabla_v f_\epsilon, \nabla_v g_\epsilon)\|_{H^{s-1}}\|(f_\epsilon, g_\epsilon)\|_{H^s_\Lambda}^2.
\end{aligno}
By the relation \eqref{alphabg}, we can infer that
\[ \gamma_\epsilon \le \tfrac{\alpha_\epsilon}{\epsilon},~~ \epsilon \le \tfrac{\alpha_\epsilon}{\epsilon},~~\tfrac{\epsilon}{\gamma_\epsilon} \le \tfrac{1}{\epsilon}. \]
Denoting
\begin{align}
\label{normtildeH}
\tilde{H}_\epsilon^s := b_5 \|(f_\epsilon, g_\epsilon, E_\epsilon, B_\epsilon)\|_{H^s_x}^2 +  b_4 \sdot H^{s-1}_{\epsilon,e} + b_4 \sdot  H^{s}_{d,\epsilon}  + H_{v,\epsilon}^s,
\end{align}
and choosing $b_5$ such that
\begin{align*}
\tilde{H}_\epsilon^s \approx H_\epsilon^s, ~~\tfrac{b_5}{\epsilon^2} \ge c_8\big(1+ \tfrac{\epsilon^2}{\gamma_\epsilon^2}     + \tfrac{1}{\delta_2  \epsilon^2 }       \big) + \tfrac{1}{2\epsilon^2}, 
\end{align*}
then we have
\begin{aligno}
\label{estwhole-1}
& \tdt \tilde{H}_\epsilon^s + \tfrac{1}{2} \left(   \|(   f_\epsilon,    g_\epsilon)\|_{H^{s}_\Lambda}^2 +  \tfrac{\alpha_\epsilon^2}{\epsilon^2} \|(E_\epsilon, B_\epsilon)\|_{H^{s-1}_{ x}}^2 +  \|(   f_\epsilon^\perp,    g_\epsilon^\perp)\|_{H^{s}_{\Lambda_x}}^2 \right)  \\
& \le  c_9 \tilde{H}_\epsilon^s \left( \|(f_\epsilon, g_\epsilon)\|_{H^s_\Lambda}^2  + \tfrac{\alpha_\epsilon^2}{\epsilon^2} \|(E_\epsilon, B_\epsilon)\|_{H^{s-1}_{ x}}^2 +  \|(   f_\epsilon^\perp,    g_\epsilon^\perp)\|_{H^{s}_{\Lambda_x}}^2\right). 
\end{aligno}
Since $\tilde{H}_\epsilon^s$ is equivalent to $H_\epsilon^s$, there exists some $0<c_l<1$ and $c_u>0$ such that
\begin{align}
\label{estclcu}
c_l \| {H_\epsilon^s} \le \tilde{H}_\epsilon^s \le c_u  H^s_\epsilon.
\end{align}
As long as the initial data satisfy that
\begin{align}
H_\epsilon^s(0) \le c_0:=\tfrac{1}{4 c_u c_9},
\end{align}
then it follow that  for any $ t>0$ 
\begin{aligno}
\sup\limits_{ 0 \le s \le t} \tilde{H}_\epsilon^s(t) +  \tfrac{1}{4} \int_0^t \left(   \|(   f_\epsilon,    g_\epsilon)\|_{H^{s}_\Lambda}^2 +  \tfrac{\alpha_\epsilon^2}{\epsilon^2} \|(E_\epsilon, B_\epsilon)\|_{H^{s-1}_{ x}}^2 +  \repst\|(   f_\epsilon^\perp,    g_\epsilon^\perp)\|_{H^{s}_{\Lambda_x}}^2 \right)(s)\bd s \le \tilde{H}_\epsilon^s(0).
\end{aligno}
By \eqref{estclcu}, we can infer that
\begin{align*}
\sup\limits_{ 0 \le s \le t}  {H}_\epsilon^s(t) +  \tfrac{1}{4} \int_0^t \left(   \|(   f_\epsilon,    g_\epsilon)\|_{H^{s}_\Lambda}^2 +  \tfrac{\alpha_\epsilon^2}{\epsilon^2} \|(E_\epsilon, B_\epsilon)\|_{H^{s-1}_{ x}}^2 + \repst \|(   f_\epsilon^\perp,    g_\epsilon^\perp)\|_{H^{s}_{\Lambda_x}}^2 \right)(s)\bd s \le \tfrac {c_u}{c_l}  {H}_\epsilon^s(0).
\end{align*}
We complete the proof of this lemma. 
\end{proof} 
In what follows, we sketch the idea of constructing approximate solutions and complete the proof of Theorem \ref{theoremexi}.
\begin{align}
\label{vmb-appro}
\begin{cases}
\partial_t f_\epsilon^n +  \reps \vdot f_\epsilon^n  -   \repst \mathcal{L}(f_\epsilon^n)  + \tfrac{\alpha_\epsilon E_\epsilon^{n-1} + \beta_\epsilon v\times B_\epsilon^{n-1}}{\m \epsilon} \sdot \nabla_v (\m g_\epsilon^n) = \reps\Gamma(f_\epsilon^{n-1},f_\epsilon^{n-1}), \\
\partial_t g_\epsilon^n +  \reps \vdot g_\epsilon^n   - \tfrac{\alpha_\epsilon}{\epsilon^2}  E_\epsilon^n \sdot v -   \repst \mathsf{L}(g_\epsilon^n)  +  \tfrac{\alpha_\epsilon E_\epsilon^{n-1}  \beta_\epsilon v\times B_\epsilon^{n-1}}{\m \epsilon} \sdot \nabla_v (\m f_\epsilon^n) = \reps\Gamma(g_\epsilon^{n-1},f_\epsilon^{n-1}), \\
\ges\partial_t E_\epsilon^n - \curl B_\epsilon^n = - \tfrac{\beta_\epsilon}{\epsilon} j_\epsilon^n, \\
\ges  \partial_t B_\epsilon^n + \curl E_\epsilon^n =0,\\
\divg B_\epsilon^n =0,~~\divg E_\epsilon^n = \tfrac{\alpha_\epsilon}{\epsilon} \intv g_\epsilon^n\bdv,
\end{cases}
\end{align}
\begin{align*}
f^0_\epsilon=g^0=0,~~E^0_\epsilon=B^0_\epsilon=0.
\end{align*}
The approximate solutions can be constructed by iteration method and induction method. Then based on the uniform estimates, the solutions can be obtained by employing Rellich-Kondrachov compactness theorem.

\section{Verify the limit}
\label{sec-limit}
In this section, we will complete the proof of Theorem \ref{theoremlimit}  based on the uniform estimates \eqref{estlemmawhole}. 
\begin{proof}[The proof of Theorem \ref{theoremlimit}]
The proof is based on the local conservation laws and uniform estimates. In Sec.\ref{secEstimates}, we have obtain the following prior estimates: for any $t>0$
\begin{align}
\label{estwhole}
\sup\limits_{ 0 \le s \le t}  \|(f_\epsilon,g_\epsilon,B_\epsilon,E_\epsilon)\|_{H^s_x}^2 +   \int_0^t \left(   \|(   f_\epsilon,    g_\epsilon)\|_{H^{s}_\Lambda}^2  +  \repst \|(   f_\epsilon^\perp,    g_\epsilon^\perp)\|_{H^{s}_{\Lambda_x}}^2 \right)(s)\bd s \le C_0.
\end{align}
We split the proof into three steps.  Before that, denoting 
\[ \alpha = \lim\limits_{\epsilon \to 0} \repsa,~~ \beta = \lim\limits_{\epsilon \to 0} \beta_\epsilon,~~\gamma=\lim\limits_{\epsilon \to 0} \gamma_\epsilon.  \]

{\bf Step 1: the strong convergence of $f_\epsilon$ and $g_\epsilon$.}

First, since $\int_0^\infty\|(f_\epsilon,g_\epsilon)\|_{H^s}^2 \bd s \le C_0$, then there exist some $f, g \in L^2((0,\infty);H^s)$ such that
\begin{align}
\label{estconvergencefg}
f_\epsilon \weakc f,~~g_\epsilon \weakc g,~~~~\text{in},~~ L^2\left((0,+\infty); H^{s}\right).
\end{align} 
On the other hand, noticing that there exists a coefficient $\repst$ before the microscopic part in \eqref{estwhole},  then we can infer that
\begin{align}
f^\perp_\epsilon \to 0,~~g^\perp_\epsilon \to 0,~~~~\text{in},~~ L^2\left((0,+\infty); H^s_{ x}\right).
\end{align}   
All together, we can infer 
\[  f_\epsilon \to f,~~g_\epsilon \to g,~~~~\text{in},~~ L^2\left((0,+\infty); H^{s-1}_{x}\right),  \]
and
\[ f \in \mathrm{Ker}\mathcal{L},~~ g \in \mathrm{Ker}\mathsf{L}.  \]
By the properties of the Boltzmann operator, we can infer that there exists some $\rho(t),~u(t),~\theta(t) \in H^s_x$ and $n(t) \in H^s_x$ such that
\begin{align}
\label{estlimitfg}
f(t,x)= \rho(t,x) + u(t,x) \sdot v + \tfrac{|v|^2-3}{2} \theta(t,x),~~g(t,x)=n(t,x).
\end{align}
Recalling the macroscopic parts of $f_\epsilon$ and $g_\epsilon$,  i.e., 
\begin{align*}
\rho_\epsilon = \intv f_\epsilon \bdv,~ u_\epsilon = \intv f_\epsilon v \bdv,~ \theta_\epsilon = \intv h_\epsilon \tfrac{|v|^2 -3}{3} \bdv,~~n_\epsilon = \intv d_\epsilon \bdv.
\end{align*}
By \eqref{estwhole}, we can obtain that
\begin{align}
\label{estLimit3}
\rho_\epsilon(t,x),~~u_\epsilon(t,x),~~\theta_\epsilon(t,x),~~n_\epsilon(t,x) \in L^\infty([0,+\infty); H^s_{x})\cap L^2([0,+\infty); H^s_{x}) . 
\end{align}  
Based on the estimates \eqref{estLimit3}, for any fixed $t>0$, there exist   $\{\rho, ~~u,~~\theta,~~n, B, E\} \subset H^s_x $  such that  for any fixed $T>0$
\begin{align}
\label{estconvergenceMarco}
\rho_\epsilon \to \rho,~~u_\epsilon \to u,~~ \theta_\epsilon \to \theta,~~n_\epsilon \to n,~~B_\epsilon \to B,~~E_\epsilon \to E,~~\text{in},~~ L^2((0,T);H^{s-1}_x).
\end{align}
The next step is to verify $\rho, u, \theta$ and $n$ statisfy the limiting fluid equations.

{\bf Step 2:   the local conservation laws.}

Copying the nonlinear equations \eqref{vmbtwo-rewrite} below,
\begin{align*}
\begin{cases}
\partial_t f_\epsilon +  \reps \vdot f_\epsilon  -   \repst \mathcal{L}(f_\epsilon)  = -  \tfrac{\alpha_\epsilon E_\epsilon + \beta_\epsilon v\times B_\epsilon}{\m \epsilon} \sdot \nabla_v (\m g_\epsilon) + \reps\Gamma(f_\epsilon,f_\epsilon), \\
\partial_t g_\epsilon +  \reps \vdot g_\epsilon   - \tfrac{\alpha_\epsilon}{\epsilon^2}  E_\epsilon \sdot v -   \repst \mathsf{L}(g_\epsilon)  =  -  \tfrac{\alpha_\epsilon E_\epsilon + \beta_\epsilon v\times B_\epsilon}{\m \epsilon} \sdot \nabla_v (\m f_\epsilon) + \reps\Gamma(g_\epsilon,f_\epsilon), \\
\ges\partial_t E_\epsilon - \curl B_\epsilon = - \tfrac{\beta_\epsilon}{\epsilon} j_\epsilon, \\
\ges  \partial_t B_\epsilon + \curl E_\epsilon =0,\\
\divg B_\epsilon =0,~~\divg E_\epsilon = \tfrac{\alpha_\epsilon}{\epsilon} \intv g_\epsilon\bdv.
\end{cases}
\end{align*}
from the above system, the local conservation laws, i.e., the equations of $\rho_\epsilon, u_\epsilon, \theta_\epsilon$ and $n_\epsilon$, are 
\begin{align}
\label{fluid-app}
\begin{cases}
\partial_t \rho_\epsilon + \tfrac{1}{\epsilon} \divg u_\epsilon =0,\\
\partial_t u_\epsilon + \reps \divg \intv \hat{A} \mathcal{L}f_\epsilon \bdv + \reps \nabla_x (\rho_\epsilon + \theta_\epsilon )  = \repsa n_\epsilon \sdot E_\epsilon + \repsb j_\epsilon \timess B_\epsilon,\\
\partial_t \theta_\epsilon + \tfrac{2}{3\epsilon} \divg \intv \hat{B} \mathcal{L}f_\epsilon \bdv + \tfrac{2}{3\epsilon} \divg u_\epsilon  = \tfrac{2}{3} j_\epsilon \cdot E_\epsilon,\\
\partial_t n_\epsilon + \reps \divg j_\epsilon =0,\\
\ges\partial_t E_\epsilon - \curl B_\epsilon = - \tfrac{\beta_\epsilon}{\epsilon} j_\epsilon, \\
\ges  \partial_t B_\epsilon + \curl E_\epsilon =0,\\
\divg B_\epsilon =0,~~\divg E_\epsilon = \tfrac{\alpha_\epsilon}{\epsilon} n_\epsilon.
\end{cases}
\end{align}
where
\begin{aligno}
A(v) = v \otimes v - \tfrac{|v|^2}{3}\mathbb{I},~~B(v) = v ( \tfrac{|v|^2}{2} - \tfrac{5}{2}), ~\mathcal{L}\hat{A}(v) = A(v),~~\mathcal{L}\hat{B}(v) = B(v).
\end{aligno}
By the local conservation laws of mass, i.e.,  the first equation of \eqref{fluid-app},  we can infer that in the distributional sense 
\begin{align}
\label{estConvergenceDivgu}
\divg u_\epsilon \to \divg u  =0.
\end{align}
Before verifying the the velocity and temperature equation, we need to understand the limiting behavior of $j_\epsilon$. 

{\bf Step 3, the limiting behavior of $ \reps j_\epsilon$.}

From \eqref{equationJtilde}, we can infer that
\begin{align*}
\reps j_\epsilon =  -\epsilon \partial_t \tilde{j}_\epsilon\ -  \divg  \intv \tilde{v}\otimes v g_\epsilon  \bdv  +  \sigma \tfrac{\alpha_\epsilon}{\epsilon} E_\epsilon + \epsilon   \intv \tilde v  N_g \bdv .
\end{align*}
By the uniform estimates \eqref{estwhole}, for any $t>0$, 
\begin{align}
\label{estlimitj}
\int_0^t \|  ( j_\epsilon, \tilde{j}_\epsilon)(s)\|_{L^2}^2 \bd s \le  C_0 \epsilon^2, 
\end{align}
and
\begin{align}
 \divg  \intv \tilde{v}\otimes v g_\epsilon  \bdv = \divg  \intv \tilde{v}\otimes v g_\epsilon^\perp  \bdv +\sigma\nabla_x n_\epsilon= \sigma\nabla_x n_\epsilon + R_1(\epsilon).
\end{align}
where 
\[ R_1(\epsilon) \to 0,~~\text{in},~~ L^2((0,+\infty);H^{s-1}_x).  \]
For the last term in the right hand of $\reps j_\epsilon$,  by simple computation and decomposition, we can infer that
\begin{aligno}
\label{estlimitje}
\epsilon   \intv \tilde v  N_g \bdv & =    - \intv \bigg(\left(\alpha_\epsilon E_\epsilon + \beta_\epsilon v\times B_\epsilon\right)  \sdot \nabla_v (\m f_\epsilon)\bigg) \tilde{v} \bd v +  \intv  \Gamma(g_\epsilon,f_\epsilon)\tilde{v}\bdv\\
& =- \beta_\epsilon \intv  \left(   v\times B_\epsilon   \sdot  \nabla_v f_\epsilon  \right)  \tilde{v} \bdv + n_\epsilon \intv \Gamma(1,\bdp f_\epsilon) \tilde{v}\bdv + R_2(\epsilon),
\end{aligno}
where 
\[ R_2(\epsilon) =  \alpha_\epsilon\intv   \left( E_\epsilon    \sdot \nabla_v (\m f_\epsilon) \right) \tilde{v} \bd v  +   \intv  \Gamma(g_\epsilon^\perp,f_\epsilon)\tilde{v}\bdv +  \intv  \Gamma(g_\epsilon,f_\epsilon^\perp)\tilde{v}\bdv.  \]
According to \eqref{estconvergencefg}, \eqref{estconvergenceMarco} and \eqref{estlimitfg},  we can infer that in the distributional sence:
\begin{align}
\intv  \left(   v\times B_\epsilon   \sdot  \nabla_v f_\epsilon  \right)  \tilde{v} \bdv \to \sigma B\timess u.
\end{align}
Noticing the fact  that $ \bdp f_\epsilon = \rho_\epsilon + u_\epsilon\sdot v + \tfrac{|v|^2-3}{3} \theta_\epsilon$ and $\bdp f_\epsilon \in \mathrm{Ker}\mathcal{L}$, then we have
\begin{align*}
\mathcal{L}(u_\epsilon\sdot v + \tfrac{|v|^2-3}{3} \theta_\epsilon) = \Gamma(u_\epsilon\sdot v + \tfrac{|v|^2-3}{3} \theta_\epsilon,1) + \Gamma(1,u_\epsilon\sdot v + \tfrac{|v|^2-3}{3} \theta_\epsilon)=0.
\end{align*}
Then, for the rest one in the second line of \eqref{estlimitje}, we can deduce that
\begin{aligno}
\label{estnu}
n_\epsilon \intv \tilde{v} \Gamma(  1,  \bdp f_\epsilon) \bdv & = n_\epsilon \intv \tilde{v} \Gamma(  1,  u_\epsilon\sdot v + \tfrac{|v|^2-3}{3} \theta_\epsilon) \bdv \\
& = - n_\epsilon \intv \tilde{v} \Gamma(u_\epsilon\sdot v + \tfrac{|v|^2-3}{3} \theta_\epsilon, 1) \bdv \\
& = n_\epsilon \intv \tilde{v} \mathsf{L}(u_\epsilon\sdot v + \tfrac{|v|^2}{3} \theta_\epsilon) \bdv \\
& =  n_\epsilon \intv  {v} (u_\epsilon\sdot v + \tfrac{|v|^2-3}{3} \theta_\epsilon ) \bdv \\
& = n_\epsilon u_\epsilon.
\end{aligno}
All together, we can infer that in the distributional sense:
\begin{align}
\epsilon   \intv \tilde v  N_g \bdv    \to  \beta  u\timess B + n u.
\end{align}
and
\begin{align}
\label{estConvergeje}
\reps j_\epsilon \to j= \sigma (  \alpha E  + \beta u\timess B) -  \sigma\nabla_x n + n u.
\end{align}

{\bf Step 4, the convergence of conservation laws of momentum and temperature}

From \eqref{estConvergeje}, we can infer that in the distributional sense:
\begin{align}
\label{limitj}
j_\epsilon \to 0,~~\reps j_\epsilon \to j= \tfrac{\sigma}{3} (  \alpha E  + \beta u\timess B) -  \tfrac{\sigma}{3}\nabla_x n + n u. 
\end{align}

Furthermore, for   velocity and temperature equation of \eqref{fluid-app}, we can infer that 
\begin{aligno}
\label{nsp-app}
\partial_t u_\epsilon + \reps \divg \intv \hat{A} \mathcal{L}f_\epsilon \bdv + \reps \nabla_x (\rho_\epsilon + \theta_\epsilon ) & =  \repsa n_\epsilon \sdot E_\epsilon + \repsb j_\epsilon \timess B_\epsilon, \\
\partial_t \left(\tfrac{3}{5}\theta_\epsilon - \tfrac{2}{5}\rho_\epsilon \right) + \tfrac{2}{5\epsilon} \divg \intv \hat{B} \mathcal{L}f_\epsilon \bdv   & = \tfrac{2}{5} j_\epsilon \sdot E_\epsilon. 
\end{aligno}
{Based on \eqref{estwhole} and \eqref{estlimitj},    it follows that}  
\[ \nabla_x (\rho_\epsilon + \theta_\epsilon  ) \to 0, ~~\text{ in the distribution sense, } \]
and
\begin{align}
\label{estConvergencerhotheta}
  \rho + \theta  = 0. 
\end{align}
The  \eqref{estConvergenceDivgu} and \eqref{estConvergencerhotheta} are Bousinessq relations. Now, we can deduce the velocity equations of the macroscopic equations. Denoting by $\mathbf{P}$  the Leray projection operator on torus, from the local conservation laws \eqref{fluid-app},  it follows that 
\begin{aligno}
\label{nsp-app-4}
\partial_t \mathbf{P} u_\epsilon + \reps \mathbf{P}\left( \divg \intv \hat{A} \mathcal{L}f_\epsilon \bdv \right)   & = \mathbf{P} \left( \repsa n_\epsilon E_\epsilon + \repsb j_\epsilon\timess B_\epsilon \right). 
\end{aligno}
Based on the first equation of \eqref{vmbtwo-rewrite}, it implies
\begin{align*}
     \reps \mathcal{L}(f_\epsilon)     & = -\vdot f_\epsilon - \epsilon \partial_t f_\epsilon +  \Gamma(f_\epsilon, f_\epsilon) +   \alpha_\epsilon  E_\epsilon\sdot v \sdot g_\epsilon -  ( \alpha_\epsilon E_\epsilon + \beta_\epsilon v \times B_\epsilon) \sdot \nabla_v g_\epsilon    \\
     & =    {\Gamma}(f_\epsilon,f_\epsilon) - \vdot   f_\epsilon  + R_{\mathcal{L}}(\epsilon).
\end{align*}
By simple calculation (see \cite{diogosrm-2019-vmb-fluid,bgl1993convergence,bgl1991formal}),  we can infer that 
\begin{align}
\label{aaa}
  \intv \hat{A} \sdot \reps \mathcal{L}h_\epsilon \bdv = u_\epsilon \otimes u_\epsilon - \tfrac{|u_\epsilon|^2}{3} \mathbf{I} - \mu \left(\nabla_x u_\epsilon + \nabla^T_\epsilon u_\epsilon -\tfrac{2}{3} \divg u_\epsilon \mathbf{I}\right)- R_f(\epsilon)
\end{align}
with
\[R_f(\epsilon) :=  \intv \hat A \sdot \left( R_{\mathcal{L}}(\epsilon) -  \vdot f^\perp_\epsilon +  {\Gamma}(f_\epsilon^\perp,f_\epsilon) + {\Gamma}(f_\epsilon,f_\epsilon^\perp) \right) \bdv,  \]
and
\[   \mu = \tfrac{1}{15} \sum\limits_{ 1 \le i \le 3 \atop 1 \le j \le 3}\intv A_{ij}\hat{A}_{ij}\bdv. \]
According to \eqref{estwhole},  by the weak convergence results \eqref{estconvergencefg} and the properties   \eqref{estlimitfg} of $f$ and $g$, it holds in the distributional sense that 
\[ R_f(\epsilon) \to 0. \]
Based on \eqref{estwhole},  \eqref{estConvergeje},  \eqref{estLimit3} and \eqref{nsp-app-4},  we can obtain that
\begin{align}
\partial_t \mathbf{P} \mathbf{u}_\epsilon  \in H^{s-1}_{x}.
\end{align}
Employing  Aubin-Lions-Simon theorem (see \cite{tools-ns}), we can infer 
\begin{align}
\label{strongu}
\mathbf{P} \mathbf{u}_\epsilon  \in C((0,+\infty;H^{s-1}_{x});~~ \mathbf{P} \mathbf{u}_\epsilon \to    \mathbf{u},~~  \text{in}~~ C((0,+\infty;H^{s-1}_{x}). 
\end{align}

Thus, according to \eqref{estwhole}, \eqref{estLimit3} and \eqref{aaa},  we can infer in the distributional sense that
\begin{aligno}
& \mathbf{P} u_\epsilon \to u,~~\reps \mathbf{P}\left( \divg \intv \hat{A} \mathcal{L}h_\epsilon \bdv \right)  \to u \sdot \nabla u - \mu \Delta u.
\end{aligno}
Combining the strong convergence properties \eqref{estconvergenceMarco}, we can finally deduce that
\begin{align}
\label{fluidlimitu}
\partial_t u + u\sdot\nabla u - \mu\Delta u + \nabla P = \alpha n \sdot E + \beta \sdot j \timess B.
\end{align}
{Similar to \eqref{aaa}, we can infer that
\begin{align}
\label{bbb}
  \tfrac{2}{5}   \intv \hat{B} \sdot \reps \mathcal{L}g_\epsilon \bdv = u_\epsilon\sdot \theta_\epsilon - \kappa \nabla \theta_\epsilon -  R_\theta(\epsilon)
\end{align}
with
\[R_\theta(\epsilon) :=  \tfrac{5}{2}\intv \hat B \sdot \left( R_\mathcal{L}(\epsilon) -  \vdot f^\perp_\epsilon +  {\Gamma}(f_\epsilon^\perp,f_\epsilon) + {\Gamma}(f_\epsilon,f_\epsilon^\perp) \right) \bdv,~~\kappa = \tfrac{2}{15} \sum\limits_{ 1 \le i \le 3  }\intv B_{i}\hat{B}_{i}\bdv. \]}
Finally, the temperature equation becomes
\begin{align}
\label{fluid-tem}
\partial_t \left(\tfrac{3}{5}\theta_\epsilon - \tfrac{2}{5}\rho_\epsilon \right)  + \divg(u_\epsilon \theta)- \kappa \Delta \theta_\epsilon   & = \tfrac{2}{5} j_\epsilon \sdot E_\epsilon + \divg R_\theta(\epsilon).
\end{align}
According to \eqref{estconvergenceMarco}, \eqref{estConvergencerhotheta} and \eqref{limitj},   by the similar analysis to obtaining \eqref{fluidlimitu},  we can infer that
\begin{align}
\partial_t \theta + u \sdot\nabla \theta - \kappa \Delta \theta =0.
\end{align}
In the light of all the previous analysis in the section, it follows that we have verified that 
\begin{align}
\label{limit-g}
f_\epsilon(t,x,v) \weakc \rho(t,x) + u(t,x) \sdot v + \tfrac{|v|^2-3}{2} \theta(t,x), ~~ g_\epsilon(t,x,v) \weakc n(t,x), ~~~~\text{in},~~ L^2\left((0,+\infty); H^{s}\right).
\end{align} 
with $\rho,~u,~\theta \in  L^\infty([0,+\infty);H^s_{x})$  and satisfying 
\begin{align}
\label{nsfp-c}
\begin{cases}
\partial_t u + u\sdot \nabla u - \nu \Delta u + \nabla P = \alpha n \sdot E + \beta \sdot j \timess B,\\
\partial_t \theta + u \sdot\nabla \theta - \kappa \Delta \theta =0,\\
\divg u = 0,~~ \rho + \theta =0,\\
j= \sigma (  \alpha E  + \beta u\timess B) -  \sigma\nabla_x n + n u
\end{cases}
\end{align}

{\bf Step 5, the electromagnetic system}

For the Ampere's equation and Faraday's equation in \eqref{fluid-app}, according to the uniform estimates \eqref{estwhole} and \eqref{limitj}, we can infer that in the distributional sense:
\begin{align}
\label{electro}
\begin{cases}
\gamma \partial_t E - \curl B = - \beta j,\\
\gamma \partial_t B + \curl E =0,\\
\divg B =0, ~~\divg E = \alpha n.
\end{cases}
\end{align}
Furthermore, from the forth equation of \eqref{fluid-app}, we can infer that
\begin{align}
\label{equationn}
\partial_t n + \divg j =0.
\end{align}

{\bf  Step 6, The final limiting fluid equation}

Combining \eqref{nsfp-c} and \eqref{electro},  the limiting system is
\begin{align}
\label{nsfwpf}
\begin{cases}
\partial_t u + u\sdot \nabla u - \nu \Delta u + \nabla P = \alpha n \sdot E + \beta \sdot j \timess B,\\
\partial_t \theta + u \sdot\nabla \theta - \kappa \Delta \theta =0,\\
\divg u = 0,~~ \rho + \theta =0,\\
j= \sigma (  \alpha E  + \beta u\timess B) - \sigma\nabla_x n + n u,\\
\gamma \partial_t E - \curl B = - \beta j,\\
\gamma \partial_t B + \curl E =0,\\
\divg B =0, ~~\divg E = \alpha n.
\end{cases}
\end{align}

Then for the limiting fluid equations, we only need to plug the value of $\alpha,~\beta,~\gamma$ into \eqref{nsfwpf}.

For \eqref{relationnsw}, we have
\begin{align}
\alpha = \beta =\gamma =1,
\end{align}
and
\begin{align*}
\begin{cases}
\partial_t u + u\sdot \nabla u - \nu \Delta u + \nabla P =   n \sdot E +     j \timess B,\\
\partial_t \theta + u \sdot\nabla \theta - \kappa \Delta \theta =0,\\
\divg u = 0,~~ \rho + \theta =0,\\
\partial_t E - \curl B = -   j,\\
\partial_t B + \curl E =0,\\
\divg B =0, ~~\divg E =  n,\\
j= \sigma (   E  +  u\timess B) -  \sigma\nabla_x n + n u
\end{cases}
\end{align*}

As for \eqref{relationnsp}, it follows that
\begin{align}
\alpha =1,  \beta =\gamma =0,
\end{align}
and
\begin{align*}
\begin{cases}
\partial_t u + u\sdot \nabla u - \nu \Delta u + \nabla P =   n \sdot E,\\
\partial_t \theta + u \sdot\nabla \theta - \kappa \Delta \theta =0,\\
\partial_t n + \divg(nu) -  {\sigma} \Delta n +  {\sigma} n =0,\\
\divg u = 0,~~ \rho + \theta =0,\\
\curl E =0, ~~\divg E =  n,
\end{cases}
\end{align*}
where we have used \eqref{equationn}.

With respect to \eqref{relationnsp}, it follows that
\begin{align}
\alpha =\beta =\gamma =0,
\end{align}
and two fluids Navier-Stokes-Fourier system, 
\begin{align*}
\begin{cases}
\partial_t u + u\sdot \nabla u - \nu \Delta u + \nabla P = 0,\\
\partial_t \theta + u \sdot\nabla \theta - \kappa \Delta \theta =0,\\
\partial_t n + u\sdot\nabla n - \sigma\Delta n  =0,\\
\divg u = 0,~~ \rho + \theta =0.
\end{cases}
\end{align*}

\end{proof}

%\bibliographystyle{jbst}
%\bibliography{uq,be,limits,books}
\end{document}